\documentclass[reqno,10pt,centertags]{amsart}
\usepackage[letterpaper,margin=1.5in,bottom=1.3in]{geometry}
\usepackage{amsmath,amsthm,amscd,amssymb,latexsym,enumerate}

\usepackage{hyperref}

\newcommand*{\mailto}[1]{\href{mailto:#1}{\nolinkurl{#1}}}


\makeatletter
\def\theequation{\@arabic\c@equation}


\newcommand{\bb}[1]{{\mathbb{#1}}}

\newcommand{\bbN}{{\mathbb{N}}}
\newcommand{\bbR}{{\mathbb{R}}}

\newcommand{\bbC}{{\mathbb{C}}}

\newcommand{\cB}{{\mathcal B}}

\newcommand{\cH}{{\mathcal H}}

\newcommand{\cS}{{\mathcal S}}

\newcommand{\cW}{{\mathcal W}}

\newcommand{\no}{\nonumber}
\newcommand{\lb}{\label}
\newcommand{\bi}{\bibitem}
\newcommand{\f}{\frac}

\newcommand{\ol}{\overline}
\newcommand{\bs}{\backslash}

\newcommand{\wti}{\widetilde}
\newcommand{\wha}{\widehat}
\newcommand{\dott}{\,\cdot\,}
\newcommand{\la}{\lambda}
\newcommand{\al}{\alpha}
\newcommand{\be}{\beta}
\newcommand{\ga}{\gamma}

\newcommand{\si}{\sigma}
\newcommand{\te}{\theta}

\newcommand{\ran}{\operatorname{ran}}
\newcommand{\dom}{\operatorname{dom}}

\renewcommand{\Im}{\operatorname{Im}}

\newcommand{\loc}{\text{\rm{loc}}}

\newcommand{\ess}{\text{\rm{ess}}}


\numberwithin{equation}{section}

\newtheorem{theorem}{Theorem}[section]
\newtheorem{lemma}[theorem]{Lemma}
\newtheorem{corollary}[theorem]{Corollary}
\newtheorem{hypothesis}[theorem]{Hypothesis}

\theoremstyle{definition}

\newtheorem{remark}[theorem]{Remark}

\begin{document}

\title[Renormalized Oscillation Theory]{Renormalized Oscillation Theory for \\
Hamiltonian Systems}

\author[F.\ Gesztesy]{Fritz Gesztesy}
\address{Department of Mathematics,
Baylor University, One Bear Place \#97328,
Waco, TX 76798-7328, USA}
\email{\mailto{Fritz\_Gesztesy@baylor.edu}}
\urladdr{\url{http://www.baylor.edu/math/index.php?id=935340}}

\author[M.\ Zinchenko]{Maxim Zinchenko}
\address{Department of Mathematics and Statistics,
University of New Mexico, Albuquerque, NM 87131, USA}
\email{\mailto{maxim@math.unm.edu}}
\urladdr{\url{http://www.math.unm.edu/~maxim/}}

\dedicatory{Dedicated with admiration to Barry Simon, mentor and friend, on the occasion of his 70th birthday.}
\date{\today}
\thanks{M.Z. is supported in part by a Simons Foundation grant CGM--281971.}
\thanks{Appeared in {\it Advances Math.} {\bf 311}, 569--597 (2017).}
\subjclass[2010]{Primary: 34B24, 34C10; Secondary: 34L15, 34L05.}
\keywords{Matrix oscillation theory, Hamiltonian systems, Sturm--Liouville and Dirac-type operators, eigenvalue counting in essential spectral gaps.}

\begin{abstract}
We extend a result on renormalized oscillation theory, originally derived for Sturm--Liouville and Dirac-type operators on arbitrary intervals in the context of scalar coefficients, to the case of general Hamiltonian systems with block matrix coefficients. In particular, this contains the cases of general Sturm--Liouville and Dirac-type operators with block matrix-valued coefficients as special cases.

The principal feature of these renormalized oscillation theory results consists in the fact that by replacing solutions by appropriate Wronskians of solutions, oscillation theory now applies to intervals in essential spectral gaps where traditional oscillation theory typically fails.
\end{abstract}

\maketitle


\section{Introduction} \lb{s1}

To set the stage for this paper we briefly recall the essentials of traditional Sturm oscillation theory in the simple, special (yet, representative) case of Dirichlet Schr\"odinger operators on a bounded interval $(a,b)$ and a half-line $(a,\infty)$ in terms of zeros of appropriate solutions, and then turn to renormalized oscillation theory in terms of Wronskians of certain solutions due to \cite{GST96a} before describing the principal new results of this paper obtained for general Hamiltonian systems with block matrix coefficients.

Assuming $a \in \bbR$, suppose that
\begin{equation}
V \in L^1_{\loc}((a,\infty)) \, \text{ is real-valued,}    \lb{1.1}
\end{equation}
and (to avoid having to deal with boundary conditions at infinity in the half-line case) that
\begin{equation}
\text{the differential expression $\tau = -\f{d^2}{dx^2} + V(x)$ is in the limit point case at $\infty$.}  \lb{1.2}
\end{equation}
We introduce the Dirichlet operators $H_{a,b}^D$ in $L^2((a,b))$, $a, b \in \bbR$, $a < b$, and
$H_a^D$ in $L^2((a,\infty))$ via
\begin{align}
& \big(H_{a,b}^D f\big)(x) = - f''(x) + V(x) f(x),    \no \\
& f \in \dom\big(H_{a,b}^D\big) = \big\{g \in L^2((a,b)) \, \big| \, g \in AC([a,b]); \, g(a)=0=g(b); \lb{1.3} \\
& \hspace*{6.4cm} (- g'' + V g) \in L^2((a,b))\big\},    \no
\end{align}
and
\begin{align}
& \big(H_a^D f\big)(x) = - f''(x) + V(x) f(x),    \no \\
& f \in \dom\big(H_a^D\big) = \big\{g \in L^2((a,\infty)) \, \big| \, g \in AC_{\loc}([a,\infty)); \, g(a)=0;
\lb{1.4} \\
& \hspace*{5.85cm} (- g'' + V g) \in L^2((a,\infty))\big\}.    \no
\end{align}
In addition, denote by $P\big((\la_0,\la_1);H_{a,b}^D\big)$ the strongly right-continuous spectral projection of $H_{a,b}^D$ corresponding to the open interval $(\la_0,\la_1) \subset \bbR$, and analogously for $H_a^D$.

Next, let $\lambda \in \bbR$ and $\psi_- (\lambda,\dott)$ be a nontrivial solution of $\tau \psi(\lambda,\dott) = \lambda \psi(\lambda,\dott)$ satisfying the Dirichlet boundary condition at the left endpoint $a$, that is,
\begin{equation}
\psi_-(\lambda, a) = 0.    \lb{1.5}
\end{equation}
(Without loss of generality one can assume that $\psi_- (\lambda,\dott)$ is real-valued.)
We denote by $N_{(c,d)}(\psi_-(\lambda,\dott))$ the number of zeros (necessarily simple) of
$\psi_-(\lambda,\dott)$ in the interval $(c,d) \subseteq (a,b)$.

Then the classical Sturm oscillation theorem associated with $H_{a,b}^D$, $H_a^D$ (cf.\ the discussions in \cite{GST96a}, \cite{Si05}) can be stated as follows:

\begin{theorem} \lb{t1.1}
Assume \eqref{1.1} and \eqref{1.2}, and let $\lambda_0 \in \bbR$. Then,
\begin{equation}
\dim\bigl(\ran\bigl(P\big((- \infty,\lambda_0);H_{a,b}^D\big)\big)\big)
= N_{(a,b)}(\psi_-(\lambda_0,\dott)),       \lb{1.6}
\end{equation}
and
\begin{equation}
\dim\bigl(\ran\bigl(P\big((- \infty,\lambda_0);H_a^D\big)\big)\big)
= N_{(a,\infty)}(\psi_-(\lambda_0,\dott)).        \lb{1.7}
\end{equation}
\end{theorem}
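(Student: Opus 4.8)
The plan is to reduce the half-line statement \eqref{1.7} to the bounded-interval statement \eqref{1.6} by an exhaustion argument, and to establish \eqref{1.6} itself by the classical Prüfer-angle / monotonicity method. First I would fix notation: for $\lambda \in \bbR$ let $\psi_-(\lambda,\dott)$ be the real solution of $\tau\psi = \lambda\psi$ with $\psi_-(\lambda,a)=0$, $\psi_-'(\lambda,a)=1$, and introduce the Prüfer variables $\psi_-(\lambda,x) = \rho(\lambda,x)\sin\theta(\lambda,x)$, $\psi_-'(\lambda,x) = \rho(\lambda,x)\cos\theta(\lambda,x)$ with $\theta(\lambda,a)=0$. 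Then zeros of $\psi_-(\lambda,\dott)$ in $(a,b)$ correspond to values $x$ where $\theta(\lambda,x) \in \pi\bbZ_{>0}$, so $N_{(a,b)}(\psi_-(\lambda_0,\dott)) = \#\{k \in \bbN : k\pi < \theta(\lambda_0,b^-)\} = \lceil \theta(\lambda_0,b)/\pi \rceil - 1$ (with the convention that a zero exactly at $b$ is not counted, consistent with $(a,b)$ being open and the Dirichlet condition at $b$). The key analytic input is the standard Sturm comparison / oscillation fact that $\theta(\lambda,x)$ is strictly increasing in $\lambda$ for each fixed $x > a$ (because $\partial_\lambda \theta = \rho^{-2}\int_a^x \psi_-(\lambda,\dott)^2\,dt > 0$ along solutions vanishing at $a$), and continuous and nondecreasing through multiples of $\pi$ in $x$.

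The main step is then the eigenvalue count for $H_{a,b}^D$. An eigenvalue of $H_{a,b}^D$ at energy $E$ is exactly a value of $E$ for which $\psi_-(E,b)=0$, i.e.\ $\theta(E,b) \in \pi\bbN$; by the strict monotonicity of $\theta(\cdot,b)$ these energies are simple and form an increasing sequence $E_1 < E_2 < \cdots$ with $\theta(E_n,b) = n\pi$. Since $\dim\ran P((-\infty,\lambda_0);H_{a,b}^D)$ counts the $E_n$ with $E_n < \lambda_0$, and $E_n < \lambda_0 \iff n\pi = \theta(E_n,b) < \theta(\lambda_0,b)$ (using $\theta(\cdot,b)$ strictly increasing), that dimension equals $\#\{n \in \bbN : n\pi < \theta(\lambda_0,b)\}$, which is precisely $N_{(a,b)}(\psi_-(\lambda_0,\dott))$ as computed above. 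I would also need the elementary fact that $H_{a,b}^D$ is self-adjoint with purely discrete spectrum bounded below (regular Sturm--Liouville problem), so the spectral projection makes sense and has finite-dimensional range; this follows from \eqref{1.1} on the bounded interval by standard regular ODE theory.

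For the half-line identity \eqref{1.7}, I would exhaust $(a,\infty)$ by bounded intervals $(a,b_n)$ with $b_n \uparrow \infty$. On the operator side, the limit-point assumption \eqref{1.2} guarantees $H_a^D$ is self-adjoint, and a strong-resolvent (or form) approximation argument gives $\dim\ran P((-\infty,\lambda_0);H_a^D) = \lim_{n\to\infty}\dim\ran P((-\infty,\lambda_0);H_{a,b_n}^D)$ — monotonicity of the Dirichlet-bracketed operators makes the left side the supremum of the right, and one must check that no eigenvalue mass is lost or gained at the endpoint, which is where \eqref{1.2} is used. On the zero-counting side, $N_{(a,\infty)}(\psi_-(\lambda_0,\dott)) = \lim_{n\to\infty} N_{(a,b_n)}(\psi_-(\lambda_0,\dott))$ trivially by monotone convergence of the count. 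Combining with \eqref{1.6} applied on each $(a,b_n)$ yields \eqref{1.7}. The step I expect to be the main obstacle is the last one: justifying that the eigenvalue count passes to the limit \emph{exactly} (not just up to $\pm 1$), i.e.\ controlling the boundary behavior at $b_n$ and at $\infty$ so that an eigenvalue of $H_a^D$ strictly below $\lambda_0$ is, for $n$ large, an eigenvalue of $H_{a,b_n}^D$ strictly below $\lambda_0$ and conversely — this requires the limit-point hypothesis and a careful continuity-in-$b$ argument for the Prüfer angle together with strong resolvent convergence, rather than any single clean identity.
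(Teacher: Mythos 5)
Your sketch is essentially sound, but note that the paper itself contains no proof of Theorem \ref{t1.1}: it is stated as classical background and attributed to the literature (the discussion cites \cite{GST96a} and \cite{Si05}, and the surrounding facts \eqref{1.8}--\eqref{1.9} are explicitly said to be ``proved in \cite{GST96a}''). So there is no internal argument to compare against; what you have written is the standard Pr\"ufer-angle proof for the regular problem followed by Dirichlet exhaustion for the half line, which is precisely the route taken in those references. Two remarks on completeness. First, in the bounded-interval step, the identification of the $n$-th eigenvalue $E_n$ with the solution of $\theta(E_n,b)=n\pi$ requires not only strict monotonicity of $\theta(\cdot,b)$ (your Wronskian formula $\partial_\lambda\theta=\rho^{-2}\int_a^x\psi_-(\lambda,t)^2\,dt$ is correct) but also the anchoring fact that $\theta(\lambda,b)\downarrow 0$ as $\lambda\downarrow-\infty$ --- equivalently, that $\psi_-(\lambda,\cdot)$ is zero-free on $(a,b]$ for $\lambda$ sufficiently negative. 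Without this, some multiples of $\pi$ below $\theta(\lambda_0,b)$ might fail to be attained and the count would be off by an additive constant; it follows from semiboundedness of the form (which you do invoke), but it deserves explicit mention since it is the only place where the correspondence between eigenvalue index and winding number is pinned down. Second, you have correctly isolated the genuinely delicate step, namely that $\dim\bigl(\ran\bigl(P\big((-\infty,\lambda_0);H_{a,b_n}^D\big)\big)\big)$ converges to the half-line quantity: the two halves are Dirichlet bracketing (decoupling at $b_n$ raises the form, giving ``$\leq$'' for each $n$) and lower semicontinuity of spectral projections onto open sets under strong resolvent convergence (giving ``$\geq$'' in the limit), with the limit-point hypothesis \eqref{1.2} ensuring self-adjointness of $H_a^D$ and the convergence itself. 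This mirrors how the half-line statement is actually established in \cite{GST96a}, and is also structurally the same approximation-by-restriction device the present paper uses for its new matrix-valued results (Theorems \ref{t3.6} and \ref{t3.9}).
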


Given the incredible amount of literature on aspects of classical oscillation theory for Sturm--Liouville operators, it is impossible to attempt a fair account of the corresponding literature, so we just refer to
a few of the standard books on the subject such as, \cite[Ch.~8]{CL85}, \cite[Sect.~XIII.7]{DS88},
\cite[Ch.~XI]{Ha82}, \cite[Ch.~8]{Hi69}, \cite[Ch.~X]{In56}, \cite[Ch.~1]{Kr73}, \cite[Sect.~1.3]{LS75},
\cite[Ch.~II--IV]{Re80}, \cite[Ch.~2]{Sw68}, \cite[Sects.~13, 14]{We87}.

In the half-line case \eqref{1.7}, if $\lambda_0 > \inf \sigma_{\ess}\big(H_a^D\big)$, then $\tau$ is
oscillatory at $\lambda_0$ near $\infty$ (i.e., every real-valued solution $u$ of $\tau u = \lambda_0 u$
has infinitely many zeros in $(a,\infty)$ accumulating at $\infty$) and either side in \eqref{1.7}
equals $\infty$. For $\lambda_j \in \bbR$, $j=0,1$, $\lambda_0 < \lambda_1$, with $\tau$ being nonoscillatory at $\lambda_1$ near $a$ (i.e., every real-valued solution $u$ of $\tau u = \lambda_1 u$ has finitely many zeros in $(a,c)$ for every $c \in (a,\infty)$), and nonoscillatory near $\infty$ (i.e., every real-valued solution $u$ of $\tau u = \lambda_1 u$ has finitely many zeros in $(c,\infty)$ for every $c \in (a,\infty)$), then,
\begin{equation}
\dim\bigl(\ran\bigl(P\big([\lambda_0,\lambda_1);H_a^D\big)\big)\big)
= \lim_{c \uparrow \infty} [N_{(a,c)}(\psi_-(\lambda_1,\dott)) -
N_{(a,c)}(\psi_-(\lambda_0,\dott))].    \lb{1.8}
\end{equation}
Similarly, if $\tau$ is nonoscillatory at $\lambda_1$ near $a$ and oscillatory at $\lambda_1$ near
$\infty$, then
\begin{equation}
\dim\bigl(\ran\bigl(P\big((\lambda_0,\lambda_1);H_a^D\big)\big)\big)
= \liminf_{c \uparrow \infty} [N_{(a,c)}(\psi_-(\lambda_1,\dott)) -
N_{(a,c)}(\psi_-(\lambda_0,\dott))].   \lb{1.9}
\end{equation}
These facts are proved in \cite{GST96a}, they represent slight extensions of results of Hartman \cite{Ha49} and motivate the notion of renormalized oscillation theory in the context where
$\lambda_0 > \inf \sigma_{\ess}\big(H_a^D\big)$.

A novel approach to oscillation theory, especially efficient if
$\lambda_0 > \inf \sigma_{\ess}\big(H_a^D\big)$, replacing solutions $\psi_-(\lambda,\dott)$ by appropriate Wronskians of solutions, was introduced in 1996 in \cite{GST96a} (motivated by results in
\cite{FIT87}, \cite{GST96}, and \cite{Le52}). To describe this
result we suppose that $\psi_+(\lambda,\dott)$, $\lambda \in \bbR$, is either a nontrivial
real-valued solution of $\tau \psi(\lambda,\dott) = \lambda \psi(\lambda,\dott)$ satisfying the Dirichlet boundary condition at the right endpoint $b$, that is,
\begin{equation}
\psi_+(\lambda, b) = 0,    \lb{1.10}
\end{equation}
or else, in the half-line case $(a,\infty)$, we consider the Weyl--Titchmarsh solution $\psi_+(z,\dott)$ of $\tau \psi(z,\dott) = z \psi(z,\dott)$, $z \in \bbR\backslash \sigma_{\ess}\big(H_a^D\big)$ uniquely defined up to constant multiples (generally depending on $z$) in such a manner that we assume without loss of generality that $\psi_+(\dott, x)$ is analytic on $\bbC \backslash \sigma\big(H_a^D\big)$, and, upon removing poles, also analytic in a neighborhood of the discrete spectrum of $H_a^D$. In addition, we suppose
that $\psi_+(\lambda,\dott)$ is real-valued for $\lambda \in \bbR \backslash \sigma_{\ess}\big(H_a^D\big)$.

Given $\psi_-(\lambda,\dott)$ and $\psi_+(\mu,\dott)$,
$\lambda, \mu \in \bbR \backslash \sigma_{\ess}\big(H_a^D\big)$, we introduce their Wronskian by
\begin{equation}
W(\psi_-(\lambda,\dott), \psi_+(\mu,\dott))(x) =
\psi_-(\lambda, x) \psi_+'(\mu, x) -
\psi_-'(\lambda, x) \psi_+(\mu, x), \quad x \in [a, \infty),     \lb{1.11}
\end{equation}
and denote by $N_{(c,d)}(W(\psi_-(\lambda,\dott), \psi_+(\mu,\dott)))$ the number of zeros
({\bf not} counting multiplicity) of
$W(\psi_-(\lambda,\dott), \psi_+(\mu,\dott))(\cdot)$ either in the interval $(c,d) \subseteq (a,b)$ if $b \in \bbR$, or in the interval $(c,d) \subseteq (a,\infty)$.

One of the principal results obtained in \cite{GST96a} then can be stated as follows:

\begin{theorem} \lb{t1.2}
Assume \eqref{1.1} and \eqref{1.2}, and let $\lambda_0, \lambda_1 \in \bbR$,
$\lambda_0 < \lambda_1$. Then,
\begin{equation}
\dim\bigl(\ran\bigl(P\big((\lambda_0,\lambda_1);H_{a,b}^D\big)\big)\big)
= N_{(a,b)}(W(\psi_-(\lambda_0,\dott), \psi_+(\lambda_1,\dott))),       \lb{1.12}
\end{equation}
and
\begin{equation}
\dim\bigl(\ran\bigl(P\big((\lambda_0,\lambda_1);H_a^D\big)\big)\big)
= N_{(a,\infty)}(W(\psi_-(\lambda_0,\dott), \psi_+(\lambda_1,\dott))).        \lb{1.13}
\end{equation}
\end{theorem}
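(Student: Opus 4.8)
The argument I would use is the scalar prototype of the Maslov‑index/relative‑rotation philosophy that the rest of the paper develops for Hamiltonian systems. The plan is to reduce both sides of \eqref{1.12} (and of \eqref{1.13}) to the number of eigenvalues of $H_{a,b}^D$ (resp.\ $H_a^D$) in the open interval $(\lambda_0,\lambda_1)$ and then match that count with the Wronskian zero count by a rotation argument. That the left‑hand sides equal the eigenvalue counts is the spectral theorem together with simplicity of Dirichlet Sturm--Liouville eigenvalues; in the half‑line case, if $(\lambda_0,\lambda_1)\cap\sigma_{\ess}(H_a^D)\neq\varnothing$ both sides are $+\infty$ and only the infinitude of the Wronskian's zero set must be checked. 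For the zero count, the starting observation is that $W(\psi_-(\lambda_0,\dott),\psi_+(\lambda_1,\dott))(x)$ is the determinant of the matrix with columns $(\psi_-(\lambda_0,x),\psi_-'(\lambda_0,x))$ and $(\psi_+(\lambda_1,x),\psi_+'(\lambda_1,x))$, so its zeros in $(a,b)$ are exactly the points where these two solution vectors are linearly dependent. Writing them in Pr\"ufer form $\rho_\pm(x)(\sin\theta_\pm(x),\cos\theta_\pm(x))$ with $\rho_\pm>0$ and $\theta_\pm$ continuous (legitimate since $\psi_-(\lambda_0,\dott)$ and $\psi_+(\lambda_1,\dott)$ are nontrivial), the zero set in $(a,b)$ becomes $\{x\in(a,b):\theta_-(x)-\theta_+(x)\in\pi\bbZ\}$, and counting these points is precisely the zero count "not counting multiplicity".

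The main step — and, I expect, the main obstacle — is to show that $\Delta:=\theta_--\theta_+$ crosses every level of the grid $\pi\bbZ$ strictly and always in the same direction as $x$ increases; this is where $\lambda_0<\lambda_1$ is used. From the Pr\"ufer/Riccati equation $\theta'=\cos^2\theta+(\lambda-V)\sin^2\theta$ one finds that at any $x$ with $\Delta(x)\in\pi\bbZ$ one has $\sin^2\theta_-(x)=\sin^2\theta_+(x)$ and $\cos^2\theta_-(x)=\cos^2\theta_+(x)$, hence $\Delta'(x)=(\lambda_0-\lambda_1)\sin^2\theta_+(x)\le 0$, with equality only when $\psi_+(\lambda_1,x)=0$, which forces $\psi_-(\lambda_0,x)=0$ as well. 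At such a common zero $x_0$ the degeneracy $\Delta'(x_0)=0$ must be resolved by a short local expansion: using $\theta'(x_0)=1$ and $\theta''(x_0)=0$ at a zero of a solution one gets $\theta(x)=\theta(x_0)+(x-x_0)+\tfrac13(\lambda-V(x_0)-1)(x-x_0)^3+O((x-x_0)^4)$, hence $\Delta(x)=\Delta(x_0)+\tfrac13(\lambda_0-\lambda_1)(x-x_0)^3+O((x-x_0)^4)$, so $\Delta$ still decreases strictly through $\Delta(x_0)$, in the same direction as in the transversal case. Thus $\Delta$ meets each level of $\pi\bbZ$ at most once, from above, and $N_{(a,b)}(W)$ equals the number of integers $k$ with $\Delta(b)<k\pi<\Delta(a)$; here I first assume $\lambda_0,\lambda_1\notin\sigma(H_{a,b}^D)$, so that $\Delta(a),\Delta(b)\notin\pi\bbZ$, and dispose of the two possibly exceptional values afterwards by shrinking $(\lambda_0,\lambda_1)$ slightly and invoking continuity of the solutions in the spectral parameter.

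It remains to compute $\Delta$ at the endpoints, where classical Sturm theory (Theorem~\ref{t1.1}) re‑enters. Normalizing $\theta_-(a)=0$ (possible since $\psi_-(\lambda_0,a)=0$), Theorem~\ref{t1.1} gives that $\theta_-$ performs exactly $n_0:=N_{(a,b)}(\psi_-(\lambda_0,\dott))=\dim\ran P((-\infty,\lambda_0);H_{a,b}^D)$ upward crossings of $\pi\bbZ$ in $(a,b)$, so $\theta_-(b)\in(n_0\pi,(n_0+1)\pi)$; applying Theorem~\ref{t1.1} to the reflected problem (same spectrum), $\psi_+(\lambda_1,\dott)$ has $n_1:=\dim\ran P((-\infty,\lambda_1);H_{a,b}^D)$ zeros in $(a,b)$, so if $\theta_+(a)\in(m\pi,(m+1)\pi)$ then $\theta_+(b)=(m+n_1+1)\pi$. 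Substituting $\Delta(a)=-\theta_+(a)$ and $\Delta(b)=\theta_-(b)-(m+n_1+1)\pi$, the count from the previous paragraph collapses to $N_{(a,b)}(W)=n_1-n_0=\dim\ran P((\lambda_0,\lambda_1);H_{a,b}^D)$ (using $n_1\ge n_0$ since $\lambda_0<\lambda_1$), which is \eqref{1.12}.

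Finally, \eqref{1.13} follows by exhaustion $b\uparrow\infty$. With $\psi_+^{(b)}(\lambda_1,\dott)$ the solution obeying the Dirichlet condition at $b$, the limit point hypothesis \eqref{1.2} and the analyticity normalization of the Weyl--Titchmarsh solution give that $\psi_+^{(b)}(\lambda_1,\dott)$ converges, up to $b$‑dependent normalization, to $\psi_+(\lambda_1,\dott)$ locally uniformly together with its derivative as $b\uparrow\infty$, for $\lambda_1\notin\sigma_{\ess}(H_a^D)$; at the same time the Dirichlet eigenvalues of $H_{a,b}^D$ in $(\lambda_0,\lambda_1)$ increase monotonically and exhaust those of $H_a^D$. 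Combining this with \eqref{1.12} for the truncated problems, and using the non‑oscillation criteria recalled after Theorem~\ref{t1.1} (so that $W(\psi_-(\lambda_0,\dott),\psi_+(\lambda_1,\dott))$ has no zeros on $(c,\infty)$ for large $c$ when $(\lambda_0,\lambda_1)\cap\sigma_{\ess}(H_a^D)=\varnothing$, and infinitely many zeros otherwise), both sides pass to the limit and \eqref{1.13} follows. I expect the genuinely delicate points to be exactly two: the third‑order resolution of the degeneracy at common zeros in the monotonicity step, and this last passage to the limit, where one must rule out zeros of the Wronskian being created or lost at $+\infty$ — precisely the effect the non‑oscillation hypotheses and the $L^2$‑decay of $\psi_+(\lambda_1,\dott)$ are there to control.
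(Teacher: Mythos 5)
The paper itself does not prove Theorem~\ref{t1.2}: it is quoted from \cite{GST96a}, and the new machinery of Sections~\ref{s2}--\ref{s3} (spectral mapping to the restricted resolvent, truncated operators $T_{a,c}$ with the boundary condition at $c$ built from the Pr\"ufer frame of the right Weyl solution, monotone continuity of eigenvalues in $c$, and a min--max lower bound via spliced trial functions) is a genuinely different, operator-theoretic route that reproves and generalizes it. Your argument for the compact-interval identity \eqref{1.12} is essentially the relative Pr\"ufer-angle proof of \cite{GST96a} and is sound in outline: the one-directional passage of $\Delta=\theta_--\theta_+$ through $\pi\bbZ$ plus the endpoint bookkeeping from Theorem~\ref{t1.1} does give $N_{(a,b)}(W)=n_1-n_0$. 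Two local repairs are needed, though. Since $V\in L^1_{\loc}$ only, you may not write $V(x_0)$ or $\theta''(x_0)$ in the cubic expansion at a common zero; the correct version is the integrated Pr\"ufer identity $\Delta(x)-\Delta(x_0)=\int_{x_0}^{x}\bigl[(\la_0-\la_1)\sin^2\theta_+ +(\la_0-V-1)(\sin^2\theta_--\sin^2\theta_+)\bigr]\,dt=\tfrac{1}{3}(\la_0-\la_1)(x-x_0)^3(1+\oh(1))$, which still yields strict decrease. And the disposal of the exceptional cases $\la_0$ or $\la_1\in\si\big(H_{a,b}^D\big)$ by ``shrinking the interval and invoking continuity'' is not automatic: the Wronskian zero count is integer-valued and can jump as a zero enters or leaves through $a$ or $b$ when the spectral parameter crosses an eigenvalue, so these cases must be argued directly.

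The genuine gap is the half-line statement \eqref{1.13}. ``Both sides pass to the limit'' as $b\uparrow\infty$ is precisely the assertion that has to be proved, and locally uniform convergence of $\psi_+^{(b)}(\la_1,\dott)$ to $\psi_+(\la_1,\dott)$ only controls Wronskian zeros on fixed compact sets; it does not exclude zeros escaping to, or emerging from, $+\infty$, nor does it by itself show that the eigenvalue counts of $H_{a,b}^D$ in $(\la_0,\la_1)$ converge to that of $H_a^D$ without overshooting. This is exactly the point at which both \cite{GST96a} and the present paper abandon the limit-of-counts picture in favor of operator-theoretic tools: the upper bound $\dim\bigl(\ran\bigl(P\big((\la_0,\la_1);H_a^D\big)\big)\big)\le N_{(a,\infty)}(W)$ rests on strong resolvent convergence of the truncations together with the $\liminf$ inequality for spectral projections (\cite[Lemma~5.2]{GST96a}; compare \eqref{3.42}), and the lower bound rests on the variational argument with trial functions spliced from $\psi_-(\la_0,\dott)$ and $\psi_+(\la_1,\dott)$ at the zeros of the Wronskian, whose mutual orthogonality is the scalar antecedent of Lemmas~\ref{l3.7}--\ref{l3.8} and Theorem~\ref{t3.9}. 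Finally, note that Theorem~\ref{t1.2} is asserted for \emph{all} $\la_0<\la_1$; the claim that both sides of \eqref{1.13} are infinite when $(\la_0,\la_1)\cap\si_\ess\big(H_a^D\big)\neq\emptyset$ is itself a theorem requiring proof (oscillation of the Wronskian), so as written your argument covers only the case $(\la_0,\la_1)\cap\si_\ess\big(H_a^D\big)=\emptyset$ with $\la_0,\la_1\notin\si\big(H_a^D\big)$.
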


We emphasize that Theorem \ref{t1.2} applies, especially to situations where
$(\lambda_0, \lambda_1)$ lies in an essential spectral gap of $H_a^D$,
$(\lambda_0, \lambda_1) \subset \bbR \backslash \sigma_{\ess}\big(H_a^D\big)$,
$\lambda_0 > \inf \sigma_{\ess}\big(H_a^D\big)$, a case in which both, $\psi_-(\lambda_0,\dott)$ and $\psi_+(\lambda_1,\dott)$ have infinitely many zeros on $[0,\infty)$.

Reference \cite{GST96a} also contains results with $\psi_+(\lambda_1,\dott)$ replaced by
$\psi_-(\lambda_1,\dott)$, and other extensions, particularly, to self-adjoint, separated
boundary conditions, but we omit further details here. In addition, extensions of Theorem \ref{t1.2},
as well as the treatment of Dirac-type operators and that of the finite difference case of Jacobi operators
appeared in \cite{AT09}, \cite{KT08}, \cite{KT08a}, \cite{KT09}, \cite{ST10}, \cite{Te95}--\cite{Te98},
\cite[Ch.~4]{Te00}.

Although only indirectly related to \eqref{1.12}, we here mention the results obtained in \cite{GG91} connecting the sign changes of the modified Fredholm determinant of a certain Hilbert--Schmidt operator with a semi-separable integral kernel depending on an energy parameter $\lambda_0 \in \bbR$ and the number of eigenvalues of a Sturm--Liouville operator less than $\lambda_0$ on a compact interval with separated boundary conditions. This can be viewed as a continuous analog of the Jacobi--Sturm rule counting the negative eigenvalues of a self-adjoint matrix.

Next, we turn to the principal topic of this paper, extensions of these oscillation theory results to
the case of matrix-valued coefficients $V$. Assuming $m \in \bbN$, we replace condition \eqref{1.1}
now by
\begin{equation}
V \in L^1_{\loc}((a,\infty))^{m \times m}, \, \text{ $V(x)$ is self-adjoint for a.e.~$x \in (a,\infty)$,}
\lb{1.14}
\end{equation}
still supposing that
\begin{equation}
\text{the differential expression $\tau = -\f{d^2}{dx^2}I_m + V(x)$ is in the limit point case at $\infty$.} \lb{1.15}
\end{equation}
Assuming that
$\Psi_-(\lambda,\dott) \in \bbC^{m \times m}$ is a fundamental matrix of solutions of
$\tau \Psi(\lambda,\dott) = \lambda \Psi(\lambda,\dott)$, $\lambda \in \bbR$, satisfying the Dirichlet boundary condition at the left endpoint $a$,
\begin{equation}
\Psi_-(\lambda, a) = 0,   \lb{1.15a}
\end{equation}
and defining $H_{a,b}^D$ and $H_a^D$ in analogy to \eqref{1.3} and \eqref{1.4}, we now denote,
\begin{equation}
N_{(c,d)}(\Psi_-(\lambda,\dott)) := \sum_{x \in (c,d)} \dim(\ker(\Psi_-(\lambda,x))),   \lb{1.16}
\end{equation}
for $(c,d) \subseteq (a,\infty)$. The analog of Theorem \ref{t1.1} in the present matrix context, as derived
in \cite{RH77}, \cite[Ch.~1]{RK05}, then reads as follows:

\begin{theorem} \lb{t1.3}
Assume \eqref{1.14} and \eqref{1.15}, and let $\lambda_0 \in \bbR$. Then,
\begin{equation}
\dim\bigl(\ran\bigl(P\big((- \infty,\lambda_0);H_{a,b}^D\big)\big)\big)
= N_{(a,b)}(\Psi_-(\lambda_0,\dott)),       \lb{1.17}
\end{equation}
and if $\lambda_0 \leq \inf\big(\si_{\rm ess}\big(H_a^D\big)\big)$,
\begin{align}
\begin{split}
\dim\bigl(\ran\bigl(P\big((- \infty,\lambda_0);H_a^D\big)\big)\big) &=
\lim_{b \uparrow \infty}
\dim\bigl(\ran\bigl(P\big((-\infty,\lambda_0);H_{a,b}^D\big)\big)\big)   \\
&= N_{(a, \infty)}(\Psi_-(\lambda_0,\dott)).        \lb{1.18}
\end{split}
\end{align}
\end{theorem}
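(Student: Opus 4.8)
The plan is to deduce both assertions from the finite-interval identity \eqref{1.17}, which we obtain by following the Dirichlet eigenvalues of $H_{a,c}^D$ as the right endpoint $c$ increases from $a$ to $b$. Normalize $\Psi_-(\lambda,\dott)$ by $\Psi_-'(\lambda,a) = I_m$, so that every $\bbC^m$-valued solution of $\tau f = \lambda f$ with $f(a)=0$ has the form $f(\dott) = \Psi_-(\lambda,\dott)v$, $v \in \bbC^m$. For $c \in (a,b]$ write $\lambda_1(c) \le \lambda_2(c) \le \cdots$ for the eigenvalues of $H_{a,c}^D$, repeated according to multiplicity; this is legitimate since $(a,c)$ is bounded and $V \in L^1((a,c))^{m\times m}$ is self-adjoint and form-compact relative to the Dirichlet Laplacian, so $H_{a,c}^D$ is bounded from below with purely discrete spectrum. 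Zero-extension realizes the form domain of $H_{a,c}^D$ as a subspace of that of $H_{a,c'}^D$ for $c<c'$, the two forms agreeing thereon, so by the min-max principle each $\lambda_n(\dott)$ is nonincreasing on $(a,b]$; unique continuation for $\tau\Psi = \lambda\Psi$ upgrades this to strict monotonicity, continuity of $\lambda_n(\dott)$ follows from the continuous dependence of solutions of the associated first-order system on the endpoint, and comparison with the Dirichlet Laplacian on $(a,c)$ together with the absolute continuity of $x \mapsto \int_a^x \norm{V(t)}\,dt$ gives $\lambda_n(c) \to +\infty$ as $c \downarrow a$.

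Next, by the normalization a solution $\Psi_-(\lambda_0,\dott)v$ lies in $\dom(H_{a,c}^D)$ exactly when $\Psi_-(\lambda_0,c)v = 0$; hence $\lambda_0$ is an eigenvalue of $H_{a,c}^D$ iff $\ker(\Psi_-(\lambda_0,c)) \neq \{0\}$, and its multiplicity equals $\dim(\ker(\Psi_-(\lambda_0,c)))$. By strict monotonicity each branch $\lambda_n(\dott)$ attains the value $\lambda_0$ at most once on $(a,b]$, and does so at some point of $(a,b)$ precisely when $\lambda_n(b) < \lambda_0$ (using $\lambda_n(c) \to +\infty$ as $c \downarrow a$). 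Summing multiplicities over these crossings,
\begin{align*}
N_{(a,b)}(\Psi_-(\lambda_0,\dott)) &= \sum_{c \in (a,b)} \dim(\ker(\Psi_-(\lambda_0,c))) \\
&= \#\{n \in \bbN \,:\, \lambda_n(b) < \lambda_0\}
= \dim\bigl(\ran\bigl(P\big((-\infty,\lambda_0);H_{a,b}^D\big)\big)\big),
\end{align*}
which is \eqref{1.17}; note that a zero of $\Psi_-(\lambda_0,\dott)$ occurring at the endpoint $c=b$ is counted on neither side.

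For the half-line statement, the inequality $\lambda_n(H_{a,b}^D) \ge \lambda_n(H_{a,b'}^D)$ for $b<b'$ (again by zero-extension and min-max) shows $b \mapsto \dim(\ran(P((-\infty,\lambda_0);H_{a,b}^D)))$ is nondecreasing, and the same comparison bounds it above by $\dim(\ran(P((-\infty,\lambda_0);H_a^D)))$, which is finite — or a supremum of finite ranks — precisely because $\lambda_0 \le \inf(\si_{\ess}(H_a^D))$ makes the spectrum of $H_a^D$ below $\lambda_0$ purely discrete. For the matching lower bound, fix $N \le \dim(\ran(P((-\infty,\lambda_0);H_a^D)))$ and choose orthonormal eigenfunctions $\phi_1,\dots,\phi_N \in \dom(H_a^D)$ with eigenvalues $< \lambda_0 - \eps$ for some $\eps>0$; truncating and smoothing them against a cutoff supported in $(a,b)$ produces, for $b$ large enough, functions in the form domain of $H_{a,b}^D$ spanning an $N$-dimensional space on which the quadratic form of $H_{a,b}^D - \lambda_0$ is negative definite, so min-max gives $\dim(\ran(P((-\infty,\lambda_0);H_{a,b}^D))) \ge N$ for all large $b$. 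Letting $b \uparrow \infty$, combining with \eqref{1.17}, and using the monotone convergence $N_{(a,b)}(\Psi_-(\lambda_0,\dott)) \uparrow N_{(a,\infty)}(\Psi_-(\lambda_0,\dott))$ yields \eqref{1.18}.

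\textbf{Main obstacle.} The delicate point is the step identifying the count: one must know that the algebraic multiplicity of $\lambda_0$ as an eigenvalue of $H_{a,c}^D$ coincides with the geometric quantity $\dim(\ker(\Psi_-(\lambda_0,c)))$ — which is exactly how the counting function \eqref{1.16} is built — and, what is genuinely new in the block matrix setting, that several eigenvalue branches may pass through the level $\lambda_0$ at the \emph{same} endpoint $c$. Controlling the latter hinges on the strict monotonicity of the branches, i.e.\ on a unique-continuation (or crossing-form) argument that fixes the direction in which each branch moves; everything else — the min-max comparisons, continuity of ODE solutions in parameters, and the truncation argument on the half-line — is routine.
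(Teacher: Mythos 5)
First, a remark on the comparison: the paper does not actually prove Theorem \ref{t1.3} --- it is quoted from \cite{RH77} and \cite[Ch.~1]{RK05} --- so there is no in-paper argument to match yours against. Your strategy (follow the Dirichlet eigenvalue branches $\la_n(c)$ of $H_{a,c}^D$ as the right endpoint $c$ increases, identify the multiplicity of $\la_0$ as an eigenvalue of $H_{a,c}^D$ with $\dim(\ker(\Psi_-(\la_0,c)))$, and count crossings) is precisely the classical route of those references, and it parallels the machinery of Section \ref{s3} (monotone continuous eigenvalue branches of truncated operators, Lemma \ref{l3.5}). The identification of multiplicities, the min-max monotonicity, the continuity of the branches, $\la_n(c)\to+\infty$ as $c\downarrow a$, and the half-line limiting argument are all sound.

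The genuine gap is the single-crossing property, which you flag but do not close. Your equality rests on each nonincreasing branch meeting the level $\la_0$ at exactly one point, and you claim that ``unique continuation upgrades nonincreasing to strictly decreasing.'' In the block matrix case this implication does not follow as stated: unique continuation excludes a \emph{fixed} vector $v$ with $\Psi_-(\la_0,\dott)v\equiv 0$ on an interval, but it does not by itself exclude $\det(\Psi_-(\la_0,\dott))$ vanishing on an interval with a rotating kernel; and the min-max equality $\la_n(c)=\la_n(c')$ does not hand you an eigenfunction of $H_{a,c'}^D$ supported in $(a,c)$ to which unique continuation could be applied, since a vector in the zero-extended trial subspace realizing the Rayleigh quotient value $\la_0$ need not be an eigenfunction. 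This is exactly the non-routine content of matrix oscillation theory. Two standard repairs: (i) run the matrix Pr\"ufer argument, showing the eigenvalues of the unitary $e^{2i\te(x)}$ of Lemma \ref{l2.4} rotate strictly in one direction as they pass through the critical value; or (ii) --- cleaner, and in the spirit of the paper's own Theorems \ref{t3.6} and \ref{t3.9} --- prove the two inequalities in \eqref{1.17} separately. The bound $\dim\bigl(\ran\bigl(P\big((-\infty,\la_0);H_{a,b}^D\big)\big)\big)\le N_{(a,b)}(\Psi_-(\la_0,\dott))$ needs only that each branch ending below $\la_0$ crosses the level at least once, with the total number of branches sitting at $\la_0$ over the point $c$ equal to $\dim(\ker(\Psi_-(\la_0,c)))$; no strictness is required. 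For the reverse bound, span a subspace $S$ by the zero extensions of the eigenfunctions $\Psi_-(\la_0,\dott)v$, $v\in\ker(\Psi_-(\la_0,c_k))$, over the conjugate points $c_k\in(a,b)$: the quadratic form of $H_{a,b}^D$ equals $\la_0\|\cdot\|^2$ on $S$, no nonzero element of $S$ is a $\la_0$-eigenfunction of $H_{a,b}^D$ (here unique continuation genuinely applies, since elements of $S$ vanish on a right neighborhood of $b$), and hence $P\big((-\infty,\la_0);H_{a,b}^D\big)$ is injective on $S$. With \eqref{1.17} secured, your half-line argument for \eqref{1.18} goes through.
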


Also the amount of available literature on oscillation theory, disconjugacy theory, rotation numbers, etc.,  in the context of matrix-valued Sturm--Liouville operators and more generally, Hamiltonian systems with block matrix coefficients,
is far too numerous to be accounted for at this point. We thus just confine ourselves to a few pertinent references in this context such as, \cite[Ch.~10]{At64}, \cite{CGN16}, \cite[Ch.~2]{Co71}, \cite{Et70},
\cite{FJN03}--\cite{FJNN11}, \cite{GJ89}, \cite{Ha57}, \cite[Sects.~XI.10, XI.11]{Ha82}, \cite[Sect.~9.6]{Hi69}, \cite{Ja64}, \cite[Ch.~2]{JONNF16}, \cite[Chs.~4, 7]{Kr95}, \cite[Ch.~V]{Re80}. In spite of this wealth of results in oscillation theory in the matrix-valued context, it appears that the precise connection between oscillation and spectral properties contained in Theorem \ref{t1.3} is not covered by these sources, but goes back to \cite{RH77} (see also \cite[Ch.~1]{RK05}). In addition, we note that \cite[pp.~367--368]{RH77} briefly discusses the fact that results of the type Theorem \ref{t1.3} include the Morse index theorem (in this context see also \cite{GJ89}).

As in the context of Theorem \ref{t1.1}, Theorem \ref{t1.3} permits various extensions, particularly to
other self-adjoint, separated boundary conditions, etc. Therefore,  we omit further details at this point as we will treat a very general case in the main body of this paper.

While Theorem \ref{t1.3} is as close as possible to a matrix-valued analog of the celebrated classical scalar oscillation result, Theorem \ref{t1.1}, the analog of Theorem \ref{t1.2} in the matrix context remained an open problem since 1996. It is precisely this problem that will be settled in this paper. In fact, we will not only treat the case of Schr\"odinger (actually, general, three-coefficient Sturm--Liouville) operators and Dirac-type operators with matrix-valued coefficients (cf.\ \eqref{1.29}--\eqref{1.32} below), but the more general case of finite interval, half-line, and full-line Hamiltonian (also called, canonical) systems of the form,
\begin{equation}
J \Psi'(z,x) = [z A(x) + B(x)] \Psi(z,x), \quad x \in
\begin{cases}[a,b], & \text{if $- \infty < a < b < \infty$}, \\
[a,b), & \text{if $- \infty < a < b = \infty$}, \\
\bbR, & \text{if $(a,b) = \bbR$}, \end{cases}  \quad z \in \bbC,          \lb{1.19}
\end{equation}
for solutions $\Psi(z,\dott) \in \bbC^{2m \times \ell}$, $\ell \in \bbN$, $1 \leq \ell \leq 2m$,  satisfying
\begin{equation}
\Psi \in \begin{cases} AC([a,b])^{2m \times \ell}, & \text{if $- \infty < a < b < \infty$,} \\
AC_\loc([a,b))^{2m \times \ell}, & \text{if $-\infty < a < b = \infty$,} \\
AC_\loc(\bbR)^{2m \times \ell}, & \text{if $(a,b) = \bbR$}. \end{cases}    \lb{1.20}
\end{equation}
Here, $J = \left(\begin{smallmatrix} 0_m & - I_m \\ I_m & 0_m\end{smallmatrix}\right)$, $m \in \bbN$,
where $I_m$ is the identity matrix and $0_m$ is the zero matrix in
$\bbC^{m \times m}$, and given $r \in \bbN$, $1 \leq r \leq 2m$,
\begin{align}
\begin{split}
& 0 \leq A(x) \in \bbC^{2m \times 2m}, \quad
A(x) = \begin{pmatrix} W(x) & 0 \\ 0 & 0 \end{pmatrix},
\quad 0 < W(x) \in \bbC^{r \times r}, \\
& B(x) = B^*(x) \in \bbC^{2m \times 2m}     \lb{1.21}
\end{split}
\end{align}
for a.e.\ $x \in (a,b)$, with locally integrable entries as described in \eqref{2.2}--\eqref{2.4} and we assume again the
limit point case at $\pm \infty$.

Given the Hamiltonian system \eqref{1.19}, introducing
$E_r = \left(\begin{smallmatrix} I_r & 0 \\ 0 & 0 \end{smallmatrix}\right) \in \bbC^{2m \times 2m}$, one can introduce associated operators $T_{a,b}$, $T_a$, and $T$ in the finite interval, half-line, and
full-line case, mapping a subset of $L^2_A((a,b))^{2m}$ into $E_r L^2_A((a,b))^{2m}$, respectively, according to \eqref{2.21}--\eqref{2.23}. Here the space $L^2_A((c,d))^{2m}$ is introduced in \eqref{2.12}--\eqref{2.14}. For matters of brevity and simplicity, we confine ourselves for the
remainder of this introduction to the half-line case $- \infty < a < b=\infty$.

In addition, for $z \in \bbC \bs \bbR$ and a fixed reference point $x_0 \in (a,\infty)$, one can introduce appropriate Weyl--Titchmarsh solutions $\Psi_{-, \al}(z,\dott,x_0) \in \bbC^{2m \times m}$ and
$\Psi_{+}(z,\dott,x_0) \in \bbC^{2m \times m}$ of \eqref{1.19}, where $\Psi_{-, \al}(z,\dott,x_0)$ satisfies the self-adjoint $\al$-boundary condition at $x=a$,
\begin{equation}
\al^* J \Psi_{-,\al}(z, a,x_0) = 0,
\end{equation}
and $\Psi_{+}(z,\dott,x_0)$ satisfies for all $c \in (a, \infty)$,
\begin{equation}
\Psi_{+}(z,\dott,x_0) \in L^2_A((c,\infty))^{2m}.
\end{equation}
Here the boundary condition matrix $\alpha \in \bbC^{2m \times m}$ satisfies \eqref{2.17}, and the reference point $x_0$ is used
to introduce a convenient normalization of $\Psi_{-, \al}(z,\dott,x_0)$ and
$\Psi_{+}(z,\dott,x_0)$ as discussed in \eqref{2.47}--\eqref{2.50}.

Recalling a special case of the Wronskian-type identity for solutions
$\Psi(\lambda_j,\dott) \in \bbC^{2m \times \ell}$, $1 \leq \ell \leq 2m$, $\lambda_j \in \bbR$, $j=0,1$,
of \eqref{1.19},
\begin{align}
\f{d}{dx} [\Psi(\lambda_0,x)^* J \Psi(\lambda_1,x)] &= (\lambda_1 - \lambda_0)
\Psi(\lambda_0,x)^* A(x) \Psi(\lambda_1,x),  \quad x \in (a,b)  \lb{1.25}
\end{align}
(cf.\ \eqref{2.20}), we now denote
\begin{align}
\begin{split}
& N_{(c,d)}(\Psi_+(\la_0,\dott,x_0)^* J \Psi_{-,\al}(\la_1,\dott,x_0))   \\
& \quad := \sum_{x \in (c,d)} \dim(\ker(\Psi_+(\la_0,x,x_0)^*
J \Psi_{-,\al}(\la_1, x,x_0))),
\end{split}
\end{align}
for $(c,d) \subseteq (a,\infty)$. (In the special case of matrix-valued Schr\"odinger operators, see, \eqref{1.29}--\eqref{1.30}, by appropriately partitioning $\Psi_{-,\al}, \Psi_+$ into $m \times m$ blocks, one readily verifies that $\Psi(\lambda_0,x)^* J \Psi(\lambda_1,x)$ corresponds precisely to the Wronskian of $m \times m$ matrix-valued solutions of Schr\"odinger's equation in analogy to \eqref{1.11}.)

Finally, we also introduce the symbol $N((\la_0, \la_1); T_a)$ to denote the sum of geometric multiplicities of all eigenvalues of $T_a$ in the interval $(\la_0, \la_1)$.

Then our principal new result in the matrix-valued context, formulated in the special half-line
case (cf.\ Theorem \ref{t3.10}), and a direct analog of the scalar half-line case, \eqref{1.13}
in Theorem \ref{t1.2}, reads as follows:

\begin{theorem} \lb{t1.4}
Assume Hypotheses \ref{h2.2}, \ref{h3.1}, $\la_0, \la_1 \in \bbR \bs \si(T_a)$, $\la_0 < \la_1$, and
$(\la_0,\la_1)\cap\si_\ess(T_a)=\emptyset$. Then,
\begin{equation}
N((\la_0,\la_1); T_a) = N_{(a,\infty)}(\Psi_+(\la_0,\dott,x_0)^* J \Psi_{-,\al}(\la_1,\dott,x_0)).
\lb{1.28}
\end{equation}
\end{theorem}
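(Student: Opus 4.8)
The plan is to reduce the half-line statement \eqref{1.28} to the already-available finite-interval renormalized oscillation theorem (the matrix analog of \eqref{1.12}, which appears as Theorem \ref{t3.10} in the full-interval form) by an exhaustion argument in the right endpoint $b \uparrow \infty$, in the spirit of how \eqref{1.7} is deduced from \eqref{1.6} and how \eqref{1.18} follows from \eqref{1.17} in Theorem \ref{t1.3}. Since $\la_0,\la_1 \notin \sigma(T_a)$ and $(\la_0,\la_1) \cap \sigma_{\ess}(T_a) = \emptyset$, the operator $T_a$ has only finitely many eigenvalues (counting geometric multiplicity) in $(\la_0,\la_1)$, so $N((\la_0,\la_1);T_a) < \infty$, and likewise, by the finite-interval result, $N_{(a,b)}(\Psi_+(\la_0,\dott,x_0)^* J \Psi_{-,\al}(\la_1,\dott,x_0))$ is finite for each $b < \infty$; the goal is to show both sides stabilize to a common value.

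First I would fix the Weyl--Titchmarsh solution $\Psi_+(z,\dott,x_0)$ on the half-line and observe that, because $\la_1 \in \bbR \bs \sigma_{\ess}(T_a)$ lies in a spectral gap of $T_a$ (possibly a point of discrete spectrum, but $\la_1 \notin \sigma(T_a)$ excludes that too), $\Psi_+(\la_1,\dott,x_0)$ itself is a nonprincipal/decaying solution, and for finite $b$ large enough the operator $T_{a,b}$ with the $\alpha$-boundary condition at $a$ and a suitable boundary condition at $b$ chosen compatibly with $\Psi_+$ has its spectrum near $(\la_0,\la_1)$ converging (eigenvalue by eigenvalue, via strong resolvent convergence of $T_{a,b} \to T_a$ together with the gap structure that prevents spurious eigenvalues entering $(\la_0,\la_1)$) to that of $T_a$; hence $\dim(\ran(P((\la_0,\la_1);T_{a,b}))) = N((\la_0,\la_1);T_a)$ for all sufficiently large $b$. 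Applying the finite-interval renormalized oscillation theorem on $(a,b)$ with this choice then gives $N((\la_0,\la_1);T_a) = N_{(a,b)}(\Psi_+(\la_0,\dott,x_0)^* J \Psi_{-,\al}(\la_1,\dott,x_0))$ for all large $b$. To complete the identification with the half-line count I would then show the Wronskian $\Psi_+(\la_0,x,x_0)^* J \Psi_{-,\al}(\la_1,x,x_0)$ has no kernel-dimension contributions for $x$ near $\infty$: the monotonicity built into the Wronskian identity \eqref{1.25} (with $(\la_1 - \la_0)\Psi^* A \Psi \geq 0$) controls the "rotation" of this matrix function, and the limit point assumption at $\infty$ together with $L^2_A$-integrability of $\Psi_+(\la_0,\dott,x_0)$ and the gap condition forces the total number of zeros on $(a,\infty)$ to be finite and to equal its value on $(a,b)$ for $b$ large.

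The key steps in order: (i) establish finiteness of $N((\la_0,\la_1);T_a)$ from the hypotheses; (ii) prove eigenvalue convergence $\dim(\ran(P((\la_0,\la_1);T_{a,b}))) \to N((\la_0,\la_1);T_a)$ as $b \uparrow \infty$, using strong resolvent convergence plus absence of essential spectrum in $(\la_0,\la_1)$ to rule out eigenvalues sneaking in or out of the interval (this is where the gap hypothesis $(\la_0,\la_1)\cap\sigma_{\ess}(T_a)=\emptyset$ does its work); (iii) invoke the finite-interval renormalized oscillation theorem (Theorem \ref{t3.10}) to equate $\dim(\ran(P((\la_0,\la_1);T_{a,b})))$ with $N_{(a,b)}(\Psi_+(\la_0,\dott,x_0)^* J \Psi_{-,\al}(\la_1,\dott,x_0))$ for each fixed $b$; (iv) show the right-hand Wronskian zero count is eventually constant in $b$, i.e.\ $N_{(a,b)}(\dott) = N_{(a,\infty)}(\dott) < \infty$ for $b$ large, via the sign-definiteness in \eqref{1.25} and the limit point / $L^2_A$ decay of $\Psi_+(\la_0,\dott,x_0)$; (v) combine (ii)--(iv) to pass to the limit. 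The main obstacle I anticipate is step (iv) — controlling the zeros of the Wronskian matrix near $\infty$: in the matrix-valued case one must ensure that $\dim(\ker(\Psi_+(\la_0,x,x_0)^* J \Psi_{-,\al}(\la_1,x,x_0)))$ does not keep contributing as $x \to \infty$, which requires a careful monotonicity argument (a matrix Prüfer-type or Maslov-index estimate) showing the "number of rotations" of this Lagrangian-plane-valued function is finite, rather than the cruder comparison available for the ordinary count of zeros of a single solution; reconciling this with the half-line spectral side is exactly the delicate point that distinguishes Theorem \ref{t1.4} from the scalar case \eqref{1.13}.
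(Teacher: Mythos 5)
Your overall strategy --- exhaust the half line by finite intervals with a right boundary condition matched to $\Psi_+(\la_0,\dott,x_0)$ and pass to the limit --- is indeed the spirit of the paper's proof, but your argument has a circularity at its core: in step (iii) you invoke ``the already-available finite-interval renormalized oscillation theorem (the matrix analog of \eqref{1.12})'', yet that finite-interval matrix result is precisely what is new here and is not available from the literature; the paper proves the finite-interval and half-line statements by one and the same argument (Theorems \ref{t3.6} and \ref{t3.9}), so there is no independently established finite-interval theorem to reduce to. What actually has to be supplied, and what your outline omits, is (a) the identification $N(T_{a,c}-\la_1E_r)=N\big(\Psi_+(\la_0,c,x_0)^*J\Psi_{-,\al}(\la_1,c,x_0)\big)$ (Theorem \ref{t3.4}), converting kernel dimensions of the Wronskian at $x=c$ into eigenvalue multiplicities of the truncated operator, and (b) a mechanism for counting how many times $\la_1$ is crossed by eigenvalues of $T_{a,c}$ as $c$ sweeps $(a,b)$. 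The paper obtains (b) from the compression identity $P_cR_a(\la_0)P_c=R_{a,c}(\la_0)\oplus 0$ (Corollary \ref{c3.2}), which via the min--max principle makes the eigenvalues of $R_{a,c}(\la_0)$ monotone and continuous in $c$ (Lemma \ref{l3.5}); this is exactly where the specific choice \eqref{3.1} of the boundary condition at $c$ does its work.

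Two of your individual steps also would not go through as stated. In (ii) you claim that strong resolvent convergence plus the gap hypothesis forces $\dim\bigl(\ran\bigl(P\big((\la_0,\la_1);T_{a,b}\big)\big)\big)=N((\la_0,\la_1);T_a)$ for large $b$ and ``prevents spurious eigenvalues entering $(\la_0,\la_1)$''; but eigenvalues of the truncated operators do enter and traverse the gap as $b$ grows --- that is precisely the phenomenon the Wronskian zeros record (each $x$ with $\ker\big(\Psi_+(\la_0,x,x_0)^*J\Psi_{-,\al}(\la_1,x,x_0)\big)\neq\{0\}$ is a value of $c$ at which $\la_1\in\si_p(T_{a,c})$, by Theorem \ref{t3.4}) --- so soft convergence arguments yield only the one-sided estimate of Theorem \ref{t3.6}. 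In (iv) you propose to prove finiteness and eventual constancy of the Wronskian count near $\infty$ by a direct matrix Pr\"ufer/Maslov monotonicity estimate, but you give no such argument; in the paper the finiteness of $\sum_{x\in(a,b)}N(\dott)$ is not proved directly at all --- it falls out of the opposite inequality (Theorem \ref{t3.9}), established by an explicit trial-space construction: at each zero $c_k$ of the Wronskian one glues $\Psi_{-,\al}(\la_1,\dott,x_0)$ on $(a,c_k)$ to $\Psi_+(\la_0,\dott,x_0)$ on $(c_k,b)$, checks that the resulting functions lie in $\dom(T_a)$ and satisfy the orthogonality relation of Lemma \ref{l3.8} (via Lemma \ref{l3.7}), and feeds them into the min--max principle for $R_a(\la_0)$. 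This lower bound is entirely absent from your proposal, and without it neither the finiteness claimed in (iv) nor the equality \eqref{1.28} can be concluded.
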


We emphasize that the interval $(\la_0,\la_1)$ can lie in any essential spectral gap of $T_a$, not just below its essential spectrum as in standard approaches to oscillation theory in the matrix-valued context.
Extensions to the finite interval as well as full-line cases will be discussed in the main body of this paper.
Moreover, these types of oscillation results for general Hamiltonian systems, to the best of our knowledge, appear to be new even in the special scalar case $m=1$.

Without entering details, we note that the new strategy of proof in this matrix-valued extension of the 1996 scalar oscillation theory result in \cite{GST96a} differs from the one originally employed in \cite{GST96a} and now rests to a large extent on approximations of a given operator by appropriate restrictions.

Finally, to demonstrate the well-known fact that three-coefficient Sturm--Liouville as well as Dirac-type operators are included in Hamiltonian systems of the form \eqref{1.19} as special cases, it suffices to recall the following observations: The $m \times m$ matrix-valued Sturm--Liouville differential expression
\begin{equation}
R(x)^{-1} [- (d/dx) P(x) (d/dx) + Q(x)],    \lb{1.29}
\end{equation}
with $P(x), Q(x), R(x) \in \bbC^{m \times m}$, $m \in \bbN$, appropriate positivity hypotheses
on $P, R$, and local integrability of $P^{-1}, Q, R$, subordinates to the Hamiltonian system
\eqref{1.19} with the choice
\begin{equation}
A(x) =\begin{pmatrix} R(x) & 0_m \\[1mm] 0_m & 0_m \end{pmatrix}, \quad
B(x) = \begin{pmatrix} - Q(x) & 0_m \\[1mm] 0_m & P(x)^{-1} \end{pmatrix}.    \lb{1.30}
\end{equation}
Similarly, the Dirac-type differential expression
\begin{equation}
J (d/dx) - B(x),    \lb{1.31}
\end{equation}
with $B(x) \in \bbC^{2m \times 2m}$ and locally integrable entries, simply corresponds to \eqref{1.19} with the choice
\begin{equation}
A(x) =I_{2m}.     \lb{1.32}
\end{equation}

At this point we briefly turn to the content of each section: Section \ref{s2} recalls the basics of Hamiltonian systems as needed in this paper and proves a few additional facts in this context that appear to be new. Renormalized oscillation theory on a half-line is discussed in detail in
Section \ref{s3}. (The treatment of a finite interval is a simple special case of the half-line case.) The principal result, Theorem \ref{t3.10}, coincides with Theorem \ref{t1.4} above. The extension to the full line case is developed in our final Section \ref{s4}.

Finally, we briefly comment on the notation used in this paper: Throughout, $\cH$
denotes a separable, complex Hilbert space with inner product and norm denoted by $(\dott,\dott)_{\cH}$ (linear in the second argument) and $\|\cdot \|_{\cH}$, respectively. The identity operator in $\cH$ is written as $I_{\cH}$. We denote by $\cB(\cH)$ (resp., $\cB_{\infty}(\cH)$) the Banach space of linear bounded (resp., compact) operators in $\cH$. The domain, range, kernel (null space), and spectrum of a linear operator will be denoted by $\dom(\cdot)$, $\ran(\cdot)$, $\ker(\cdot)$, and $\si(\cdot)$, respectively. For a self-adjoint operator $A$ in $\cH$, $P((\la_0,\la_1); A)$ denotes the strongly
right-continuous spectral projection of $A$ associated to the open interval $(\la_0,\la_1) \subset \bbR$.

The space of $k \times \ell$ matrices with complex-valued entries is denoted by
$\bbC^{k \times \ell}$, or simply by $\bbC^k$ if $\ell = 1$.
The symbol $I_k$ represents the identity matrix in $\bbC^{k \times k}$.
The shorthand notation $L^p((a,b))^{k \times \ell} := L^p ((a,b), dx; \bbC^{k \times \ell})$, $p \geq 1$, $k, \ell \in \bbN$, and for its variants with $(a,b)$ replaced by $[a,b)$ and/or $\bbR$ as well as in the case of local integrability, will be used. The superscript $\ell$ is again dropped if $\ell = 1$.
We employ the same conventions to (locally) absolutely continuous functions replacing $L^p$ by $AC$. In particular, we use the convention,
$AC_\loc([a,\infty)) = \big\{\phi \in AC([a,c]) \, \text{for all $c > a$}\big\}$.

\section{Basic Facts on Hamiltonian Systems} \label{s2}

In this section we recall the basic results on a class of Hamiltonian systems on arbitrary
intervals. For basic results on Hamiltonian systems we will employ in this paper we refer, for
instance, to \cite[Chs.~9--10]{At64}, \cite[Ch.~2]{Co71}, \cite[Sects.~XI.10, XI.11]{Ha82},
\cite{HS81}--\cite{HS86},
\cite{Kr89a}, \cite{Kr89b}, \cite[Chs.~4, 7]{Kr95}, \cite{LM03}, see also \cite{JONNF16} for a
most recent treatment of oscillation, spectral, and control theory for Hamiltonian systems.

\begin{hypothesis} \lb{h2.1}
Fix $m \in \bbN$ and introduce the $2m \times 2m$ matrix
\begin{equation}
J = \begin{pmatrix} 0_m & - I_m \\ I_m & 0_m\end{pmatrix}    \lb{2.1}
\end{equation}
where $I_m$ is the identity matrix and $0_m$ is the zero matrix in
$\bbC^{m \times m}$. Let $-\infty\leq a < b\leq\infty$ and fix $r \in \bbN$
such that $1 \leq r \leq 2m$. Assume $($for a.e.~$x \in (a,b)$$)$
\begin{equation}
0 < W(x) \in \bbC^{r \times r}, \quad W \in
\begin{cases}
L^1([a,b])^{r \times r}, & \text{if $-\infty < a < b < \infty$,} \\
L^1_\loc([a,b))^{r \times r}, & \text{if $-\infty < a < b = \infty$,}\\
L^1_\loc(\bbR)^{r \times r}, & \text{if $(a,b)=\bbR$,}
\end{cases}     \lb{2.2}
\end{equation}
and introduce $($again for a.e.~$x \in (a,b)$$)$
\begin{align}
\begin{split}
& 0 \leq A(x), C(x), E_r \in \bbC^{2m \times 2m}, \\
& A(x) = \begin{pmatrix} W(x) & 0 \\ 0 & 0 \end{pmatrix}, \quad
C(x) = \begin{pmatrix} W(x)^{-1} & 0 \\ 0 & I_{2m-r} \end{pmatrix}, \quad
E_r = \begin{pmatrix} I_r & 0 \\ 0 & 0 \end{pmatrix},    \lb{2.3}
\end{split}
\end{align}
so that $C A = A C = E_r$. In addition, assume $($once more
for a.e.~$x \in (a,b)$$)$
\begin{equation}
B(x) = B(x)^* \in \bbC^{2m \times 2m}, \quad B \in
\begin{cases}
L^1([a,b])^{2m \times 2m}, & \text{if $-\infty < a < b < \infty$,} \\
L^1_\loc([a,b))^{2m \times 2m}, & \text{if $-\infty < a < b = \infty$,} \\
L^1_\loc(\bbR)^{2m \times 2m}, & \text{if $(a,b)=\bbR$.}
\end{cases}      \lb{2.4}
\end{equation}
\end{hypothesis}

Granted the matrices $A, B$, and depending on whether
$a$ and/or $b$ are finite, we consider  Hamiltonian systems of the form,
\begin{equation}
J \Psi'(z,x) = [z A(x) + B(x)] \Psi(z,x), \quad x \in
\begin{cases}[a,b], & \text{if $- \infty < a < b < \infty$}, \\
[a,b), & \text{if $- \infty < a < b = \infty$}, \\
\bbR, & \text{if $(a,b) = \bbR$}, \end{cases}  \quad z \in \bbC,          \lb{2.10}
\end{equation}
for solutions $\Psi(z,\dott) \in \bbC^{2m \times \ell}$, $\ell \in \bbN$, $1 \leq \ell \leq 2m$,  satisfying
\begin{equation}
\Psi \in \begin{cases} AC([a,b])^{2m \times \ell}, & \text{if $- \infty < a < b < \infty$,} \\
AC_\loc([a,b))^{2m \times \ell}, & \text{if $-\infty < a < b = \infty$,} \\
AC_\loc(\bbR)^{2m \times \ell}, & \text{if $(a,b) = \bbR$}. \end{cases}
\end{equation}

Let $c<d$. It is convenient to introduce the Hilbert space
\begin{align}
L^2_W((c,d))^{r} :=\big\{ F: (c,d) \to \bbC^{r} \, \text{measurable} \,:\, \|F\|_{L^2_W((c,d))^{r}}<\infty \big\},     \lb{2.12}
\end{align}
with the norm
\begin{equation}
\|F\|_{L^2_W((c,d))^{r}}^2 :=\int_c^d dx \, F(x)^* W(x) F(x).   \lb{2.13}
\end{equation}
In addition, we introduce the natural restriction operator $\wha E_r: \bbC^{2m} \to \bbC^r$ and the space
\begin{align}
L^2_A((c,d))^{2m}= \big\{F: (c,d) \to \bbC^{2m} \, \text{measurable} \, \big|\, \wha E_r F \in L^2_W((c,d))^{2m}\big\},    \lb{2.14}
\end{align}
with the seminorm $\|F\|_{L^2_A((c,d))^{2m}} = \|\wha E_r F\|_{L^2_W((c,d))^{r}}$.

In order to be able to discuss boundary conditions at $a,b$ if the latter are finite we now introduce a class of matrices $\al = (\al_1\;\; \al_2)^\top := \binom{\al_1}{\al_2} \in \bbC^{2m \times m}$ satisfying
that
\begin{equation}
(\al \;\; J\al) = \begin{pmatrix} \al_1 & - \al_2 \\ \al_2 & \al_1  \end{pmatrix} \,
\text{ is a unitary $\bbC^{2m \times 2m}$ matrix.}   \lb{2.17}
\end{equation}
Explicitly, \eqref{2.17} reads
\begin{align}
& \al_1 \al_1^* + \al_2 \al_2^* = I_m = \al_1^* \al_1 + \al_2^* \al_2,
\quad
\al_1 \al_2^* = \al_2 \al_1^*,
\quad
\al_1^* \al_2 = \al_2^* \al_1.    \lb{2.18}
\end{align}
We also point out that \eqref{2.17} is equivalent to
\begin{equation}
\al^* \al = I_m, \quad \al^* J \al = 0_m, \quad
\al \al^* - J \al \al^* J = I_{2m}.     \lb{2.19}
\end{equation}

From this point on, if $b = \infty$ (resp., $a = - \infty$), we will always assume the limit point case at $\infty$ (resp., $- \infty$). (We recall that the limit point case at $\infty$ (resp. $- \infty$) is known to be equivalent to the fact that for all $z \in \bbC \bs \bbR$, $c \in (a,b)$, the dimension of all $L^2_A([c, \infty))^{2m}$-solutions (resp., $L^2_A((-\infty,c])^{2m}$-solutions) of \eqref{2.10} equals $m$.) Moreover, we will always assume that all solutions $\Psi$ of \eqref{2.10} with $\ell = 1$ satisfy Atkinson's {\it definiteness condition} in the form below:

\begin{hypothesis} \lb{h2.2}
Assume Hypothesis \ref{h2.1}. \\
$(i)$ Suppose that for all $c, d \in (a,b)$ with $a < c < d < b$, any nonzero solution $\Psi \in AC([a,b])^{2m}$ if $- \infty < a < b < \infty$, $\Psi \in AC_\loc([a,\infty))^{2m}$ if $- \infty < a < b = \infty$, or
$\Psi \in AC_\loc(\bbR)^{2m}$ if $(a,b) = \bbR$, of \eqref{2.10} satisfies
\begin{equation}
\|\chi_{[c,d]} \Psi\|_{L^2_A((a,b))^{2m}} > 0.
\end{equation}
$(ii)$ The boundary condition matrices $\al, \be, \ga \in \bbC^{2m \times m}$ corresponding to $a > - \infty$, $b < \infty$, and $c \in (a,b)$, respectively, are assumed to satisfy \eqref{2.17} $($equivalently, \eqref{2.18}, \eqref{2.19}$)$. \\
$(iii)$ If $b = \infty$ $($resp., $a = - \infty$$)$, we assume the limit point case at $\infty$ $($resp., $- \infty$$)$.
\end{hypothesis}

One recalls the Wronskian-type identity for solutions
$\Psi(z_j,\dott) \in \bbC^{2m \times \ell}$, $1 \leq \ell \leq 2m$, $z_j \in \bbC$, $j=1,2$, of the inhomogeneous equation,
\begin{equation}
J\Psi'(z_j,x) = [z_j A(x) + B(x)] \Psi(z,x) + D_j(x), \quad x \in (a,b), \; j=1,2,
\end{equation}
with $D_j \in L^1_\loc((a,b))^{2m \times \ell}$,
\begin{align}
\f{d}{dx} [\Psi(z_1,x)^* J \Psi(z_2,x)] &= (z_2 - \ol{z_1}) \Psi(z_1,x)^* A(x) \Psi(z_2,x) \lb{2.20} \\
& \quad + \Psi(z_1,x)^* D_2 (x) - D_1 (x)^* \Psi(z_2,x),  \quad x \in (a,b). \no
\end{align}

Given boundary matrices $\al$ and $\be$ satisfying \eqref{2.17} (equivalently,
\eqref{2.18}, \eqref{2.19}), we define the matrix-valued differential expression $\tau$ by
\begin{equation}
\tau = C(x) \bigg[J \f{d}{dx} - B(x)\bigg], \quad x \in (a,b).
\end{equation}
An operator $T_{a,b} : \dom(T_{a,b}) \to E_r L^2_A((a,b))^{2m}$ if
$- \infty < a < b < \infty$, $T_a: \dom(T_a) \to E_r L^2_A((a,\infty))^{2m}$ if
$b = \infty$, and $T: \dom(T) \to E_r L^2_A(\bbR)^{2m}$ if $(a,b) = \bbR$, associated to the Hamiltonian system \eqref{2.10} is then introduced as follows:
\begin{align}
& T_{a,b} F = \tau F,    \no \\
& F \in  \dom(T_{a,b}) = \big\{G \in L^2_A((a,b))^{2m} \, \big| \,
G \in AC([a,b])^{2m}; \, \al^* J G(a) = 0 = \be^* J G(b);   \no \\
& \hspace*{5.7cm} \, \tau G \in E_r L^2_A((a,b))^{2m} \big\},   \lb{2.21} \\
& T_a F = \tau F,    \no \\
& F \in  \dom(T_a) = \big\{G \in L^2_A((a,\infty))^{2m} \, \big| \,
G \in AC_\loc([a,\infty)^{2m}; \, \al^* J G(a) = 0;   \no \\
& \hspace*{5.7cm} \, \tau G \in E_r L^2_A((a,\infty))^{2m} \big\},   \lb{2.22} \\
& T F = \tau F,   \lb{2.23}\\
& F \in  \dom(T) = \big\{G \in L^2_A(\bbR)^{2m} \, \big| \,
G \in AC_\loc(\bbR)^{2m}; \, \tau G \in E_r L^2_A(\bbR)^{2m} \big\}.   \no
\end{align}

The boundary condition $\al^* J G(a) = 0$ can be seen to be equivalent to that  discussed, for instance, in \cite[p.\ 319]{HS81} choosing
\begin{equation}
M = \begin{pmatrix} 0_m & \al_1 \\ 0_m & \al_2 \end{pmatrix},
\end{equation}
and similarly, $\be^* J G(b) = 0$ corresponds to the choice
\begin{equation}
N = \begin{pmatrix} \be_1 & 0_m \\ \be_2 & 0_m \end{pmatrix}
\end{equation}
in \cite[p.\ 319]{HS81} so that $M^* J M = 0_{2m} = N^* J N$ and
$\ker (M) \cap \ker (N) = \{0\}$.

As discussed in \cite[Sect.\ 2]{HS82}, for $z \in \bbC \bs \bbR$, one has
bijections
\begin{align}
(T_{a,b} -z E_r) &: \dom(T_{a,b}) \to E_r L^2_A((a,b))^{2m},   \\
(T_a -z E_r) &: \dom(T_a) \to E_r L^2_A((a,\infty))^{2m},   \\
(T -z E_r) &: \dom(T) \to E_r L^2_A(\bbR)^{2m},
\end{align}
and the estimates
\begin{align}
&\big\|(T_{a,b} - z E_r)^{-1} F\big\|_{L^2_A((a,b))^{2m}} \leq
|\Im(z)|^{-1} \|F\|_{L^2_A((a,b))^{2m}}, \quad F \in E_r L^2_A((a,b))^{2m},    \no \\
&\big\|(T_a - z E_r)^{-1} F\big\|_{L^2_A((a,\infty))^{2m}} \leq
|\Im(z)|^{-1} \|F\|_{L^2_A((a,\infty))^{2m}}, \quad F \in E_r L^2_A((a,\infty))^{2m},   \no \\
&\big\|(T - z E_r)^{-1} F\big\|_{L^2_A(\bbR)^{2m}} \leq
|\Im(z)|^{-1} \|F\|_{L^2_A(\bbR)^{2m}}, \quad F \in E_r L^2_A(\bbR)^{2m}.
\lb{2.30}
\end{align}

Following \cite[Sect.\ 2]{HS82}, the spectrum, $\si(T_{a,b})$ of $T_{a,b}$ consists of those $\la \in \bbC$ such that $(T_{a,b} - \la E_r)$ has no bounded inverse, and analogously for $\si(T_a)$ and $\si(T)$.
In particular, $\la \in \si_p(T_{a,b})$ (resp., $\la \in \si_p(T_a)$ or $\la \in \si_p(T$)) if and only if there exists $\Psi \in \dom(T_{a,b})$ (resp., $\Psi \in \dom(T_a)$ or $\Psi \in \dom(T)$) such that
\begin{equation}
(T_{a,b} - \la E_r) \Psi = 0 \quad
(\text{resp., } \, (T_a - \la E_r) \Psi = 0 \, \text{ or } \,
(T - \la E_r) \Psi = 0).
\end{equation}
By the estimates \eqref{2.30},
\begin{equation}
\si(T_{a,b}) \subseteq \bbR, \quad \si(T_a) \subseteq \bbR,
\quad \si(T) \subseteq \bbR.
\end{equation}
In the case $- \infty < a < b < \infty$ the spectrum $\si(T_{a,b})$ is purely discrete,
$\si(T_{a,b}) = \si_d(T_{a,b})$, that is, it consists of isolated eigenvalues only.

Next, employing the adjoint of $\wha E_r$, the extension operator by zero, $\wha E_r^* : \bbC^r \to \bbC^{2m}$ and recalling that $\wha E_r L^2_A((a,b))^{2m} = L^2_W((a,b))^r$, we introduce the restricted resolvents,
\begin{align}
R_{a,b}(z) &:= \wha E_r (T_{a,b} - z E_r)^{-1} \wha E_r^*
\in \cB\big(L^2_W((a,b))^r\big), \quad z\in\bbC\bs\si(T_{a,b}),  \no
\\
R_a(z) &:= \wha E_r (T_a - z E_r)^{-1} \wha E_r^*
\in \cB\big(L^2_W((a,\infty))^r\big), \quad z\in\bbC\bs\si(T_a),  \lb{2.31}
\\
R(z) &:= \wha E_r (T - z E_r)^{-1} \wha E_r^*
\in \cB\big(L^2_W(\bbR)^r\big), \quad z\in\bbC\bs\si(T).    \no
\end{align}
Of importance in the sequel will be a spectral mapping result of the following form.

\begin{lemma} \lb{l2.3}
Suppose $\la_0\in\bbC\bs\si(T)$. Then $\la_1\in\si_p(T)$ if and only if $(\la_1-\la_0)^{-1}\in\si_p(R(\la_0))$ and the geometric multiplicities of $\la_1$ and $(\la_1-\la_0)^{-1}$ are equal. In addition, $\la_1\in\bbC\bs\si(T)$ if and only if $(\la_1-\la_0)^{-1}\in\bbC\bs\si(R(\la_0))$. Analogous results also hold for $T_a$, $R_a$ and $T_{a,b}$, $R_{a,b}$.
\end{lemma}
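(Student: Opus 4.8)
The plan is to reduce everything to the ordinary first resolvent identity, carefully tracking the compression maps $\wha E_r,\wha E_r^*$ and the weight $E_r$; I treat only $T$, since the arguments for $T_a$ and $T_{a,b}$ are word-for-word the same with $\bbR$ replaced by $(a,\infty)$, respectively $(a,b)$. First I would record the elementary facts $\wha E_r\wha E_r^* = I_{L^2_W(\bbR)^r}$, $\wha E_r^*\wha E_r = E_r$, $\wha E_r L^2_A(\bbR)^{2m} = L^2_W(\bbR)^r$, $\wha E_r^* L^2_W(\bbR)^r = E_r L^2_A(\bbR)^{2m}$, together with the observation that $E_r$ maps $L^2_A(\bbR)^{2m}$ (hence $\dom(T)$) into $E_r L^2_A(\bbR)^{2m}$ with $\wha E_r E_r G = \wha E_r G$; this last point is exactly what permits $(T-\la_0 E_r)^{-1}$ to be applied to terms of the form $E_r\Psi$. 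Using $(T-\la_0 E_r) - (T-\la_1 E_r) = (\la_1-\la_0)E_r$ and the bijectivity of $(T-\la_j E_r)\colon\dom(T)\to E_r L^2_A(\bbR)^{2m}$ for $\la_j\in\bbC\bs\si(T)$, I would then derive $(T-\la_1 E_r)^{-1} - (T-\la_0 E_r)^{-1} = (\la_1-\la_0)(T-\la_1 E_r)^{-1}E_r(T-\la_0 E_r)^{-1}$ on $E_r L^2_A(\bbR)^{2m}$, and, compressing on the left by $\wha E_r$ and on the right by $\wha E_r^*$ and inserting $E_r = \wha E_r^*\wha E_r$, the genuine resolvent identity $R(\la_1) - R(\la_0) = (\la_1-\la_0)R(\la_0)R(\la_1)$ on $L^2_W(\bbR)^r$ for all $\la_0,\la_1\in\bbC\bs\si(T)$. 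I would also note $\|(T-zE_r)^{-1}F\|_{L^2_A} = \|R(z)\wha E_r F\|_{L^2_W}$ for $F\in E_r L^2_A(\bbR)^{2m}$, so that a bounded inverse of $T-zE_r$ in the paper's sense corresponds precisely to $R(z)\in\cB(L^2_W(\bbR)^r)$.

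For the eigenvalue correspondence I would use the two linear maps $\Psi\mapsto\wha E_r\Psi$ and $g\mapsto(\la_1-\la_0)(T-\la_0 E_r)^{-1}\wha E_r^* g$. If $(T-\la_1 E_r)\Psi = 0$ with $\Psi\in\dom(T)$, then $(T-\la_0 E_r)\Psi = (\la_1-\la_0)E_r\Psi$, hence $\Psi = (\la_1-\la_0)(T-\la_0 E_r)^{-1}E_r\Psi$; applying $\wha E_r$ gives $R(\la_0)\wha E_r\Psi = (\la_1-\la_0)^{-1}\wha E_r\Psi$, and $\wha E_r\Psi = 0$ would force $E_r\Psi = 0$, hence $(T-\la_0 E_r)\Psi = 0$, hence $\Psi = 0$ (as $\la_0\notin\si(T)$), so this map is injective on $\ker(T-\la_1 E_r)$. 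Conversely, if $R(\la_0)g = (\la_1-\la_0)^{-1}g$ with $g\ne 0$, then $\Psi := (\la_1-\la_0)(T-\la_0 E_r)^{-1}\wha E_r^* g\in\dom(T)$ satisfies $\wha E_r\Psi = g$ and $E_r\Psi = \wha E_r^* g$, and since $(T-\la_0 E_r)\Psi = (\la_1-\la_0)\wha E_r^* g$ a one-line computation gives $(T-\la_1 E_r)\Psi = 0$. A direct check shows the two maps are mutually inverse between $\ker(T-\la_1 E_r)$ and $\ker\big(R(\la_0) - (\la_1-\la_0)^{-1}I\big)$, yielding equality of geometric multiplicities, and in particular $\la_1\in\si_p(T)$ iff $(\la_1-\la_0)^{-1}\in\si_p(R(\la_0))$.

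For the resolvent-set statement: if $\la_1\in\bbC\bs\si(T)$, I would verify purely algebraically from the resolvent identity above that $(\la_1-\la_0)I + (\la_1-\la_0)^2 R(\la_1)$ is a two-sided inverse of $(\la_1-\la_0)^{-1}I - R(\la_0)$ in $\cB(L^2_W(\bbR)^r)$. In the other direction, if $Q := \big((\la_1-\la_0)^{-1}I - R(\la_0)\big)^{-1}\in\cB(L^2_W(\bbR)^r)$ exists, I would set $S := (\la_1-\la_0)^{-1}(T-\la_0 E_r)^{-1}\wha E_r^* Q\wha E_r\colon E_r L^2_A(\bbR)^{2m}\to\dom(T)$ and compute, exactly as in the previous paragraph, that $(T-\la_1 E_r)S = I$ on $E_r L^2_A(\bbR)^{2m}$; together with injectivity of $T-\la_1 E_r$ (which follows since now $\ker\big(R(\la_0) - (\la_1-\la_0)^{-1}I\big) = \{0\}$), this exhibits $T-\la_1 E_r$ as a bijection with bounded inverse $S$, i.e., $\la_1\in\bbC\bs\si(T)$.

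The one genuine subtlety, and the step I would be most careful about, is the bookkeeping forced by the weight: making sure that every object lives in the space it is claimed to (so that $(T-\la_0 E_r)^{-1}$, $\wha E_r$, $\wha E_r^*$, and $E_r$ may legitimately be composed), and that ``bounded inverse of $T-\la E_r$'' in the sense of the paper's definition of $\si(T)$ really is equivalent to invertibility of the honest Hilbert-space operator $(\la_1-\la_0)^{-1}I - R(\la_0)$ on $L^2_W(\bbR)^r$. Once that translation is in place, everything reduces to routine resolvent algebra.
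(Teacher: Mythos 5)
Your proposal is correct and follows essentially the same route as the paper: the eigenvalue correspondence is implemented by the same pair of maps $\Psi\mapsto\wha E_r\Psi$ and $g\mapsto(\la_1-\la_0)(T-\la_0E_r)^{-1}\wha E_r^*g$ (you phrase it as a mutually inverse bijection of kernels where the paper proves two opposite inequalities), and your candidate inverse $(\la_1-\la_0)I+(\la_1-\la_0)^2R(\la_1)$ is exactly the paper's operator $(\la_1-\la_0)S$ with $S=\wha E_r(T-\la_0E_r)(T-\la_1E_r)^{-1}\wha E_r^*$, rewritten via the compressed resolvent identity. The only cosmetic difference is that you first derive $R(\la_1)-R(\la_0)=(\la_1-\la_0)R(\la_0)R(\la_1)$ so the forward direction of the resolvent-set claim becomes pure algebra, whereas the paper verifies two-sidedness by direct computation with the unbounded operators; both are sound.
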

\begin{proof}
First, suppose $\la_1\in\si_p(T)$ is of geometric multiplicity $n$. In this case there exist $\{\Psi_j\}_{j=1}^n\subset\dom(T)$ such that $\{\Psi_j\}_{j=1}^n$ are linearly independent in $L^2_A(\bbR)^{2m}$ and $T\Psi_j = \la_1 E_r\Psi_j$, $j=1,\dots,n$. Subtracting $\la_0 E_r \Psi_j$ from both sides of the last identity and rearranging yield
\begin{align}
(T - \la_0 E_r)^{-1} E_r \Psi_j = (\la_1 - \la_0)^{-1} \Psi_j, \quad j=1,\dots,n,
\end{align}
and hence, by $E_r = \wha E_r^* \wha E_r$,
\begin{align}
R(\la_0)\wha E_r \Psi_j = (\la_1 - \la_0)^{-1} \wha E_r\Psi_j, \quad j=1,\dots,n.
\end{align}
Since $\|\wha E_r\Psi\|_{L^2_W(\bbR)^{r}}=\|\Psi\|_{L^2_A(\bbR)^{2m}}$ for any $\Psi\in L^2_A(\bbR)^{2m}$, one concludes that $\{\wha E_r \Psi_j\}_{j=1}^n$ are linearly independent in $L^2_W(\bbR)^{r}$, and hence, $(\la_1-\la_0)^{-1}\in\si_p(R(\la_0))$ is of geometric multiplicity $k\geq n$.

Conversely, suppose $(\la_1-\la_0)^{-1}\in\si_p(R(\la_0))$ is of geometric multiplicity $k$. In this case there exist linearly independent $\{\wha\Psi_j\}_{j=1}^k\subset L^2_W(\bbR)^{r}$ such that
$R(\la_0)\wha\Psi_j = (\la_1-\la_0)^{-1}\wha\Psi_j$, $j=1,\dots,k$. Define $\Psi_j = (\la_1-\la_0)(T-\la_0E_r)^{-1}\wha E_r^*\wha\Psi_j$, then one has $\Psi_j\in\dom(T)$ and
\begin{align} \lb{2.31a}
\wha E_r\Psi_j = (\la_1-\la_0)R(\la_1)\wha\Psi_j = \wha\Psi_j, \quad j=1,\dots,k.
\end{align}
Multiplying the last identity by $\wha E_r^*$ and recalling that $E_r=\wha E_r^*\wha E_r$ then yield $E_r\Psi_j=\wha E_r^*\wha\Psi_j$, $j=1,\dots,k$. Thus, one obtains from the definition of $\Psi_j$ that
\begin{align}
(T-\la_0 E_r)\Psi_j = (\la_1-\la_0)\wha E_r^*\wha\Psi_j = (\la_1-\la_0) E_r\Psi_j, \quad j=1,\dots,k,
\end{align}
and hence, $T\Psi_j = \la_1 E_r\Psi_j$, $j=1,\dots,k$. Since $\|\Psi\|_{L^2_A(\bbR)^{2m}} = \|\wha E_r\Psi\|_{L^2_W(\bbR)^{r}}$ for any $\Psi\in L^2_A(\bbR)^{2m}$, one concludes from \eqref{2.31a} that $\{\Psi_j\}_{j=1}^k$ are linearly independent in $L^2_A(\bbR)^{2m}$, and hence, $\la_1\in\si_p(T)$ is of geometric multiplicity $n\geq k$. The two opposite inequalities yield equality of geometric multiplicities $n=k$.

Next, suppose $\la_1\in\bbC\bs\si(T)$. Then one has for $j=0,1$,
\begin{align}
&(T-\la_j E_r)^{-1}(T-\la_j E_r)F=F, \quad F\in\dom(T), \lb{2.32}
\\
&(T-\la_j E_r)(T-\la_j E_r)^{-1}F=F, \quad F\in E_r L^2_A(\bbR)^{2m},   \lb{2.33}
\end{align}
and hence
\begin{align}
(T-\la_0 E_r)(T-\la_1 E_r)^{-1}F = F + (\la_1-\la_0) E_r (T-\la_1 E_r)^{-1}F, \quad F\in E_r L^2_A(\bbR)^{2m}.   \lb{2.34}
\end{align}
Since $E_r L^2_A(\bbR)^{2m}=\wha E_r^* L^2_W(\bbR)^r$, it follows from \eqref{2.34} that the operator
\begin{align}
S=\wha E_r (T-\la_0 E_r)(T-\la_1 E_r)^{-1} \wha E_r^*,
\end{align}
is bounded on $L^2_W(\bbR)^r$. In addition, it follows from \eqref{2.34}, \eqref{2.31}, and $\wha E_r\wha E_r^* = I_{L^2_W(\bbR)^r}$ that
\begin{align}
\wha E_r (T-\la_1 E_r)(T-\la_0 E_r)^{-1} \wha E_r^*
&= \wha E_r\big[ I_{L^2_A(\bbR)^{2m}}-(\la_1-\la_0)(T-\la_0 E_r)^{-1}\big] \wha E_r^* \no
\\
&= I_{L^2_W(\bbR)^r}-(\la_1-\la_0)R(\la_0).     \lb{2.36}
\end{align}
Using \eqref{2.32}, \eqref{2.33}, and $\wha E_r^*\wha E_r T = E_r T = T$ on $\dom(T)$, one computes
\begin{align}
&(\la_0-\la_1)S\big[R(\la_0)-(\la_1-\la_0)^{-1}I_{L^2_W(\bbR)^r}\big]
= S\big[I_{L^2_W(\bbR)^r}-(\la_1-\la_0)R(\la_0)\big] \no
\\
&\quad = \wha E_r (T-\la_0 E_r)(T-\la_1 E_r)^{-1} \wha E_r^*\wha E_r (T-\la_1 E_r)(T-\la_0 E_r)^{-1} \wha E_r^*
\\
&\quad = \wha E_r (T-\la_0 E_r)(T-\la_1 E_r)^{-1} (T-\la_1 E_r)(T-\la_0 E_r)^{-1} \wha E_r^* = \wha E_r\wha E_r^* = I_{L^2_W(\bbR)^r},    \no
\end{align}
and similarly
\begin{align}
&\big[R(\la_0)-(\la_1-\la_0)^{-1}I_{L^2_W(\bbR)^r}\big](\la_0-\la_1)S = I_{L^2_W(\bbR)^r}.
\end{align}
Thus, $R(\la_0)-(\la_1-\la_0)^{-1}I_{L^2_W(\bbR)^r}$ is invertible and hence $(\la_1-\la_0)^{-1}\notin\si(R(\la_0))$.

Conversely, suppose $(\la_1-\la_0)^{-1}\in\bbC\bs\si(R(\la_0))$. As before one has \eqref{2.32}, \eqref{2.33} for $j=0$ and \eqref{2.36}. Let $S$ denote the inverse of $R(\la_0)-(\la_1-\la_0)^{-1}I_{L^2_W(\bbR)^r}$, then
\begin{align}
I_{L^2_W(\bbR)^r} &= S\big[R(\la_0)-(\la_1-\la_0)^{-1}I_{L^2_W(\bbR)^r}\big]  \no
\\
&= (\la_0-\la_1)^{-1} S \wha E_r (T-\la_1 E_r)(T-\la_0 E_r)^{-1} \wha E_r^*.
\end{align}
Applying both sides to the function $\wha E_r (T-\la_0 E_r)F \in L^2_W(\bbR)^r$, $F\in\dom(T)$, and recalling that $\wha E_r^*\wha E_r T = E_r T = T$ on $\dom(T)$, then yield via \eqref{2.32},
\begin{align}
(\la_0-\la_1)^{-1} S \wha E_r (T-\la_1 E_r)F = \wha E_r (T-\la_0 E_r)F, \quad F\in\dom(T).
\end{align}
Applying $(T-\la_0)^{-1}\wha E^*$ to both sides then similarly yields
\begin{align}
(\la_0-\la_1)^{-1} (T-\la_0)^{-1}\wha E^* S \wha E_r (T-\la_1 E_r)F = F, \quad F\in\dom(T).
\end{align}
An analogous computation also yields
\begin{align}
(T-\la_1 E_r) (\la_0-\la_1)^{-1} (T-\la_0)^{-1}\wha E^* S \wha E_r F = F, \quad F\in E_r L^2_A(\bbR)^{2m}.
\end{align}
Since $(\la_0-\la_1)^{-1} (T-\la_0)^{-1}\wha E^* S \wha E_r$ is a bounded operator from $E_r L^2_A(\bbR)^{2m}$ to $\dom(T)$, $T-\la_1 E_r$ has a bounded inverse, and hence, $\la_1\notin\si(T)$.
\end{proof}

Returning to the Hamiltonian system \eqref{2.10}, one recalls (cf., e.g.,
\cite{HS81}) that for $z \in \bbC \bs \bbR$ and a fixed reference point
$x_0 \in (a,b)$ {\it Weyl--Titchmarsh solutions}
$\Psi_{-, \al}(z,\dott,x_0) \in \bbC^{2m \times m}$ if $- \infty < a$,
$\Psi_{+, \be}(z,\dott,x_0) \in \bbC^{2m \times m}$ if $b < \infty$, and
$\Psi_{+}(z,\dott,x_0) \in \bbC^{2m \times m}$ if $b = \infty$,
$\Psi_{-}(z,\dott,x_0) \in \bbC^{2m \times m}$ if $a = - \infty$
of \eqref{2.10} are defined as follows: If $- \infty < a$, $\Psi_{-, \al}(z,\dott,x_0)$ satisfies the $\al$-boundary condition at $x=a$,
\begin{equation}
\al^* J \Psi_{-,\al}(z, a,x_0) = 0.
\end{equation}
Similarly, if $b < \infty$, $\Psi_{+, \be}(z,\dott,x_0)$ satisfies the
$\be$-boundary condition at $x=b$,
\begin{equation}
\be^* J \Psi_{+,\be}(z, b,x_0) = 0.
\end{equation}
If $b = \infty$, $\Psi_{+}(z,\dott,x_0)$ satisfies for all $c \in (a, \infty)$,
\begin{equation}
\Psi_{+}(z,\dott,x_0) \in L^2_A((c,\infty))^{2m},
\end{equation}
and if $a = - \infty$, $\Psi_{-}(z,\dott,x_0)$ satisfies for all $c \in (- \infty, b)$,
\begin{equation}
\Psi_{-}(z,\dott,x_0) \in L^2_A((- \infty,c))^{2m}.
\end{equation}

The actual choice of reference point is immaterial for the discussion in the remainder of this paper, but since Weyl--Titchmarsh solutions and matrices explicitly depend on it, we decided to indicate that explicitly in our choice of notation.

The normalization of each Weyl--Titchmarsh solution is fixed by
\begin{align}
\Psi_{-,\al}(z,\dott,x_0) &= U(z,\dott,x_0) (I_m \;\; M_{-,\al}(z,x_0))^\top,   \lb{2.47} \\
\Psi_{+, \be}(z,\dott,x_0) &= U(z,\dott,x_0) (I_m \;\; M_{+,\be}(z,x_0))^\top,    \lb{2.48} \\
\Psi_{\pm}(z,\dott,x_0) &= U(z,\dott,x_0) (I_m \;\; M_{\pm}(z,x_0))^\top,    \lb{2.49}
\end{align}
where $U(z,\dott,x_0)$ is a fundamental system of solutions of \eqref{2.10}
normalized by
\begin{equation}
U(z,x_0,x_0) = I_{2m},    \lb{2.50}
\end{equation}
and $M_{-,\al}(\dott,x_0)$, $M_{+,\be}(\dott,x_0)$, and
$M_{\pm}(\dott,x_0)$ are the
{\it Weyl--Titchmarsh functions}, in particular, $-M_{-,\al}(\dott,x_0)$,
$M_{+,\be}(\dott,x_0)$, and $\pm M_{\pm}(\dott,x_0)$  are all
$m \times m$ Nevanlinna-Herglotz matrices of full rank (i.e., analytic on the open upper half-plane, $\bbC_+$, with positive definite imaginary part on $\bbC_+$).
The Weyl--Titchmarsh solutions as well as functions extend analytically to all
$\bbC \bs \si(T_{a,b})$ (resp.,
$\bbC \bs \si(T_a)$ or $\bbC \bs \si(T)$).
In particular, if $- \infty < a < b < \infty$, $M_{-,\al}(\dott,x_0)$ and
$M_{+,\be}(\dott,x_0)$ are meromorphic.

In the following, we will call a solution $\Psi (\la,\dott) \in \bbC^{2m \times m}$, $\la \in \bbR$, of \eqref{2.10} {\it nondegenerate}, if for some (and hence for all) $x \in (a,b)$,
\begin{equation}
\Psi(\la,x)^* \Psi(\la, x) > 0, \quad \Psi(\la,x)^* J \Psi(\la,x) = 0.
\lb{2.51}
\end{equation}
(The first condition extends to all $x$ due to unique solvability of \eqref{2.10} and the second due to the Wronskian relation \eqref{2.20} in the special case $z_1=z_2=\la \in \bbR$, $D_j =0$, $j=1,2$.) Clearly, $\Psi_{-, \al}(\la,\dott,x_0)$,
$\Psi_{+, \be}(\la,\dott,x_0)$, $\Psi_{\pm}(\la,\dott,x_0)$ are all nondegenerate upon checking the conditions \eqref{2.51} at $x=x_0$.

We conclude this introductory section with two auxiliary results on nondegenerate solutions:

\begin{lemma} \lb{l2.4}
Assume Hypothesis \ref{h2.2}. Suppose $\Psi \in AC_\loc((a,b))^{2m \times m}$ is a nondegenerate solution of $\tau \Psi = \la E_r \Psi$, $\la\in\bbR$. Then there exist
$\te, \rho \in \bbC^{m \times m}$ satisfying
\begin{equation}
\te(x) = \te(x)^*, \quad \rho(x)^* \rho(x) > 0, \quad x \in (a,b),   \lb{2.52}
\end{equation}
such that
\begin{equation}
\Psi(x) = \big(\sin(\te(x)) \;\; \cos(\te(x))\big)^{\top} \rho(x), \quad x \in (a,b).  \lb{2.53}
\end{equation}
\end{lemma}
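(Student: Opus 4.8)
The plan is to think of $\Psi$ row-block-wise. Write $\Psi = \binom{\Psi_1}{\Psi_2}$ with $\Psi_1, \Psi_2 \in \bbC^{m\times m}$, so that the two conditions in \eqref{2.51} become $\Psi_1^* \Psi_1 + \Psi_2^* \Psi_2 > 0$ and $\Psi_1^* \Psi_2 = \Psi_2^* \Psi_1$ (the latter coming from $\Psi^* J \Psi = \Psi_1^* \Psi_2 - \Psi_2^* \Psi_1 = 0$, i.e.\ $\Psi_1^* \Psi_2$ is self-adjoint). The idea is then to produce a ``polar-type'' factorization in which the $\bbC^{m\times m}$ matrix $\big(\sin(\te) + i\cos(\te)\big) = e^{i(\pi/2 - \te)}$ plays the role of the ``phase'' and $\rho$ the ``modulus.'' Concretely, I would set $Z(x) := \Psi_1(x) + i \Psi_2(x) \in \bbC^{m \times m}$ for each fixed $x$, and observe that $Z^* Z = \Psi_1^* \Psi_1 + \Psi_2^* \Psi_2 + i(\Psi_1^* \Psi_2 - \Psi_2^* \Psi_1) = \Psi_1^* \Psi_1 + \Psi_2^* \Psi_2 > 0$ by the two conditions. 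Hence $Z(x)$ is invertible for every $x \in (a,b)$, and the polar decomposition gives $Z(x) = V(x) |Z(x)|$ with $V(x) \in \bbC^{m\times m}$ unitary and $|Z(x)| = (Z^* Z)^{1/2} > 0$.

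Next I would extract the self-adjoint angle. Since $V(x)$ is unitary, write $V(x) = e^{i(\pi/2 - \te(x))}$ for a self-adjoint $\te(x) = \te(x)^*$ — this is possible because every unitary matrix is the exponential of $i$ times a self-adjoint matrix (e.g.\ via the spectral theorem, or by taking a matrix logarithm), and the shift by $\pi/2$ is just to align with the $(\sin,\cos)$ convention of the statement. Then $e^{i(\pi/2 - \te)} = \sin(\te) + i \cos(\te)$ using the functional calculus identities $\cos(\pi/2 - \te) = \sin(\te)$ and $\sin(\pi/2 - \te) = \cos(\te)$, valid for self-adjoint $\te$ since these hold on the real spectrum. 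Setting $\rho(x) := |Z(x)| = (Z(x)^* Z(x))^{1/2}$, we get $\rho(x)^* \rho(x) = Z^* Z > 0$, and $Z = (\sin\te + i\cos\te)\rho$ means $\Psi_1 = \sin(\te)\rho$ and $\Psi_2 = \cos(\te)\rho$ upon taking real and imaginary parts (here one uses that $\rho$, $\sin\te$, $\cos\te$ are all Hermitian, so $\sin(\te)\rho$ need not itself be Hermitian but its real/imaginary decomposition is governed entirely by the scalar $i$). That is exactly \eqref{2.53}: $\Psi = \big(\sin(\te)\;\;\cos(\te)\big)^\top \rho$. The local absolute continuity / regularity of $\te$ and $\rho$ is not asserted in the statement (only their pointwise algebraic properties), so I would not need to track smoothness, though it follows from continuity of the polar decomposition away from singularities if desired.

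The main obstacle I anticipate is the non-uniqueness and well-definedness of $\te$: the matrix logarithm of a unitary is only defined modulo $2\pi\bbZ$ in each eigenvalue, and one must make a choice (e.g.\ eigenvalues of $\te$ in a fixed fundamental domain such as $[0,2\pi)$ or $(-\pi,\pi]$) to get a genuine self-adjoint matrix; the statement only requires existence, so any measurable selection suffices and there is no real difficulty, but it should be stated cleanly. A secondary point to verify carefully is that the three claimed properties in \eqref{2.52} hold simultaneously for the constructed $\te,\rho$ — self-adjointness of $\te$ is automatic from the construction, positivity of $\rho^*\rho$ follows from $Z^*Z > 0$, and one must double-check the trigonometric identities in the matrix functional calculus, which reduce to the scalar identities on $\spec(\te) \subset \bbR$. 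Finally, I would record that the construction can be carried out at a single point $x_0$ and extended, or pointwise for all $x$, consistently with the phrase ``for some (and hence for all) $x$'' in the nondegeneracy definition — though since \eqref{2.53} is asserted for all $x \in (a,b)$, the cleanest route is simply the pointwise construction above applied at each $x$.
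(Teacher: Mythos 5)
Your construction breaks down at the very last step, where you pass from the single matrix identity $\Psi_1+i\Psi_2=\sin(\te)\rho+i\cos(\te)\rho$ to the two separate identities $\Psi_1=\sin(\te)\rho$ and $\Psi_2=\cos(\te)\rho$ ``by taking real and imaginary parts.'' No such splitting is available: $\Psi_1$ and $\Psi_2$ are arbitrary complex matrices (not Hermitian), and $\sin(\te)\rho$, $\cos(\te)\rho$ are products of Hermitian matrices that need not commute, hence are not Hermitian either, so one complex matrix equation cannot be separated into two. In fact the conclusion is false for your choice of $\rho$. Already for $m=1$, take a point where $\Psi_1=i$, $\Psi_2=0$ (a perfectly admissible nondegenerate value: $|\Psi_1|^2+|\Psi_2|^2=1>0$ and $\ol{\Psi_2}\Psi_1-\ol{\Psi_1}\Psi_2=0$). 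Then $Z=i$, $|Z|=1$, $V=i=e^{i\pi/2}$, so your prescription forces $\te=0$ and gives $\sin(\te)\rho=0\neq i=\Psi_1$. The correct decomposition at this point is $\te=\pi/2$, $\rho=i$; note that $\rho$ is unitary but not positive. This is the structural reason the lemma only demands $\rho^*\rho>0$ (invertibility) rather than $\rho>0$: a unitary factor common to $\Psi_1$ and $\Psi_2$ must be absorbed into $\rho$, whereas the polar decomposition of $Z=\Psi_1+i\Psi_2$ forces $\rho=|Z|>0$ and pushes that common factor into the phase $V$, where it corrupts $\te$.

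The paper's proof is designed precisely to cancel this common factor: it forms both $V_\pm=(\pm I_m\;\;iI_m)\Psi=\pm\Psi_1+i\Psi_2$, shows $V_\pm^*V_\pm=\Psi^*\Psi>0$, and takes the \emph{quotient} $U=V_-V_+^{-1}$, which is unitary and independent of any common right factor of $\Psi_1,\Psi_2$. Writing $U=e^{2i\te}$ then yields the linear relation $\cos(\te)\Psi_1=\sin(\te)\Psi_2$, and it is this relation --- not a real/imaginary split --- that justifies defining $\rho=\sin(\te)\Psi_1+\cos(\te)\Psi_2$ and verifying $\Psi_1=\sin(\te)\rho$, $\Psi_2=\cos(\te)\rho$ via $\sin(\te)^2+\cos(\te)^2=I_m$. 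If you wish to retain your polar-decomposition language, you must apply it to the quotient $V_-V_+^{-1}$ (equivalently, extract $\te$ from that unitary), not to $Z=V_+$ alone.
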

\begin{proof}
Since $\Psi(x)$ is nondegenerate, one has $\Psi(x)^* \Psi(x) > 0$ and $\Psi(x)^* J \Psi(x) = 0$.
Introducing
\begin{equation}
V_{\pm}(x) = (\pm I_m \;\; i I_m) \Psi(x) \in \bbC^{m \times m}, \quad x \in (a,b),
\end{equation}
one then infers
\begin{equation}
V_{\pm}(x)^* V_{\pm}(x) = \Psi(x)^* (I_{2m} \mp i J)\Psi(x) = \Psi(x)^* \Psi(x) >0.
\end{equation}
In particular, $\|V_+(x) h\|_{\bbC^m} = \|V_-(x) h\|_{\bbC^m} > 0$ for all $h \in \bbC^m \bs \{0\}$ therefore $V_{\pm}(x)$ are invertible and hence $U(x)=V_-(x)V_+(x)^{-1} \in \bbC^{m \times m}$ is unitary. Let $\te(x) = \te(x)^* \in \bbC^{m \times m}$ be such that,
\begin{equation}
U(x) = e^{2i \te(x)}, \quad x \in (a,b).
\end{equation}
Then
$e^{i \te} V_+ = e^{- i \te} V_-$ and hence
$(e^{i \te} \;\; ie^{i \te}) \Psi = (-e^{-i\te} \;\; i e^{-i\te}) \Psi$
implying
\begin{equation}
(\cos(\te) \;\; -\sin(\te)) \Psi = 0.
\end{equation}
Defining
\begin{equation}
\rho (x) = \big(\sin(\te(x)) \;\; \cos(\te(x))\big) \Psi(x), \quad x \in (a,b),
\end{equation}
and denoting
\begin{equation}
\Psi = (\Psi_1 \;\; \Psi_2)^{\top},
\end{equation}
one infers $\cos(\te) \Psi_1 = \sin(\te) \Psi_2$ and hence,
\begin{equation}
\sin(\te) \rho = [\sin(\te)]^2 \Psi_1 + \sin(\te) \cos(\te) \Psi_2
= [\sin(\te)]^2 \Psi_1 + [\cos(\te)]^2 \Psi_1 = \Psi_1
\end{equation}
and
\begin{equation}
\cos(\te) \rho = \cos(\te) \sin(\te) \Psi_1 + [\cos(\te)]^2 \Psi_2
= [\sin(\te)]^2 \Psi_2 + [\cos(\te)]^2 \Psi_2 = \Psi_2.
\end{equation}
Here we used the fact that $\theta = \theta^*$, and hence,
$\sin(\theta)$ and $\cos(\theta)$ commute. Thus, $\Psi = (\sin(\te) \;\; \cos(\te))^{\top} \rho$, implying
\begin{equation}
0 < \Psi^* \Psi = \rho^* \big(\sin(\te) \;\; \cos(\te)\big)
\big(\sin(\te) \;\; \cos(\te)\big)^\top \rho = \rho^* \rho.
\end{equation}
\end{proof}

\begin{lemma}\lb{l2.5}
Assume Hypothesis \ref{h2.2}. Suppose $\Psi \in AC_\loc((a,b))^{2m \times m}$ is a nondegenerate solution of $\tau \Psi = \la E_r \Psi$, $\la\in\bbR$. Then $\Psi$ satisfies the following analog of \eqref{2.19}
\begin{align}\lb{2.70}
\Psi [\Psi^*\Psi]^{-1}\Psi^* - J\Psi [\Psi^*\Psi]^{-1}\Psi^*J = I_{2m}.
\end{align}
\end{lemma}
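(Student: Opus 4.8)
The plan is to reduce \eqref{2.70} to the already-established identity \eqref{2.19} for boundary-condition matrices by exhibiting $\Psi[\Psi^*\Psi]^{-1/2}$ as such a matrix $\al$, up to the ordering of blocks. Concretely, set $\al := \Psi(x)[\Psi(x)^*\Psi(x)]^{-1/2} \in \bbC^{2m\times m}$. This is well defined since $\Psi^*\Psi>0$ by nondegeneracy, and one checks directly that $\al^*\al = [\Psi^*\Psi]^{-1/2}\Psi^*\Psi[\Psi^*\Psi]^{-1/2} = I_m$ and $\al^* J \al = [\Psi^*\Psi]^{-1/2}(\Psi^* J \Psi)[\Psi^*\Psi]^{-1/2} = 0_m$, again using the two conditions in \eqref{2.51}. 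These are precisely the first two of the three equivalent conditions in \eqref{2.19}, and since \eqref{2.19} is stated to be equivalent to \eqref{2.17}, the matrix $\al$ satisfies \eqref{2.17}.

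Next I would apply the third identity in \eqref{2.19}, namely $\al\al^* - J\al\al^* J = I_{2m}$, to this particular $\al$. Since $[\Psi^*\Psi]^{-1/2}$ is self-adjoint, $\al\al^* = \Psi[\Psi^*\Psi]^{-1}\Psi^*$, and therefore
\begin{equation}
\Psi[\Psi^*\Psi]^{-1}\Psi^* - J\Psi[\Psi^*\Psi]^{-1}\Psi^* J = \al\al^* - J\al\al^* J = I_{2m},
\end{equation}
which is exactly \eqref{2.70}. The $x$-independence of the validity of \eqref{2.70} follows from the fact, noted after \eqref{2.51}, that the two nondegeneracy conditions hold for all $x\in(a,b)$ once they hold for one; hence $\al=\al(x)$ satisfies \eqref{2.19} for every $x$.

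The only subtlety — and it is minor — is whether the equivalence of \eqref{2.17} and \eqref{2.19} that the paper invokes genuinely applies to a rectangular $\al\in\bbC^{2m\times m}$ built this way, i.e.\ whether the block structure $\al=(\al_1\;\;\al_2)^\top$ matters. It does not: the chain \eqref{2.17}$\Leftrightarrow$\eqref{2.18}$\Leftrightarrow$\eqref{2.19} is a purely algebraic statement about any $\al\in\bbC^{2m\times m}$ with $J$ as in \eqref{2.1}, and the partition into $\al_1,\al_2\in\bbC^{m\times m}$ is merely notational. I expect no real obstacle here; the entire proof is the three-line verification above, and one could even bypass \eqref{2.19} by repeating its short linear-algebra proof with $\al\al^*$ replaced by the projection $\Psi[\Psi^*\Psi]^{-1}\Psi^*$ directly, but invoking \eqref{2.19} is cleaner.
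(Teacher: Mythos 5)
Your proof is correct and is essentially the paper's argument in disguise: both rest on the single fact that a square matrix with orthonormal columns (equivalently, a one-sided inverse) is unitary (invertible from both sides) — you package this via $\al=\Psi[\Psi^*\Psi]^{-1/2}$ and the unitarity of $(\al\;\;J\al)$, while the paper works directly with the pair $\big(\Psi\;\;J\Psi\big)^*$ and $\big(\Psi[\Psi^*\Psi]^{-1}\;\;J\Psi[\Psi^*\Psi]^{-1}\big)$, avoiding the square root. The one point worth making explicit is that you verify only the first two identities in \eqref{2.19} and then invoke the stated equivalence with \eqref{2.17} to obtain the third; this is legitimate because those two identities already show that the columns of the square matrix $(\al\;\;J\al)$ are orthonormal, whence it is unitary and the third identity follows.
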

\begin{proof}
Let $A=\Psi^*\Psi$. Then by \eqref{2.51},
\begin{align}
\big(\Psi \;\; -J\Psi\big)^*\big(\Psi A^{-1} \;\; J\Psi A^{-1}\big) = I_{2m}.
\end{align}
Since $\big(\Psi \;\; -J\Psi\big)$ and $\big(\Psi A^{-1} \;\; J\Psi A^{-1}\big)$ are finite-dimensional square matrices, we also have
\begin{align}
\big(\Psi A^{-1} \;\; J\Psi A^{-1}\big)\big(\Psi \;\; -J\Psi\big)^* = I_{2m},
\end{align}
which is \eqref{2.70}.
\end{proof}

\section{The Half-Line Case $[a,\infty)$} \label{s3}

In this section we consider the half-line case $[a,\infty)$, $-\infty < a < b = \infty$.
The compact interval case $[a,b]$ is analogous upon consistently replacing
$T_a$ by $T_{a,b}$ below. For this reason we keep the notation $b$ even though $b=\infty$ in this section.

\begin{hypothesis} \lb{h3.1}
Fix $x_0,\la_0,\la_1\in\bbR$, $\la_0<\la_1$, and assume the Weyl--Titchmarsh solutions $\Psi_+(\la_0,\dott,x_0)$ and $\Psi_{-,\al}(\la_1,\dott,x_0)$ are well-defined.
In addition, for $c\in(a,b)$ define
\begin{equation}
\ga := \ga(\la_0,c,x_0)
= \big(\sin(\te_+(\la_0,c,x_0)) \;\; \cos(\te_+(\la_0,c,x_0))\big)^{\top} \in \bbC^{2m\times m},  \lb{3.1}
\end{equation}
$($satisfying \eqref{2.17}, equivalently, \eqref{2.18}, \eqref{2.19}$)$, where $\te_+(\la_0,\dott,x_0)$ is the Pr\"ufer angle of the Weyl--Titchmarsh solution $\Psi_+(\la_0,\dott,x_0)$ introduced in Lemma~\ref{l2.4}.
\end{hypothesis}

For the purpose of restricting \eqref{2.10} to the interval $(a,c)$ we now introduce the orthogonal projection operator in $L^2_A((a,b))^{2m}$,
\begin{equation}
(P_c f)(x) := \begin{cases} f(x), & x \in (a,c), \\ 0, & x \in [c,b), \end{cases}
\quad f \in L^2_A((a,b))^{2m}.
\end{equation}
With a slight abuse of notation, we will denote the analogous projection operator in the space $L^2_W((a,b))^r$ by the same symbol $P_c$.
The operator associated with \eqref{2.10} restricted to the interval $(a,c)$ will be denoted by $T_{a,c}$ with $\al$ and $\ga$ (cf., \eqref{3.1}) defining the boundary conditions at $x=a$ and $x=c$, respectively. Equivalently, by \eqref{3.1}, $G \in \dom(T_{a,c})$ satisfies the boundary condition at $x=c$ of the form
\begin{equation}
\Psi_+(\la_0,c,x_0)^* J G(c) = 0.   \lb{3.4}
\end{equation}
Also the boundary condition $\al^* J G(a) = 0$ at $x=a$ can be restated in terms of the Weyl--Titchmarsh solution $\Psi_{-,\al}(\la,\dott,x_0)$ for any $\la \in \bbR$ for which $\Psi_{-,\al}(\la,\dott,x_0)$ is well-defined. Let $\rho_-(\la,a,x_0) = \al^* \Psi_{-,\al}(\la,a,x_0)$. Then, using \eqref{2.19} and $\al^* J \Psi_{-,\al}(\la,a,x_0) = 0$, one obtains
\begin{equation}
\al \rho_-(\la,a,x_0) = \al \al^* \Psi_{-,\al}(\la,a,x_0)
= (I_{2m} + J \al \al^* J) \Psi_{-,\al}(\la,a,x_0) = \Psi_{-,\al}(\la,a,x_0).
\end{equation}
Since $\Psi_{-,\al}(\la,\dott,x_0)$ is nondegenerate, it follows that
$\rho_-(\la,a,x_0)$ is invertible and hence $\al = \Psi_{-,\al}(\la,a,x_0) \rho_-(\la,a,x_0)^{-1}$. Thus, $G \in \dom(T_{a,c})$ satisfies the boundary condition at $x=a$ of the form
\begin{equation} \lb{3.5}
\Psi_{-,\al}(\la,a,x_0)^* J G(a) = 0.
\end{equation}

Next, we recall the structure of the resolvent and Green's function of $T_a$,
\begin{align}
\begin{split}
\big((T_a - z E_r)^{-1} G\big)(x) =
\int_a^b dx' \, K_a(z,x,x') A(x') G(x'),&
\\
z\in\bb C\bs\si(T_a),\; G \in E_r L^2_A((a,b))^{2m},&    \lb{3.6}
\end{split}
\end{align}
where
\begin{align}
K_a(z,x,x') =
\begin{cases}
\Psi_{-,\al}(z,x,x_0) \cW(z)^{-1}\Psi_+(\ol{z},x',x_0)^*, & x < x', \\
\Psi_+(z,x,x_0) \cW(z)^{-1} \Psi_{-,\al}(\ol{z},x',x_0)^*, & x > x',
\end{cases}    \lb{3.7}
\end{align}
and $\cW(z)$ is the ($x$-independent) Wronskian
\begin{align}
\begin{split}
\cW(z) &= -\Psi_+(\ol{z},\dott,x_0)^* J \Psi_{-,\al}(z,\dott,x_0) \\
&= \Psi_{-,\al}(\ol{z},\dott,x_0)^* J \Psi_+(z,\dott,x_0)
= M_{-,\al}(z,x_0) - M_+(z,x_0).   \lb{3.8}
\end{split}
\end{align}
The resolvents of $T_{a,b}$ and $T$ are given by analogous formulas. To see that the right-hand side of \eqref{3.6} is the inverse of $T_a-z E_r$, one first notes that
\begin{align}
\Psi_{-,\al}(\ol{z},x,x_0)^* J \Psi_{-,\al}(z,x,x_0)
&= (I_m \;\; M_{-,\al}(\ol{z},x_0)^*) J (I_m \;\; M_{-,\al}(z,x_0))^\top,   \no
\\
&= (I_m \;\; M_{-,\al}(z,x_0)) J (I_m \;\; M_{-,\al}(z,x_0))^\top = 0_m,
\\
\Psi_+(\ol{z},x,x_0)^* J \Psi_+(z,x,x_0) &=
(I_m \;\; M_+(z,x_0)) J (I_m \;\; M_+(z,x_0))^\top = 0_m,   \no
\end{align}
and hence, by \eqref{3.8},
\begin{align}
\binom{-\Psi_+(\ol{z},x,x_0)^*}{\Psi_{-,\al}(\ol{z},x,x_0)^*} J \big(\Psi_{-,\al}(z,x,x_0)\cW(z)^{-1} \;\; \Psi_+(z,x,x_0)\cW(z)^{-1}\big) = I_{2m}.
\end{align}
Since for a square matrix the left inverse equals the right inverse, it follows that
\begin{align}
&\big(\Psi_{-,\al}(z,x,x_0)\cW(z)^{-1} \;\; \Psi_+(z,x,x_0)\cW(z)^{-1}\big) \binom{-\Psi_+(\ol{z},x,x_0)^*}{\Psi_{-,\al}(\ol{z},x,x_0)^*}       \lb{3.9}
\\
&\quad = \Psi_+(z,x,x_0)\cW(z)^{-1}\Psi_{-,\al}(\ol{z},x,x_0)^*
- \Psi_{-,\al}(z,x,x_0)\cW(z)^{-1}\Psi_+(\ol{z},x,x_0)^* = J^{-1}.  \no
\end{align}
Then, using $(\tau-zE_r)\Psi_+=0$ and $(\tau-zE_r)\Psi_{-,\al}=0$, one verifies
\begin{align}
&(T_a-z E_r) \int_a^b dx' \, K_a(z,x,x') A(x')G(x')    \no
\\
&\quad = C(x)J\big[\Psi_+(z,x,x_0)\cW(z)^{-1}\Psi_{-,\al}(\ol{z},x,x_0)^*
\\
&\hspace{19mm} -
\Psi_{-,\al}(z,x,x_0)\cW(z)^{-1}\Psi_+(\ol{z},x,x_0)^*\big]A(x)G(x)   \no
\\
&\quad = C(x)JJ^{-1}A(x)G(x) = E_r G(x) = G(x), \quad G \in E_r L^2_A((a,b))^{2m}. \no
\end{align}

\begin{lemma} \lb{l3.1}
Assume Hypotheses \ref{h2.2}, \ref{h3.1}, and $\la_0 \in \bbR \bs \si(T_a)$. Then $\la_0\notin\si(T_{a,c})$ and
\begin{align} \lb{3.10}
\big((T_a - \la_0 E_r)^{-1} P_c G\big)\big|_{(a,c)} =
(T_{a,c} - \la_0 E_r)^{-1}(G\vert_{(a,c)}),
\quad G \in E_r L^2_A((a,b))^{2m}.
\end{align}
\end{lemma}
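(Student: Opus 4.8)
The plan is to show that the Weyl--Titchmarsh solution $\Psi_+(\la_0,\dott,x_0)$, which controls the right endpoint boundary condition of $T_{a,c}$ via \eqref{3.1} and \eqref{3.4}, continues to a solution that is $L^2_A$ near $\infty$, so that the restriction of $\Psi_+(\la_0,\dott,x_0)$ to $(a,c)$ is precisely the solution satisfying the $\ga$-boundary condition at $c$, while $\Psi_{-,\al}(\la_0,\dott,x_0)$ remains the solution satisfying the $\al$-boundary condition at $a$. The key point is that the resolvent $(T_{a,c}-\la_0 E_r)^{-1}$ has a Green's function of exactly the same form \eqref{3.7}, built from the same two solutions, since the boundary condition data at $a$ and $c$ are literally the same solutions of the differential equation. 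Concretely, I would first note that since $\Psi_+(\la_0,\dott,x_0)$ satisfies \eqref{2.10} with $z=\la_0$ and the boundary condition \eqref{3.4} at $x=c$ by the very definition of $\ga$ in \eqref{3.1}, and since $\Psi_{-,\al}(\la_0,\dott,x_0)$ satisfies the $\al$-boundary condition at $a$, the operator $T_{a,c}$ has a nontrivial kernel at $\la_0$ if and only if $\Psi_+(\la_0,\dott,x_0)$ and $\Psi_{-,\al}(\la_0,\dott,x_0)$ share a common nonzero column direction on $(a,c)$.

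The main obstacle is showing $\la_0\notin\si(T_{a,c})$, i.e. that this Wronskian is nonsingular on $(a,c)$. For this I would use the hypothesis $\la_0\notin\si(T_a)$: by \eqref{3.8}, $\cW(\la_0)=M_{-,\al}(\la_0,x_0)-M_+(\la_0,x_0)$ is invertible (the Weyl solutions and functions extend analytically to $\bbC\bs\si(T_a)$, as recalled after \eqref{2.50}). The $x$-independent Wronskian relevant for $T_{a,c}$ is $\Psi_+(\la_0,\dott,x_0)^*J\Psi_{-,\al}(\la_0,\dott,x_0)$, and by \eqref{1.25}/\eqref{2.20} with $z_1=z_2=\la_0$ this is constant in $x$ on $(a,c)$; evaluating via \eqref{2.47}, \eqref{2.48} at any convenient point and using \eqref{3.8} it equals $-\cW(\la_0)^*$ up to a constant invertible factor coming from the normalizations, hence is invertible. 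This gives $\la_0\notin\si_p(T_{a,c})$, and since $\si(T_{a,c})=\si_d(T_{a,c})\subseteq\bbR$ is purely discrete (the compact interval case), $\la_0\notin\si(T_{a,c})$.

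Having established invertibility, I would construct the Green's function $K_{a,c}(\la_0,x,x')$ for $T_{a,c}$ by the same recipe as \eqref{3.6}--\eqref{3.7}, namely
\begin{align}
K_{a,c}(\la_0,x,x') = \begin{cases}
\Psi_{-,\al}(\la_0,x,x_0) \cW_c^{-1} \Psi_+(\la_0,x',x_0)^*, & x<x', \\
\Psi_+(\la_0,x,x_0) \cW_c^{-1} \Psi_{-,\al}(\la_0,x',x_0)^*, & x>x',
\end{cases}
\end{align}
with $\cW_c$ the ($x$-independent) Wronskian, and verify via the same computation following \eqref{3.9} (using $C(x)JJ^{-1}A(x)=E_r$ on the support and that both solutions solve the equation) that this reproduces $(T_{a,c}-\la_0 E_r)^{-1}$ acting on $E_r L^2_A((a,c))^{2m}$; the boundary conditions \eqref{3.4} at $c$ and \eqref{3.5} at $a$ are built in since $K_{a,c}$ uses exactly $\Psi_+(\la_0,\dott,x_0)$ and $\Psi_{-,\al}(\la_0,\dott,x_0)$. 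Finally, comparing this with the restriction of the full-line formula \eqref{3.6}--\eqref{3.7}: for $G\in E_r L^2_A((a,b))^{2m}$ and $x\in(a,c)$, the formula $\big((T_a-\la_0 E_r)^{-1}P_c G\big)(x)$ integrates $K_a(\la_0,x,x')A(x')G(x')$ only over $x'\in(a,c)$ (because of $P_c$), and on that region $K_a(\la_0,x,x')$ coincides with $K_{a,c}(\la_0,x,x')$ since the same two solutions and the same Wronskian appear. This yields \eqref{3.10}. The only subtlety to double-check is that the constant normalization factors relating $\cW(\la_0)$ and the $T_{a,c}$-Wronskian $\cW_c$ cancel correctly in the Green's function expression, which is routine given \eqref{2.47}--\eqref{2.50}.
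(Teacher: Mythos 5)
Your proposal is correct and follows essentially the same route as the paper's (much terser) proof: since $\ga$ is built from the Pr\"ufer angle of $\Psi_+(\la_0,\dott,x_0)$ at $c$, that solution satisfies the boundary condition \eqref{3.4} at $c$, so $(T_{a,c}-\la_0 E_r)^{-1}$ is given by the same Green's function \eqref{3.7} with the same solutions $\Psi_{-,\al}(\la_0,\dott,x_0)$, $\Psi_+(\la_0,\dott,x_0)$ and the same invertible Wronskian $\cW(\la_0)$, which yields \eqref{3.10}. The only slip is cosmetic: for real $\la_0$, \eqref{3.8} gives $\Psi_+(\la_0,\dott,x_0)^* J \Psi_{-,\al}(\la_0,\dott,x_0) = -\cW(\la_0)$ exactly, so no normalization constants need to be tracked.
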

\begin{proof}
Since $\Psi_+(\la_0,\dott,x_0)$ is a nondegenerate solution, it satisfies
\begin{equation}
\Psi_+(\la_0,c,x_0)^* J \Psi_+(\la_0,c,x_0) = 0,
\end{equation}
and hence, by \eqref{3.4}, $\Psi_+(\la_0,\dott,x_0)$ satisfies the boundary condition at $x=c$. Thus, $\Psi_+(\la_0,\dott,x_0)$ is also the Weyl--Titchmarsh solutions for $T_{a,c}$ and hence $(T_{a,c}-\la_0E_r)^{-1}$ is given by formulas completely analogous to \eqref{3.6}--\eqref{3.8} (employing the same
$\Psi_{-,\al}(\la_0,\dott,x_0)$, $\Psi_+(\la_0,\dott,x_0)$). This yields relation \eqref{3.10}.
\end{proof}

Introducing in $L^2_W((a,c))^r$ the restricted resolvent of $T_{a,c}$
(cf.\ \eqref{2.31}),
\begin{equation}
R_{a,c}(\la_0) := \wha E_r (T_{a,c} - \la_0 E_r)^{-1}
{\wha E_r}^*, \quad \la_0 \in \bbR \bs \si(T_a),   \lb{3.13}
\end{equation}
Lemma \ref{l3.1} can be rewritten as follows:

\begin{corollary} \lb{c3.2}
Assume Hypotheses \ref{h2.2}, \ref{h3.1}, and $\la_0 \in \bbR \bs \si(T_a)$. Then $\la_0\notin\si(T_{a,c})$ and
\begin{equation}
P_c R_a(\la_0) P_c = R_{a,c}(\la_0)\oplus0.
\end{equation}
\end{corollary}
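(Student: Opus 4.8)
The plan is to read off Corollary~\ref{c3.2} from Lemma~\ref{l3.1} by unwinding the definitions of the restricted resolvents; the only substance is the bookkeeping with the pointwise operators $\wha E_r$, $\wha E_r^*$ and the cut-off $P_c$. Since $\la_0\in\bbR\bs\si(T_a)$, the operator $R_a(\la_0)$ is a well-defined bounded operator on $L^2_W((a,b))^r$ by \eqref{2.31}, and Lemma~\ref{l3.1} already asserts $\la_0\notin\si(T_a,c)$ in the sense $\la_0\notin\si(T_{a,c})$, so $R_{a,c}(\la_0)$ as in \eqref{3.13} is likewise a well-defined bounded operator on $L^2_W((a,c))^r$; it remains to prove the operator identity.

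First I would record that both $\wha E_r\colon\bbC^{2m}\to\bbC^r$ (keeping the first $r$ components) and $\wha E_r^*\colon\bbC^r\to\bbC^{2m}$ (extension by zero) act pointwise in $x$, hence commute with multiplication by $\chi_{(a,c)}$; that is, $P_c\wha E_r=\wha E_r P_c$ and $\wha E_r^* P_c=P_c\wha E_r^*$, where each $P_c$ is understood as the appropriate one among the two cut-off projections on $L^2_A((a,b))^{2m}$ and on $L^2_W((a,b))^r$. Using this together with $R_a(\la_0)=\wha E_r(T_a-\la_0 E_r)^{-1}\wha E_r^*$, for arbitrary $f\in L^2_W((a,b))^r$ one obtains
\[
P_c R_a(\la_0) P_c f = \wha E_r\,P_c (T_a-\la_0 E_r)^{-1} P_c\big(\wha E_r^* f\big).
\]
Now set $G:=\wha E_r^* f$, which lies in $E_r L^2_A((a,b))^{2m}=\wha E_r^* L^2_W((a,b))^r$; then $P_c G = \wha E_r^*\big(f|_{(a,c)}\big)$ after the natural identification $L^2_W((a,b))^r\cong L^2_W((a,c))^r\oplus L^2_W((c,b))^r$, and the right-hand side above equals $\wha E_r$ applied to the zero-extension of $\big((T_a-\la_0 E_r)^{-1} P_c G\big)\big|_{(a,c)}$.

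At this point Lemma~\ref{l3.1} applies and replaces $\big((T_a-\la_0 E_r)^{-1} P_c G\big)\big|_{(a,c)}$ by $(T_{a,c}-\la_0 E_r)^{-1}\big(G|_{(a,c)}\big)=(T_{a,c}-\la_0 E_r)^{-1}\wha E_r^*\big(f|_{(a,c)}\big)$, using that $G|_{(a,c)}=\wha E_r^*\big(f|_{(a,c)}\big)$ since $\wha E_r^*$ is pointwise. Applying $\wha E_r$ then yields $R_{a,c}(\la_0)\big(f|_{(a,c)}\big)$ on $(a,c)$ and $0$ on $[c,b)$, which is exactly $\big(R_{a,c}(\la_0)\oplus 0\big)f$ under the above decomposition; hence $P_c R_a(\la_0) P_c = R_{a,c}(\la_0)\oplus 0$. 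The only obstacle here is purely notational, namely correctly distinguishing the several cut-off projections ($P_c$ on $L^2_A((a,b))^{2m}$, on $L^2_W((a,b))^r$, and the implicit restriction onto $L^2_W((a,c))^r$) and checking that $\wha E_r,\wha E_r^*$ genuinely commute with them; once that is in hand the corollary is an immediate restatement of Lemma~\ref{l3.1}.
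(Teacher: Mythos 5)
Your proposal is correct and follows exactly the route the paper intends: the paper offers no separate proof of Corollary~\ref{c3.2}, presenting it as an immediate restatement of Lemma~\ref{l3.1} obtained by conjugating with $\wha E_r$, $\wha E_r^*$ and using that these pointwise maps commute with the cut-off $P_c$. Your write-up simply makes that bookkeeping explicit, and it is accurate.
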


In the following, for a linear operator $S$ in an appropriate linear space we introduce the notation,
\begin{equation}
N(S) := \dim(\ker(S))
\end{equation}
($N(S)$ is also called the nullity, ${\rm nul} (S)$, of $S$), in addition, we will employ the
symbol $N((\la_0, \la_1); S)$ to denote the sum of geometric multiplicities of
all eigenvalues of $S$ in the interval $(\la_0, \la_1)$.

\begin{theorem} \lb{t3.4}
Assume Hypotheses \ref{h2.2}, \ref{h3.1}, and $\la_0 \in \bbR \bs \si(T_a)$. Then,
\begin{align}
N(T_{a,c} - \la_1 E_r)
&= N\big(R_{a,c}(\la_0) - (\la_1 - \la_0)^{-1}I_{L^2_W((a,c))^{r}}\big) \no \\
&= N\big(\Psi_+(\la_0,c,x_0)^* J \Psi_{-,\al}(\la_1,c,x_0)\big).    \lb{3.25}
\end{align}
\end{theorem}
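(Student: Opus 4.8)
The plan is to prove the two equalities in \eqref{3.25} separately. The first equality, $N(T_{a,c} - \la_1 E_r) = N\big(R_{a,c}(\la_0) - (\la_1 - \la_0)^{-1}I\big)$, is essentially the spectral mapping result already available: by Lemma~\ref{l3.1} we have $\la_0 \notin \si(T_{a,c})$, so $R_{a,c}(\la_0)$ is a well-defined bounded operator on $L^2_W((a,c))^r$, and Lemma~\ref{l2.3} (applied to $T_{a,c}$, $R_{a,c}$, which is legitimate since $(a,c)$ is a finite interval so this is the compact-interval version of that lemma) says precisely that the geometric multiplicity of $\la_1$ as an eigenvalue of $T_{a,c}$ equals the geometric multiplicity of $(\la_1-\la_0)^{-1}$ as an eigenvalue of $R_{a,c}(\la_0)$. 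Thus the first line of \eqref{3.25} follows immediately by unwinding the definition of $N(\cdot)$ as a nullity.

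The substance is in the second equality, $N(T_{a,c} - \la_1 E_r) = N\big(\Psi_+(\la_0,c,x_0)^* J \Psi_{-,\al}(\la_1,c,x_0)\big)$. First I would set up the natural correspondence between $\ker(T_{a,c} - \la_1 E_r)$ and a subspace of $\bbC^m$. Any $G \in \ker(T_{a,c} - \la_1 E_r)$ is a solution of $\tau \Psi = \la_1 E_r \Psi$ on $(a,c)$ satisfying the two boundary conditions $\al^* J G(a) = 0$ and $\Psi_+(\la_0,c,x_0)^* J G(c) = 0$ (the latter being the $c$-boundary condition \eqref{3.4} defining $T_{a,c}$). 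By the discussion around \eqref{3.5}, the condition $\al^* J G(a) = 0$ forces $G$ to lie in the column span of $\Psi_{-,\al}(\la_1,\dott,x_0)$; that is, $G(\cdot) = \Psi_{-,\al}(\la_1,\cdot,x_0) h$ for a unique $h \in \bbC^m$, and the map $G \mapsto h$ is a linear bijection onto $\bbC^m$ (here I use that $\Psi_{-,\al}(\la_1,\dott,x_0)$ is nondegenerate, so $\Psi_{-,\al}(\la_1,x,x_0)^* \Psi_{-,\al}(\la_1,x,x_0) > 0$, and the definiteness condition in Hypothesis~\ref{h2.2}(i) guarantees no nonzero such solution is annihilated in $L^2_A((a,c))^{2m}$, so in particular $G$ really lies in $\dom(T_{a,c})$). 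Under this bijection, the remaining $c$-boundary condition becomes $\Psi_+(\la_0,c,x_0)^* J \Psi_{-,\al}(\la_1,c,x_0) h = 0$, i.e.\ $h \in \ker\big(\Psi_+(\la_0,c,x_0)^* J \Psi_{-,\al}(\la_1,c,x_0)\big)$. Hence $\ker(T_{a,c} - \la_1 E_r)$ is linearly isomorphic to $\ker\big(\Psi_+(\la_0,c,x_0)^* J \Psi_{-,\al}(\la_1,c,x_0)\big)$, which gives the claimed equality of dimensions.

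I expect the main obstacle to be making the identification $G(\cdot) = \Psi_{-,\al}(\la_1,\cdot,x_0) h$ fully rigorous and verifying that such $G$ genuinely belongs to $\dom(T_{a,c})$ — one must check the integrability requirement $\tau G \in E_r L^2_A((a,c))^{2m}$, which on the finite interval $(a,c)$ is automatic since $\tau G = \la_1 E_r G$ and $G$ is continuous hence in $L^2_A((a,c))^{2m}$, but it is worth being careful that the boundary-trace manipulations using \eqref{2.19} are valid for the value $\la = \la_1$ (this needs $\Psi_{-,\al}(\la_1,\dott,x_0)$ to be well-defined, which is part of Hypothesis~\ref{h3.1}, and its nondegeneracy, which holds by checking \eqref{2.51} at $x_0$). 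A secondary point is confirming that $\Psi_+(\la_0,\dott,x_0)$, evaluated at $\la_0 \notin \si(T_a)$, really does supply the $c$-boundary condition for $T_{a,c}$ — but this is exactly what Lemma~\ref{l3.1} and its proof establish, so I would simply cite it. Once these bookkeeping items are dispatched, the counting argument is a short exercise in linear algebra.
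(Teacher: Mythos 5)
Your proposal is correct and follows essentially the same route as the paper: the first equality via Lemma~\ref{l2.3} applied to $T_{a,c}$, $R_{a,c}(\la_0)$, and the second by identifying $\ker(T_{a,c}-\la_1 E_r)$ with $\ker\big(\Psi_+(\la_0,c,x_0)^* J \Psi_{-,\al}(\la_1,c,x_0)\big)$ through $G=\Psi_{-,\al}(\la_1,\dott,x_0)h$ (the paper phrases this as two opposite inequalities rather than a single isomorphism). The one step you leave slightly informal --- that $\al^* J G(a)=0$ forces $G(a)$ into the column span of $\Psi_{-,\al}(\la_1,a,x_0)$ --- is exactly what the paper's Lemma~\ref{l2.5}, i.e.\ identity \eqref{2.70}, supplies, though your dimension count via nondegeneracy works equally well.
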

\begin{proof}
Applying Lemma~\ref{l2.3} to $T_{a,c}$, $R_{a,c}(\la_0)$ yields the first equality in \eqref{3.25}.

Next, suppose $N\big(\Psi_+(\la_0,c,x_0)^* J \Psi_{-,\al}(\la_1,c,x_0)\big)=n$ and let $\{v_j\}_{j=1}^n$ be a basis of the kernel of
$\Psi_+(\la_0,c,x_0)^* J \Psi_{-,\al}(\la_1,c,x_0)$. Then $F_j(x)=
\Psi_{-,\al}(\la_1,x,x_0) v_j$, $j=1,\dots,n$, are linearly independent elements of $\ker(T_{a,c} - \la_1 E_r)$ and hence,
\begin{equation}
N(T_{a,c} - \la_1 E_r)\geq N\big(\Psi_+(\la_0,c,x_0)^* J \Psi_{-,\al}(\la_1,c,x_0)\big).  \lb{3.27}
\end{equation}

Conversely, suppose $N(T_{a,c} - \la_1 E_r)=n$ and let $\{F_j(x)\}_{j=1}^n$ be a basis of the kernel of $T_{a,c} - \la_1 E_r$. Then the functions $F_j(x)$ satisfy the boundary condition \eqref{3.5} at $x=a$,
\begin{align}
\Psi_{-,\al}(\la_1,a,x_0)^*J F_j(a) = 0, \quad j=1,\dots,n.   \lb{3.28}
\end{align}
Applying $J\Psi_{-,\al}(\la_1,a,x_0) [\Psi_{-,\al}(\la_1,a,x_0)^*\Psi_{-,\al}(\la_1,a,x_0)]^{-1}$ to \eqref{3.28} and employing relation \eqref{2.70} with
$\Psi = \Psi_{-,\al}(\la_1,a,x_0)$ then yield
\begin{align}
F_j(a) = \Psi_{-,\al}(\la_1,a,x_0) v_j, \quad j=1,\dots,n,
\end{align}
where
\begin{align}
v_j= [\Psi_{-,\al}(\la_1,a,x_0)^*\Psi_{-,\al}(\la_1,a,x_0)]^{-1} \Psi_{-,\al}(\la_1,a,x_0)^* F_j(a), \quad j=1,\dots,n,
\end{align}
are linearly independent vectors in $\bbC^{m}$.
Since solutions of \eqref{2.10} are uniquely determined by their initial conditions, it follows that
\begin{align}
F_j(x)=\Psi_{-,\al}(\la_1,x,x_0) v_j, \quad x\in[a,c], \; j=1,\dots,n.
\end{align}
Moreover, since $F_j$ also satisfy the boundary condition \eqref{3.4} at
$x=c$, one concludes
\begin{align}
\Psi_+(\la_0,c,x_0)^* J \Psi_{-,\al}(\la_1,c,x_0) v_j = 0, \quad j=1,\dots,n,
\end{align}
that is,
\begin{equation}
N\big(\Psi_+(\la_0,c,x_0)^* J \Psi_{-,\al}(\la_1,c,x_0)\big)
\geq N(T_{a,c} - \la_1 E_r).   \lb{3.29}
\end{equation}
Inequalities \eqref{3.27} and \eqref{3.29} imply the second equality in \eqref{3.25}, concluding the proof.
\end{proof}

\begin{lemma} \lb{l3.5}
Assume Hypotheses \ref{h2.2}, \ref{h3.1}, and $\la_0 \in \bbR \bs \si(T_a)$. Then the eigenvalues of $R_{a,c}(\la_0)$ and $T_{a,c}$ are monotone continuous functions of $c \in (a,b)$.
\end{lemma}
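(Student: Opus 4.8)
The plan is to reduce the monotonicity and continuity of the eigenvalues of $R_{a,c}(\la_0)$ to standard perturbation-theoretic facts about a monotone, norm-continuous family of self-adjoint operators, and then transfer the statement to $T_{a,c}$ via the spectral mapping of Lemma~\ref{l2.3}. First I would observe that by Corollary~\ref{c3.2}, $R_{a,c}(\la_0) \oplus 0 = P_c R_a(\la_0) P_c$ as operators on $L^2_W((a,b))^r$, where $P_c$ is the orthogonal projection onto $L^2_W((a,c))^r$. Since $\la_0 \in \bbR \bs \si(T_a)$, the operator $R_a(\la_0) = \wha E_r (T_a - \la_0 E_r)^{-1} \wha E_r^*$ is bounded and self-adjoint on $L^2_W((a,b))^r$ (self-adjointness follows from the explicit Green's function \eqref{3.6}--\eqref{3.8} with $z = \ol{z} = \la_0$, together with the Wronskian identities, which give $K_a(\la_0, x, x')^* = K_a(\la_0, x', x)$). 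As $c$ increases, the projections $P_c$ increase monotonically; hence the compressions $P_c R_a(\la_0) P_c$, restricted to their ranges, form a family to which one can apply the min-max principle.

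Next I would set up the comparison carefully. For $c < c'$ one has $P_c \leq P_{c'}$, and for any $f$ supported in $(a,c)$, $\big(f, R_{a,c}(\la_0) f\big) = \big(f, R_a(\la_0) f\big) = \big(f, R_{a,c'}(\la_0) f\big)$, i.e. $R_{a,c'}(\la_0)$ is (unitarily equivalent to) an extension of $R_{a,c}(\la_0)$ by a compression of $R_a(\la_0)$ on the complementary subspace $L^2_W((c,c'))^r \oplus \cdots$. By the min-max characterization of eigenvalues of a compressed self-adjoint operator, enlarging the subspace moves the $k$th eigenvalue (counted in decreasing order from the top, say, since $R_{a,c}(\la_0)$ has eigenvalues $(\la - \la_0)^{-1}$ accumulating only at $0$) monotonically: each eigenvalue of $R_{a,c}(\la_0)$ is a monotone function of $c$. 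For continuity, the key estimate is that $c \mapsto R_{a,c}(\la_0)$, regarded as the norm-continuous family $P_c R_a(\la_0) P_c$ on the fixed space $L^2_W((a,b))^r$, satisfies $\big\|P_{c'} R_a(\la_0) P_{c'} - P_c R_a(\la_0) P_c\big\| \to 0$ as $c' \to c$; this holds because $P_{c'} \to P_c$ strongly and $R_a(\la_0)$, while not compact in general, still yields norm-continuity of the compression when combined with the fact that $\|P_{c'} - P_c\| \le 1$ only gives strong — so here I would instead argue continuity of each \emph{individual} eigenvalue directly from the min-max formula, using that the Rayleigh quotient $f \mapsto (f, R_a(\la_0) f)$ is fixed and only the admissible subspaces $L^2_W((a,c))^r$ vary continuously (in the sense that $\bigcup_{c'<c} L^2_W((a,c'))^r$ is dense in $L^2_W((a,c))^r$, and symmetrically from above). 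Standard min-max continuity arguments for a monotone net of subspaces then give that each eigenvalue $(\la_j(c) - \la_0)^{-1}$ is continuous in $c$.

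Finally I would transfer to $T_{a,c}$. By Lemma~\ref{l2.3} applied to $T_{a,c}$ and $R_{a,c}(\la_0)$ (valid since $\la_0 \notin \si(T_{a,c})$ by Lemma~\ref{l3.1}), $\la$ is an eigenvalue of $T_{a,c}$ of geometric multiplicity $n$ iff $(\la - \la_0)^{-1}$ is an eigenvalue of $R_{a,c}(\la_0)$ of geometric multiplicity $n$. The map $\mu \mapsto \la_0 + \mu^{-1}$ is a homeomorphism of $\bbR \bs \{0\}$ onto $\bbR \bs \{\la_0\}$ and is monotone on each component; since $R_{a,c}(\la_0)$ is a bounded operator all of whose nonzero spectrum is eigenvalues bounded away from $0$ (on either side of $0$ they accumulate only at $0$), the branches of eigenvalues of $T_{a,c}$ inherit monotonicity and continuity from those of $R_{a,c}(\la_0)$ after applying this fixed homeomorphism. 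Here one must be slightly careful about the sign of the monotonicity and the fact that an eigenvalue branch of $R_{a,c}$ passing through $0$ corresponds to an eigenvalue of $T_{a,c}$ escaping to $\pm\infty$; but on any bounded $\la$-interval this does not occur, so the correspondence is a genuine monotone homeomorphism there.

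\medskip

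I expect the main obstacle to be the continuity part: establishing that each individual eigenvalue of the compressed family $P_c R_a(\la_0) P_c$ depends continuously on $c$, given that $R_a(\la_0)$ need not be compact (so one cannot simply invoke norm-continuity of the whole operator family plus upper semicontinuity of isolated eigenvalues). The cleanest route is to run the min-max argument directly on the monotone net of closed subspaces $\big\{L^2_W((a,c))^r\big\}_{c \in (a,b)}$, exploiting that this net is left- and right-continuous (its union/intersection at $c$ recovers $L^2_W((a,c))^r$ up to a null set), which forces each min-max value to be continuous; the monotonicity direction then comes for free from subspace inclusion. Alternatively, one can use that $R_a(\la_0)$ is an integral operator with a locally bounded kernel (from \eqref{3.7}) so that the compressions actually \emph{are} norm-continuous in $c$, but the subspace argument is more self-contained and avoids extra hypotheses on the kernel.
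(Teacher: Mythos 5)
Your overall architecture — compression plus min--max for monotonicity, continuity of the compressed family for continuity of eigenvalues, and Lemma~\ref{l2.3} to transfer everything to $T_{a,c}$ — is the same as the paper's, and the monotonicity half and the transfer step are fine. The gap is in the continuity step, precisely where you flagged difficulty. The route you designate as ``cleanest'' — running min--max directly on the monotone net of subspaces $L^2_W((a,c))^r$ and deducing continuity of each min--max value from left/right continuity of the net — is not valid for a general bounded self-adjoint operator. Monotonicity plus approximation from inside gives left-continuity of the top min--max values, but right-continuity genuinely fails without a compactness input: take $A$ to be the orthogonal projection onto the closed span of unit vectors $g_n$ supported in $(c,c+1/n)$; then $\sup_{\|f\|=1,\,\supp f\subset(a,c')}(f,Af)=1$ for every $c'>c$ while it equals $0$ at $c'=c$, so the first min--max value jumps. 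Weak limits of near-maximizers supported in $(a,c')$ need not be near-maximizers supported in $(a,c)$ unless $f\mapsto(f,Af)$ is weakly continuous, i.e.\ unless $A$ is compact. Since $R_a(\la_0)$ need not be compact, your primary argument does not close.

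The repair is exactly your parenthetical ``alternative,'' which you should have made the main argument, and it is what the paper does: instead of compressing the half-line resolvent, fix $d\in(c,b)$ and use the identity $R_{a,c}(\la_0)\oplus 0 = P_c R_{a,d}(\la_0) P_c$ (the finite-interval analogue of Corollary~\ref{c3.2}). The finite-interval restricted resolvent $R_{a,d}(\la_0)$ has the semi-separable kernel \eqref{3.7} built from solutions that are continuous on the compact interval $[a,d]$ and from the locally integrable weight, so it is Hilbert--Schmidt, hence compact — no extra hypotheses on the kernel are needed beyond Hypothesis~\ref{h2.1}. For a compact $K$ and $P_{c'}\to P_c$ strongly, $P_{c'}KP_{c'}\to P_cKP_c$ in operator norm, so the eigenvalues of $R_{a,c}(\la_0)$ are continuous in $c$ by standard perturbation theory; combined with your (correct) min--max monotonicity of the eigenvalue counting functions this yields monotone continuous eigenvalue branches, and Lemma~\ref{l2.3} then carries the statement over to $T_{a,c}$ as you describe.
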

\begin{proof}
Note that similarly to Corollary~\ref{c3.2} one has
\begin{equation}
R_{a,c}(\la_0) \oplus 0 = P_c R_{a,d}(\la_0) P_c,  \lb{3.35}
\end{equation}
where $R_{a,d}(\la_0)$ in $L^2_W((a,d))^r$ is defined as $R_{a,c}(\la_0)$ with $c$ replaced by $d \in (a,b)$, $c < d$. The projection operator $P_c$ is now considered in $L^2_W((a,d))^r$. It is continuous with respect to $c$ in the strong operator topology. Since $R_{a,d}(\la_0)$ has a square integrable integral kernel, the operator $R_{a,d}(\la_0)$ is Hilbert--Schmidt (and hence compact) in $L^2_W((a,d))^r$. Thus, $P_cR_{a,d}(\la_0)P_c$ is continuous with respect to $c$ in the uniform operator topology. Consequently, by \eqref{3.35}, the eigenvalues of $R_{a,c}(\la_0)$, and by Lemma~\ref{l2.3} those of $T_{a,c}$, are continuous with respect to $c$ (see, e.g.,
\cite[Theorem~VIII.23]{RS80}, \cite[Theorem~9.5]{We80}).

Since for every $F \in L^2_W((a,c))^r$, its zero extension to $(a,d)$,
\begin{equation}
\wti F(x) = \begin{cases} F(x), & x \in (a,c), \\ 0, & x \in [c,d), \end{cases}
\end{equation}
satisfies
\begin{equation}
(F, R_{a,c}(\la_0) F)_{L^2_W((a,c))^r} =
\big(\wti F, R_{a,d}(\la_0) \wti F\big)_{L^2_W((a,d))^r},
\end{equation}
it follows from the min-max principle that for every $\mu>0$,
\begin{align}
\dim\bigl(\ran\bigl(P\big((-\infty,-\mu);R_{a,c}(\la_0)\big)\big)\big) &\leq
\dim\bigl(\ran\bigl(P\big((-\infty,-\mu);R_{a,d}(\la_0)\big)\big)\big),
\\
\dim\bigl(\ran\bigl(P\big((\mu,\infty);R_{a,c}(\la_0)\big)\big)\big) &\leq
\dim\bigl(\ran\bigl(P\big((\mu,\infty);R_{a,d}(\la_0)\big)\big)\big).
\end{align}
Thus, the eigenvalues of $R_{a,c}(\la_0)$ are monotone (negative ones are nonincreasing, positive ones are nondecreasing) as $c$ increases. Then, by Lemma~\ref{l2.3}, the eigenvalues of $T_{a,c}$ are monotone as well.
\end{proof}

One half of the principal result of this section is stated next:

\begin{theorem} \lb{t3.6}
Assume Hypotheses \ref{h2.2}, \ref{h3.1} and $\la_0, \la_1 \in \bbR \bs \si(T_a)$. Then,
\begin{equation}
N((\la_0,\la_1); T_a) \leq
\sum_{x \in (a,b)} N\big(\Psi_+(\la_0,x,x_0)^* J \Psi_{-,\al}(\la_1, x,x_0)\big).
\end{equation}
\end{theorem}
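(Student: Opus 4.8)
The plan is to approximate $T_a$ by the finite-interval operators $T_{a,c}$, $c\in(a,b)$, and to count how many eigenvalues of $T_{a,c}$ cross $\la_1$ into $(\la_0,\la_1)$ as $c$ increases, bounding the total number of crossings by the zeros of the Wronskian via Theorem~\ref{t3.4}. Put $m(x):=N\big(\Psi_+(\la_0,x,x_0)^*J\Psi_{-,\al}(\la_1,x,x_0)\big)$, so the claimed bound is $\sum_{x\in(a,b)}m(x)$; we may assume this sum is finite, and then, since $m(x)=N(T_{a,x}-\la_1E_r)$ by Theorem~\ref{t3.4}, the Wronskian is singular at only finitely many points $c_1<\dots<c_p$ in $(a,b)$, with $\sum_{i=1}^p m(c_i)=\sum_{x\in(a,b)}m(x)$. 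Fix $\la_0\in\bbR\bs\si(T_a)$; by Lemma~\ref{l3.1} also $\la_0\notin\si(T_{a,c})$ for every $c$, so $R_{a,c}(\la_0)$ is a compact self-adjoint operator, and by Lemma~\ref{l2.3} applied to $T_{a,c},R_{a,c}$, together with the order-reversing bijection $\nu\mapsto(\nu-\la_0)^{-1}$ which carries $(\la_0,\la_1)$ onto $\big((\la_1-\la_0)^{-1},\infty\big)$ and $\la_1$ to $(\la_1-\la_0)^{-1}$ (geometric multiplicities preserved), one has $n(c):=N((\la_0,\la_1);T_{a,c})=\dim\ran P\big(((\la_1-\la_0)^{-1},\infty);R_{a,c}(\la_0)\big)$ and $\wha n(c):=n(c)+m(c)=\dim\ran P\big([(\la_1-\la_0)^{-1},\infty);R_{a,c}(\la_0)\big)$, both finite since $T_{a,c}$ has purely discrete spectrum.

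Next I would record three properties of $n,\wha n$ on $(a,b)$. (i)~By the min--max estimates in the proof of Lemma~\ref{l3.5}, $c\mapsto\dim\ran P\big((\mu,\infty);R_{a,c}(\la_0)\big)$ is nondecreasing for every $\mu>0$; since $n(c)$ is the supremum of these quantities over $\mu>(\la_1-\la_0)^{-1}$ and $\wha n(c)$ their infimum over $0<\mu<(\la_1-\la_0)^{-1}$, both $n(\cdot)$ and $\wha n(\cdot)$ are nondecreasing. (ii)~As in the proof of Lemma~\ref{l3.5}, for any $d\in(a,b)$ the map $c\mapsto R_{a,c}(\la_0)\oplus 0=P_cR_{a,d}(\la_0)P_c$ is norm-continuous on $(a,d)$ because $R_{a,d}(\la_0)$ is Hilbert--Schmidt; hence $c\mapsto R_{a,c}(\la_0)$ is norm-continuous on $(a,b)$, so $n(\cdot)$ (the rank of a spectral projection onto an open half-line) is lower semicontinuous and $\wha n(\cdot)$ (onto a closed half-line) is upper semicontinuous. (iii)~Since $R_{a,c}(\la_0)$ has an integral kernel bounded on compact subsets of $(a,b)^2$ (cf.\ the proof of Lemma~\ref{l3.5}), $\|R_{a,c}(\la_0)\|\to 0$ as $c\downarrow a$, whence $n(c)=0$ for $c$ near $a$. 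Combining these: a nondecreasing lower (resp.\ upper) semicontinuous integer-valued function is left- (resp.\ right-) continuous, so $n(\cdot)$ is left-continuous and $\wha n(\cdot)$ is right-continuous; on each of the intervals $(a,c_1),(c_1,c_2),\dots,(c_p,b)$ we have $m\equiv 0$, hence $n=\wha n$ is continuous, nondecreasing and integer-valued there, thus constant, say equal to $0=N_0,N_1,\dots,N_p$, respectively; and at $c_i$, left-continuity of $n$ gives $n(c_i)=N_{i-1}$ while right-continuity of $\wha n$ gives $N_i=\wha n(c_i)=n(c_i)+m(c_i)=N_{i-1}+m(c_i)$. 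Therefore $\sup_{c\in(a,b)}n(c)=N_p=\sum_{i=1}^p m(c_i)=\sum_{x\in(a,b)}m(x)$.

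It remains to pass to the limit $c\uparrow b$. By Corollary~\ref{c3.2}, $R_{a,c}(\la_0)\oplus 0=P_cR_a(\la_0)P_c$ on $L^2_W((a,b))^r$, and since $P_c\to I$ strongly while $R_a(\la_0)$ is bounded self-adjoint, $R_{a,c}(\la_0)\oplus 0\to R_a(\la_0)$ strongly and uniformly bounded in norm. For strongly convergent, uniformly norm-bounded self-adjoint operators the spectral projections onto $(\mu,\infty)$ converge strongly whenever $\mu\notin\si_p(R_a(\la_0))$, and the rank of a strong limit of orthogonal projections is at most the $\liminf$ of the ranks; since $\mu>0$ discards the summand $0$, this gives, for every such $\mu>(\la_1-\la_0)^{-1}$, $\dim\ran P\big((\mu,\infty);R_a(\la_0)\big)\le\liminf_{c\uparrow b}\dim\ran P\big((\mu,\infty);R_{a,c}(\la_0)\big)\le\sup_{c\in(a,b)}n(c)$. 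Letting $\mu\downarrow(\la_1-\la_0)^{-1}$ (legitimate because $\la_1\notin\si(T_a)$ forces $(\la_1-\la_0)^{-1}\notin\si_p(R_a(\la_0))$ by Lemma~\ref{l2.3}) gives $\dim\ran P\big(((\la_1-\la_0)^{-1},\infty);R_a(\la_0)\big)\le\sup_c n(c)$, and Lemma~\ref{l2.3} applied to $T_a,R_a$ shows this left-hand side is $\ge N((\la_0,\la_1);T_a)$. With the previous paragraph this yields $N((\la_0,\la_1);T_a)\le\sum_{x\in(a,b)}m(x)$, as claimed.

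The step I expect to be the main obstacle is the bookkeeping in the second paragraph: showing that $n(\cdot)$ increases only at the zeros of the Wronskian and by at most the corresponding kernel dimension. This requires meshing the monotonicity and continuity of the eigenvalues of $T_{a,c}$ (Lemma~\ref{l3.5}), the identification of jump sizes (Theorem~\ref{t3.4}, encoded in $\wha n=n+m$), and the semicontinuity of spectral-projection ranks, and in particular ruling out eigenvalues that enter $(\la_0,\la_1)$ through $\la_0$ or materialize in its interior, which is excluded by the monotonicity in Lemma~\ref{l3.5} and by the compactness of $R_{a,c}(\la_0)$ (whose spectrum accumulates only at $0$). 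A secondary difficulty is that $T_{a,c}\to T_a$ only strongly, not in norm, so the eigenvalue count for $T_a$ could a priori be lost in the limit; this is why the cut-off $\mu$ must be placed at a continuity point of the limiting spectral family, which is where both hypotheses $\la_0,\la_1\notin\si(T_a)$ are used.
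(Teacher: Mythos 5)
Your proposal is correct and follows essentially the same route as the paper's proof: restriction to $T_{a,c}$ via Lemma~\ref{l3.1}/Corollary~\ref{c3.2}, passage to the compact restricted resolvent $R_{a,c}(\la_0)$ through Lemma~\ref{l2.3}, a crossing count for the level $(\la_1-\la_0)^{-1}$ based on the monotonicity and continuity of the eigenvalues from Lemma~\ref{l3.5} together with the jump identification of Theorem~\ref{t3.4}, and finally the $\liminf$ inequality for spectral projections under strong convergence as $c \uparrow b$. Your second paragraph is simply a more explicit justification of the paper's inequality \eqref{3.43}, which the paper asserts directly from the monotonicity and continuity of the eigenvalues.
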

\begin{proof}
Let $\mu=\la_1-\la_0>0$. Then by Lemma~\ref{l2.3},
\begin{equation}
N((\la_0,\la_1); T_a) \leq
\dim\bigl(\ran\bigl(P\big((\mu^{-1},\infty); R_a(\la_0)\big)\big)\big). \lb{3.40}
\end{equation}
Since $P_c\underset{c \uparrow b}{\longrightarrow} I_{L^2_W((a,b))^r}$ in the strong operator topology, one has
\begin{equation}
R_{a,c}(\la_0) \oplus 0 = P_c R_a(\la_0) P_c
\underset{c \uparrow b}{\longrightarrow} R_a(\la_0)
\end{equation}
in the strong operator topology in $L^2_W((a,b))^r$, and hence (cf.\ \cite[Lemma~5.2]{GST96a}),
\begin{align}
\dim\bigl(\ran\bigl(P\big((\mu^{-1},\infty); R_a(\la_0)\big)\big)\big) &\leq
\liminf_{c \uparrow b} \,
\dim\bigl(\ran\bigl(P\big((\mu^{-1},\infty); P_cR_a(\la_0)P_c\big)\big)\big) \no \\
&= \liminf_{c \uparrow b} \,
\dim\bigl(\ran\bigl(P\big((\mu^{-1},\infty); R_{a,c}(\la_0)\big)\big)\big). \lb{3.42}
\end{align}
Since $P_c\underset{c \downarrow a}{\longrightarrow} 0$ in the strong operator topology one concludes as in the proof of Lemma \ref{l3.5} that $R_{a,c}(\la_0) \underset{c \downarrow a}{\longrightarrow} 0$ in the norm operator topology. Then since the positive eigenvalues of $R_{a,c}(\la_0)$ are nondecreasing continuous functions of $c$, one concludes that
\begin{equation}
\dim\bigl(\ran\bigl(P\big((\mu^{-1},\infty); R_{a,c}(\la_0)\big)\big)\big)
\leq \sum_{x \in (a,c)} N\big(R_{a,x}(\la_0) - \mu^{-1} I_{L^2_W((a,c))^{r}}\big).  \lb{3.43}
\end{equation}
Combining \eqref{3.43} with \eqref{3.40} and \eqref{3.42} then yields
\begin{align}
N((\la_0,\la_1); T_a) \leq \sum_{x \in (a,b)} N\big(R_{a,x}(\la_0) - \mu^{-1} I_{L^2_W((a,c))^{r}}\big).
\end{align}
Finally, an application of Theorem \ref{t3.4} completes the proof.
\end{proof}

Next we record an auxiliary result.

\begin{lemma} \lb{l3.7}
Assume Hypotheses \ref{h2.2}, \ref{h3.1}. If $c\in(a,b)$ is such that
\begin{equation}
N\big(\Psi_+(\la_0,c,x_0)^* J \Psi_{-,\al}(\la_1,c,x_0)\big) = n >0,
\end{equation}
then there exist $\{v^\pm_j\}_{1 \leq j \leq n} \subset \bbC^m$ so that
\begin{equation}
F_j(x) = \begin{cases}
\Psi_{-,\al}(\la_1,x,x_0) v^-_j, \\
\Psi_+(\la_0,x,x_0) v^+_j,
\end{cases} 1 \leq j \leq n,
\end{equation}
satisfy
\begin{equation}
F_j \in \dom(T_a) \bs \{0\} \, \text { and } \,
(T_a F_j)(x) = E_r F_j(x) \cdot
\begin{cases} \la_1, & x \in (a,c), \\
\la_0, & x \in (c,b), \end{cases} \quad 1 \leq j \leq n.    \lb{3.46}
\end{equation}
\end{lemma}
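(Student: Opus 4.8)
The key point is that the hypothesis $N\big(\Psi_+(\la_0,c,x_0)^* J \Psi_{-,\al}(\la_1,c,x_0)\big) = n > 0$ supplies, by the argument already run in the proof of Theorem~\ref{t3.4}, an $n$-dimensional space of compatible data at $x=c$. Concretely, I would let $\{v^-_j\}_{1\le j\le n}\subset\bbC^m$ be a basis of $\ker\big(\Psi_+(\la_0,c,x_0)^* J \Psi_{-,\al}(\la_1,c,x_0)\big)$, so that the solutions $x\mapsto\Psi_{-,\al}(\la_1,x,x_0)v^-_j$ on $[a,c]$ satisfy both the $\al$-boundary condition at $x=a$ (automatic, since $\al^*J\Psi_{-,\al}(\la_1,a,x_0)=0$) and the matching condition $\Psi_+(\la_0,c,x_0)^* J\big(\Psi_{-,\al}(\la_1,c,x_0)v^-_j\big)=0$ at $x=c$.

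The heart of the argument is to produce the companion vectors $v^+_j$ so that the two branches glue continuously at $x=c$: I need $\Psi_+(\la_0,c,x_0)v^+_j = \Psi_{-,\al}(\la_1,c,x_0)v^-_j$ for each $j$. To solve this, set $w_j := \Psi_{-,\al}(\la_1,c,x_0)v^-_j\in\bbC^{2m}$ and observe that the matching condition at $c$ says precisely $\Psi_+(\la_0,c,x_0)^* J w_j = 0$. Now apply Lemma~\ref{l2.5} (relation \eqref{2.70}) with $\Psi = \Psi_+(\la_0,c,x_0)$, which is nondegenerate:
\begin{equation}
\Psi_+(\la_0,c,x_0)[\Psi_+(\la_0,c,x_0)^*\Psi_+(\la_0,c,x_0)]^{-1}\Psi_+(\la_0,c,x_0)^* w_j
- J\Psi_+(\la_0,c,x_0)[\cdots]^{-1}\Psi_+(\la_0,c,x_0)^* J w_j = w_j.
\end{equation}
The second term vanishes because $\Psi_+(\la_0,c,x_0)^* J w_j = 0$, so $w_j = \Psi_+(\la_0,c,x_0) v^+_j$ with
\begin{equation}
v^+_j := [\Psi_+(\la_0,c,x_0)^*\Psi_+(\la_0,c,x_0)]^{-1}\Psi_+(\la_0,c,x_0)^* w_j \in \bbC^m.
\end{equation}
Thus $F_j$ is well-defined and continuous at $x=c$; it is locally absolutely continuous on $[a,b)$, being piecewise a solution of \eqref{2.10} (more precisely of $(\tau - \la_1 E_r)F_j=0$ on $(a,c)$ and $(\tau - \la_0 E_r)F_j = 0$ on $(c,b)$).

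It remains to check the three assertions: that $F_j\in L^2_A((a,b))^{2m}$, that $\tau F_j \in E_r L^2_A((a,b))^{2m}$ with the stated piecewise values, that $F_j$ satisfies the $\al$-boundary condition at $x=a$, and that $\{F_j\}$ are linearly independent and nonzero. Square-integrability near $b$ follows because $\Psi_+(\la_0,\dott,x_0)$ is the Weyl--Titchmarsh solution, hence $\Psi_+(\la_0,\dott,x_0)\in L^2_A((c,b))^{2m}$, so each column lies in that space and $F_j\vert_{[c,b)}\in L^2_A((c,b))^{2m}$; on the bounded piece $(a,c)$ square-integrability is automatic. The differential identity \eqref{3.46} is just the statement that $F_j$ solves $\tau F_j = \la_1 E_r F_j$ on $(a,c)$ and $\tau F_j = \la_0 E_r F_j$ on $(c,b)$, which holds by construction; note $\tau F_j \in E_r L^2_A((a,b))^{2m}$ since $E_r\tau=\tau$ and $F_j\in L^2_A$. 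The $\al$-boundary condition at $x=a$ holds because $F_j(a) = \Psi_{-,\al}(\la_1,a,x_0)v^-_j$ and $\al^* J\Psi_{-,\al}(\la_1,a,x_0)=0$, equivalently via \eqref{3.5}. Hence $F_j\in\dom(T_a)$. Finally, $F_j\neq 0$ since, e.g., $F_j(a)=\Psi_{-,\al}(\la_1,a,x_0)v^-_j$ and $\Psi_{-,\al}(\la_1,a,x_0)$ is injective (being nondegenerate, $\Psi_{-,\al}(\la_1,a,x_0)^*\Psi_{-,\al}(\la_1,a,x_0)>0$), so $F_j(a)=0\Leftrightarrow v^-_j=0$; and linear independence of $\{F_j\}$ reduces to linear independence of $\{v^-_j\}$ by the same injectivity at $x=a$.

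The main obstacle, if any, is simply bookkeeping at the matching point $x=c$: verifying that the glued function $F_j$ is genuinely in the operator domain, i.e.\ that no boundary term at $c$ obstructs $\tau F_j\in E_r L^2_A$. This is handled by the explicit formula $w_j=\Psi_+(\la_0,c,x_0)v^+_j=\Psi_{-,\al}(\la_1,c,x_0)v^-_j$, which forces continuity of $F_j$ across $c$; the absence of any jump is exactly what is needed, since $\tau$ is a first-order expression and $F_j\in AC_\loc$. Everything else is an application of Lemma~\ref{l2.5}, nondegeneracy of the Weyl--Titchmarsh solutions, and the $L^2$ property of $\Psi_+$ near $b$.
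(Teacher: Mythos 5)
Your proof is correct and follows essentially the same route as the paper's: the paper factorizes $\Psi_+(\la_0,c,x_0)=\ga\rho$ via Lemma~\ref{l2.4} and uses $\ga\ga^*=J\ga\ga^*J+I_{2m}$, which is literally the identity \eqref{2.70} of Lemma~\ref{l2.5} that you invoke directly (since $\ga\ga^*=\Psi_+[\Psi_+^*\Psi_+]^{-1}\Psi_+^*$), and both constructions produce the same vectors $v_j^+$. The additional verifications you record (square-integrability near $b$ via the Weyl--Titchmarsh property, the $\al$-boundary condition at $a$, nonvanishing of $F_j$) are all sound, the only nitpick being that $E_r\tau\neq\tau$ in general; the needed membership $\tau F_j\in E_r L^2_A((a,b))^{2m}$ follows instead directly from $\tau F_j=\la E_r F_j$ a.e.\ with $F_j\in L^2_A((a,b))^{2m}$.
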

\begin{proof}
Let $\{v^-_j\}_{1 \leq j \leq n}$ be a basis of
$\ker\big(\Psi_+(\la_0,c,x_0)^* J \Psi_{-,\al}(\la_1,c,x_0)\big)$.
By Lemma \ref{l2.4} there exists
$\ga\in\bbC^{2m\times m}$ satisfying \eqref{2.17} (equiv., \eqref{2.18}, \eqref{2.19}) and invertible $\rho\in\bbC^{m\times m}$ such that $\Psi_+(\la_0,c,x_0) = \ga\rho$.
Defining
\begin{equation}
v^+_j = \rho^{-1} \ga^* \Psi_{-,\al}(\la_1,c,x_0)v^-_j, \quad 1 \leq j \leq n,
\end{equation}
then yields
$\Psi_+(\la_0,c,x_0) v^+_j = \ga \ga^* \Psi_{-,\al}(\la_1,c,x_0)v^-_j$, $1 \leq j \leq n$.
By construction,
\begin{align}
\ga^* J \Psi_{-,\al}(\la_1,c,x_0) v^-_j = (\rho^{-1})^*\Psi_+(\la_0,c,x_0)^* J \Psi_{-,\al}(\la_1,c,x_0) v^-_j = 0, \quad 1 \leq j \leq n,
\end{align}
and by \eqref{2.19}, $\ga \ga^* = J \ga \ga^* J + I_{2m}$, hence
\begin{align}
\Psi_+(\la_0,c,x_0) v^+_j &= (J \ga \ga^* J + I_{2m})
\Psi_{-,\al}(\la_1,c,x_0) v^-_j     \no \\
&= \Psi_{-,\al}(\la_1,c,x_0) v^-_j, \quad 1 \leq j \leq n.
\end{align}
Thus, $F_j$ are continuous at $x=c$, hence $F_j \in AC_\loc([a, b))^{2m}$
and $F_j \in \dom(T_a)$, $1 \leq j \leq n$. The second assertion in
\eqref{3.46} is clear.
\end{proof}

Employing the Wronskian identity one obtains the following orthogonality statement:

\begin{lemma} \lb{l3.8}
Assume Hypotheses \ref{h2.2}, \ref{h3.1}. Let $\{c_k\}_{k=1}^K \subset (a,b)$ be the points where
\begin{equation}
N\big(\Psi_+(\la_0,c_k,x_0)^* J \Psi_{-,\al}(\la_1,c_k,x_0)\big) =:n_k > 0.
\end{equation}
In addition, for each $c_k$, let $\{v^\pm_{k,j}\}_{1 \leq j \leq n_k}$ be as in Lemma \ref{l3.7}, and introduce
\begin{align}
u^-_{k,j} (x) &= \Psi_{-,\al}(\la_1,x,x_0) v^-_{k,j} \chi_{(a,c_k)}(x),   \no \\
u^+_{k,j} (x) &= \Psi_+(\la_0,x,x_0) v^+_{k,j} \chi_{(c_k,b)}(x),
\quad 1 \leq j \leq n_k, \; 1 \leq k \leq K.
\end{align}
Then,
\begin{equation}
(u^+_{k,j} , u^-_{\ell,i})_{L^2_A((a,b))^{2m}} = 0, \quad
1 \leq j \leq n_k, \; 1 \leq i \leq n_\ell, \; 1 \leq k, \ell \leq K. \lb{3.51}
\end{equation}
\end{lemma}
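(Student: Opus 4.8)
The plan is to compute the pairing \eqref{3.51} directly from the definition of the inner product on $L^2_A((a,b))^{2m}$, which by \eqref{2.13}, \eqref{2.14} and polarization reads $(F,G)_{L^2_A((a,b))^{2m}} = \int_a^b dx\, F(x)^* A(x) G(x)$ (here $F(x)^* A(x) G(x) = (\wha E_r F)(x)^* W(x) (\wha E_r G)(x)$). The essential point is that $u^+_{k,j}$ is assembled from the solution $\Psi_+(\la_0,\dott,x_0)$ at the parameter $\la_0$ while $u^-_{\ell,i}$ is assembled from $\Psi_{-,\al}(\la_1,\dott,x_0)$ at $\la_1 \neq \la_0$, so on any subinterval where both are nonzero the Wronskian identity \eqref{1.25} converts the occurring integral into a pure boundary term.

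First I would dispose of the trivial case. Since $u^+_{k,j}$ is supported in $(c_k,b)$ and $u^-_{\ell,i}$ in $(a,c_\ell)$, these supports are disjoint whenever $c_k \geq c_\ell$ (in particular whenever $k=\ell$), so the integrand vanishes identically and \eqref{3.51} holds. In the remaining case $c_k < c_\ell$ the product $(u^+_{k,j})^* A\, u^-_{\ell,i}$ is supported in $(c_k,c_\ell)$, where $u^+_{k,j}(x) = \Psi_+(\la_0,x,x_0) v^+_{k,j}$ and $u^-_{\ell,i}(x) = \Psi_{-,\al}(\la_1,x,x_0) v^-_{\ell,i}$, so applying \eqref{1.25} to $\Psi_+(\la_0,\dott,x_0)$ and $\Psi_{-,\al}(\la_1,\dott,x_0)$ and using $\la_1 \neq \la_0$ gives
\[
(u^+_{k,j}, u^-_{\ell,i})_{L^2_A((a,b))^{2m}} = \frac{1}{\la_1-\la_0}\, (v^+_{k,j})^* \big[ \Psi_+(\la_0,x,x_0)^* J \Psi_{-,\al}(\la_1,x,x_0) \big]_{x=c_k}^{x=c_\ell}\, v^-_{\ell,i}.
\]

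It then remains to check that both endpoint contributions vanish. The contribution at $x=c_\ell$ is zero because $v^-_{\ell,i} \in \ker\big(\Psi_+(\la_0,c_\ell,x_0)^* J \Psi_{-,\al}(\la_1,c_\ell,x_0)\big)$ by the construction in Lemma~\ref{l3.7}. The contribution at $x=c_k$ is the one place needing a small extra argument, since $v^-_{\ell,i}$ need not be annihilated by the Wronskian at $c_k$: here one uses the matching identity $\Psi_+(\la_0,c_k,x_0) v^+_{k,j} = \Psi_{-,\al}(\la_1,c_k,x_0) v^-_{k,j}$ from Lemma~\ref{l3.7}, which upon taking adjoints yields
\[
(v^+_{k,j})^* \Psi_+(\la_0,c_k,x_0)^* J \Psi_{-,\al}(\la_1,c_k,x_0) v^-_{\ell,i} = (v^-_{k,j})^* \big[ \Psi_{-,\al}(\la_1,c_k,x_0)^* J \Psi_{-,\al}(\la_1,c_k,x_0) \big] v^-_{\ell,i},
\]
and the bracketed matrix is $0_m$ because $\Psi_{-,\al}(\la_1,\dott,x_0)$ is nondegenerate, i.e.\ $\Psi^* J \Psi = 0$ pointwise by \eqref{2.51}. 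Hence both endpoint terms vanish and \eqref{3.51} follows.

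The only real obstacle, modest as it is, is the $x=c_k$ endpoint: unlike at $c_\ell$, it cannot be killed by a kernel condition, and instead one must transfer the $\Psi_+$-factor into a second $\Psi_{-,\al}$-factor via Lemma~\ref{l3.7} and then invoke the self-annihilation $\Psi^* J \Psi = 0$ of the nondegenerate solution $\Psi_{-,\al}(\la_1,\dott,x_0)$. The rest is a routine application of the Wronskian identity.
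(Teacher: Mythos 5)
Your proof is correct and follows essentially the same route as the paper: the Wronskian identity \eqref{2.20} reduces the integral over $(c_k,c_\ell)$ to boundary terms, the term at $c_\ell$ dies by the kernel condition on $v^-_{\ell,i}$, and the term at $c_k$ dies by combining the matching identity $\Psi_+(\la_0,c_k,x_0)v^+_{k,j}=\Psi_{-,\al}(\la_1,c_k,x_0)v^-_{k,j}$ with the self-annihilation $\Psi_{-,\al}^*J\Psi_{-,\al}=0$ of the nondegenerate solution. Your explicit treatment of the disjoint-support case $c_k\geq c_\ell$ is a minor tidying of what the paper leaves implicit in writing $\int_{c_k}^{c_\ell}$.
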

\begin{proof}
Using \eqref{2.20}, one computes,
\begin{align}
& (u^+_{k,j} , u^-_{\ell,i})_{L^2_A((a,b))^{2m}}
= \int_{c_k}^{c_{\ell}} dx \, (v^+_{k,j})^* \Psi_+(\la_0,x,x_0)^* A(x) \Psi_{-,\al}(\la_1,x,x_0) v^-_{\ell,i}   \no \\
& \quad = (\la_1-\la_0)^{-1} (v^+_{k,j})^* \Psi_+(\la_0,x,x_0)^* J
\Psi_{-,\al}(\la_1,x,x_0) v^-_{\ell,i} \big|_{c_k}^{c_{\ell}}.    \lb{3.52}
\end{align}
By construction,
\begin{equation}
v^-_{\ell,i} \in \ker\big(\Psi_+(\la_0,c_{\ell},x_0)^* J
\Psi_{-,\al}(\la_1,c_{\ell},x_0)\big), \quad 1 \leq i \leq n_\ell, \; 1\leq \ell \leq K,     \lb{3.53}
\end{equation}
and
\begin{equation}
\Psi_+(\la_0,c_k,x_0) v^+_{k,j} = \Psi_{-,\al}(\la_1,c_k) v^-_{k,j},
\quad 1 \leq j \leq n_k, \; 1 \leq k \leq K.
\end{equation}
Since $\Psi_{-,\al}(z,\dott,x_0)$ is nondegenerate, $\Psi_{-,\al}(z,\dott,x_0)^* J \Psi_{-,\al}(z,\dott,x_0) = 0$ yielding
\begin{align}
\begin{split}
& \Psi_{-,\al}(\la_1,c_k,x_0)^* J \Psi_+(\la_0,c_k,x_0) v^+_{k,j}    \\
& \quad =
\Psi_{-,\al}(\la_1,c_k,x_0)^* J \Psi_{-,\al}(\la_1,c_k,x_0) v^-_{k,j} = 0,
\quad 1 \leq j \leq n_k, \; 1 \leq k \leq K,
\end{split}
\end{align}
that is,
\begin{equation}
v^+_{k,j} \in \ker\big(\Psi_-(\la_1,c_k,x_0)^* J \Psi_+(\la_0,c_k,x_0)\big),
\quad 1 \leq j \leq n_k, \; 1 \leq k \leq K.    \lb{3.54}
\end{equation}
Thus, \eqref{3.51} follows from \eqref{3.52}, \eqref{3.53}, and \eqref{3.54}.
\end{proof}

This leads to the second half of the principal result of this section:

\begin{theorem} \lb{t3.9}
Assume Hypotheses \ref{h2.2}, \ref{h3.1}, $\la_0, \la_1 \in \bbR \bs \si(T_a)$, and $(\la_0,\la_1)\cap\si_\ess(T_a)=\emptyset$. Then,
\begin{equation}
N((\la_0,\la_1); T_a) \geq
\sum_{x \in (a,b)} N\big(\Psi_+(\la_0,x,x_0)^* J \Psi_{-,\al}(\la_1, x,x_0)\big).
\end{equation}
\end{theorem}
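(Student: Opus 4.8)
The plan is to exploit the collection of functions $u^-_{k,j}$, $u^+_{k,j}$ from Lemma~\ref{l3.8}, which has total cardinality $\sum_{k=1}^K n_k = \sum_{x \in (a,b)} N\big(\Psi_+(\la_0,x,x_0)^* J \Psi_{-,\al}(\la_1,x,x_0)\big)$, and show that it spans a subspace of dimension equal to that sum on which $R_a(\la_0)$ behaves in a way that forces at least this many eigenvalues of $R_a(\la_0)$ to lie in $(\mu^{-1},\infty)$ or $(-\infty,-\mu^{-1})$ with $\mu=\la_1-\la_0$; by Lemma~\ref{l2.3} and the bijectivity of the spectral map this will give the claimed lower bound on $N((\la_0,\la_1);T_a)$. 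Concretely, for each $k$ set $F_{k,j} = u^-_{k,j} + u^+_{k,j}$; by Lemma~\ref{l3.7} these lie in $\dom(T_a)\bs\{0\}$ and satisfy $(T_a - \la_0 E_r)F_{k,j} = (\la_1-\la_0) E_r u^-_{k,j}$, equivalently $R_a(\la_0)\wha E_r u^-_{k,j} = (\la_1-\la_0)^{-1}\wha E_r F_{k,j} = (\la_1-\la_0)^{-1}\wha E_r(u^-_{k,j}+u^+_{k,j})$. Thus on the span $\cU$ of $\{\wha E_r u^-_{k,j}\}$ the operator $R_a(\la_0)$ acts (up to the bounded perturbation coming from the $u^+$-parts) like multiplication by $\mu^{-1}$, and the orthogonality relation \eqref{3.51} ensures the $u^+$-parts are orthogonal to all the $u^-$-parts.

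The key step is to convert this structure into a genuine count. First I would verify that $\{\wha E_r u^-_{k,j}\}$ is linearly independent in $L^2_W((a,b))^r$: since each $u^-_{k,j}$ is supported on $(a,c_k)$ and equals $\Psi_{-,\al}(\la_1,\dott,x_0)v^-_{k,j}$ there, independence within a fixed $k$ follows from nondegeneracy of $\Psi_{-,\al}$ (as in the proof of Theorem~\ref{t3.4}), and across distinct $c_k$'s one uses the nested support structure together with unique solvability of \eqref{2.10}. Likewise $\{\wha E_r u^+_{k,j}\}$ is independent. Then I would consider the quadratic form $(\Phi, R_a(\la_0)\Phi)_{L^2_W((a,b))^r}$ for $\Phi = \wha E_r\big(\sum_{k,j} \alpha_{k,j} u^-_{k,j}\big)$. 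Using $R_a(\la_0)\wha E_r u^-_{k,j} = \mu^{-1}\wha E_r(u^-_{k,j}+u^+_{k,j})$ and the orthogonality \eqref{3.51}, one gets $(\Phi,R_a(\la_0)\Phi) = \mu^{-1}\|\Phi\|^2 + \mu^{-1}\big(\Phi, \wha E_r\sum \alpha_{k,j}u^+_{k,j}\big) = \mu^{-1}\|\Phi\|^2$, since each $u^+_{k,j}$ is $L^2_A$-orthogonal to every $u^-_{\ell,i}$. Hence $R_a(\la_0)$ restricted in form to the subspace $\cU$ equals exactly $\mu^{-1} I$, so by the min-max principle $\dim\big(\ran\big(P\big([\mu^{-1},\infty);R_a(\la_0)\big)\big)\big) \ge \dim\cU = \sum_{x\in(a,b)} N(\cdots)$.

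To finish, I must upgrade the closed half-line $[\mu^{-1},\infty)$ to the open one $(\mu^{-1},\infty)$, i.e. rule out that $\mu^{-1}$ itself is an eigenvalue absorbing some of this count. Here the hypothesis $(\la_0,\la_1)\cap\si_\ess(T_a)=\emptyset$ is essential: $\mu^{-1}=(\la_1-\la_0)^{-1}$ corresponds under the spectral map of Lemma~\ref{l2.3} to $\la_1$, and $\la_1\notin\si(T_a)$ by assumption, so $\mu^{-1}\notin\si(R_a(\la_0))$ — thus $\mu^{-1}$ is in the resolvent set and the spectral projection onto $[\mu^{-1},\infty)$ coincides with that onto $(\mu^{-1},\infty)$. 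Combining this with $N((\la_0,\la_1);T_a) = \dim\big(\ran\big(P\big((\mu^{-1},\infty);R_a(\la_0)\big)\big)\big)$ (Lemma~\ref{l2.3}) yields the desired inequality. The main obstacle I anticipate is the bookkeeping needed to show that the form of $R_a(\la_0)$ on $\cU$ is \emph{exactly} $\mu^{-1}I$ rather than merely $\ge \mu^{-1}$ on a positive cone — this is precisely what the orthogonality Lemma~\ref{l3.8} is designed to deliver, so the argument should go through cleanly once the independence claims are nailed down.
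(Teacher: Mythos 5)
Your proposal is correct and follows essentially the same route as the paper: the same test subspace $\wha E_r\,{\rm lin.span}\{u^-_{k,j}\}$, the same identity $R_a(\la_0)\wha E_r u^-_{k,j}=\mu^{-1}\wha E_r(u^-_{k,j}+u^+_{k,j})$ from Lemma~\ref{l3.7}, the orthogonality of Lemma~\ref{l3.8} to evaluate the quadratic form as exactly $\mu^{-1}\|\cdot\|^2$, and min--max plus Lemma~\ref{l2.3} (with $\la_1\notin\si(T_a)$ excluding $\mu^{-1}$ from $\si(R_a(\la_0))$) to conclude. No substantive differences.
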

\begin{proof}
Let $\mu=\la_1-\la_0$. Since $\la_1\notin\si(T_a)$ it suffices to prove, by Lemma \ref{l2.3}, that
\begin{equation}
\dim\bigl(\ran\bigl(P\big([\mu^{-1},\infty); R_a(\la_0)\big)\big)\big) \geq
\sum_{x \in (a,b)} N\big(\Psi_+(\la_0,x,x_0)^* J \Psi_{-,\al}(\la_1, x,x_0)\big).
\lb{3.56}
\end{equation}
By the min-max principle it suffices to establish the existence of a subspace
$\wha \cS$ of $L^2_W((a,b))^r$ whose dimension equals the right-hand side of \eqref{3.56}, such that
\begin{equation}
(f,R_a(\la_0) f)_{L^2_W((a,b))^r} \geq \mu^{-1}
(f, f)_{L^2_W((a,b))^r},  \quad f \in \wha \cS.    \lb{3.57}
\end{equation}
To this end, let $\{c_k\}_{k=1}^K \subset (a,b)$ and $u^\pm_{k,j}$, $1 \leq j \leq n_k$, $1 \leq k \leq K$, be as in Lemma \ref{l3.8} and introduce
\begin{equation}
\cS = \ol{{\rm lin.span} \{u^-_{k,j} \,|\, 1 \leq j \leq n_k, \; 1 \leq k \leq K\}}, \quad
\wha \cS = \wha E_r \cS.
\end{equation}
Since the functions $u^-_{k,j}$, $1 \leq j \leq n_k$, $1 \leq k \leq K$, are linearly independent and
\begin{equation}
(f,g)_{L^2_A((a,b))^{2m}} = \big(\wha E_r f, \wha E_r g\big)_{L^2_W((a,b))^r},
\quad f,g \in L^2_A((a,b))^{2m},
\end{equation}
one concludes that
\begin{equation}
\dim\bigl(\wha \cS\,\big) = \dim(\cS) =
\sum_{x \in (a,b)} N\big(\Psi_+(\la_0,x,x_0)^* J \Psi_{-,\al}(\la_1, x,x_0)\big).
\end{equation}
Next, since $(T - \la_0 E_r) (u^+_{k,j} - u^-_{k,j}) = \mu E_r u^-_{k,j}$, one obtains upon introducing
\begin{equation}
\wha u^\pm_{k,j} = \wha E_r u^\pm_{k,j},
\quad 1 \leq j \leq n_k, \; 1 \leq k \leq K,
\end{equation}
that
\begin{equation}
R_a(\la_0) \wha u^-_{k,j} = \mu^{-1} \big(\wha u^+_{k,j} - \wha u^-_{k,j}\big),
\quad 1 \leq j \leq n_k, \; 1 \leq k \leq K.
\end{equation}
By linearity, also
\begin{equation}
R_a(\la_0) \wha f_- = \mu^{-1} \big(\wha f_+ + \wha f_-\big),
\quad \wha f_{\pm} = \sum_{k=1}^K \sum_{j=1}^{n_k} d_{k,j} \wha u^\pm_{k,j},
\end{equation}
with $d_{k,j} \in \bbC$, $1 \leq j \leq n_k$, $1 \leq k \leq K$. By Lemma \ref{l3.8},
$\wha f_+ \, \bot \, \wha f_-$ in $L^2_W((a,b))^r$ since
\begin{equation}
\big(\wha f_+ , \wha f_- \big)_{L^2_W((a,b))^r}
= \sum_{k=1}^K \sum_{j=1}^{n_k} \sum_{\ell=1}^K \sum_{i=1}^{n_{\ell}}
d_{k,j} \ol{d_{\ell,i}} (u^+_{k,j} , u^-_{\ell,i})_{L^2_A((a,b))^{2m}}
= 0.
\end{equation}
Thus,
\begin{equation}
\big(\wha f_-, R_a(\la_0) \wha f_-\big)_{L^2_W((a,b))^r}
= \mu^{-1} \big(\wha f_-, \wha f_-\big)_{L^2_W((a,b))^r},
\quad \wha f_- \in \wha \cS,
\end{equation}
implying \eqref{3.57}.
\end{proof}

Combining Theorems \ref{t3.6} and \ref{t3.9} thus yields the first new principal result of this paper:

\begin{theorem} \lb{t3.10}
Assume Hypotheses \ref{h2.2}, \ref{h3.1}, $\la_0, \la_1 \in \bbR \bs \si(T_a)$, and $(\la_0,\la_1)\cap\si_\ess(T_a)=\emptyset$. Then,
\begin{equation}
N((\la_0,\la_1); T_a) =
\sum_{x \in (a,b)} N\big(\Psi_+(\la_0,x,x_0)^* J \Psi_{-,\al}(\la_1, x,x_0)\big).
\end{equation}
\end{theorem}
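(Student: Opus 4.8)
The plan is straightforward now that the two inequalities have been isolated: Theorem \ref{t3.10} is nothing but the conjunction of Theorem \ref{t3.6} and Theorem \ref{t3.9}. First I would invoke Theorem \ref{t3.6}, which under the present hypotheses (Hypotheses \ref{h2.2}, \ref{h3.1} and $\la_0,\la_1\in\bbR\bs\si(T_a)$) gives the upper bound
\begin{equation}
N((\la_0,\la_1); T_a) \leq
\sum_{x \in (a,b)} N\big(\Psi_+(\la_0,x,x_0)^* J \Psi_{-,\al}(\la_1, x,x_0)\big).
\end{equation}
Next I would invoke Theorem \ref{t3.9}, which under the same hypotheses \emph{together with} the extra assumption $(\la_0,\la_1)\cap\si_\ess(T_a)=\emptyset$ gives the matching lower bound
\begin{equation}
N((\la_0,\la_1); T_a) \geq
\sum_{x \in (a,b)} N\big(\Psi_+(\la_0,x,x_0)^* J \Psi_{-,\al}(\la_1, x,x_0)\big).
\end{equation}
Combining the two displayed inequalities yields the asserted equality, so no further work is needed.

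It is worth recording why the extra spectral-gap hypothesis enters only through the lower bound. The upper bound in Theorem \ref{t3.6} rests on the monotone-continuity of the eigenvalues of $R_{a,c}(\la_0)$ in $c$ (Lemma \ref{l3.5}), the strong-operator convergence $P_cR_a(\la_0)P_c\to R_a(\la_0)$ as $c\uparrow b$, and the identification of local nullities via Theorem \ref{t3.4}; none of this requires the interval $(\la_0,\la_1)$ to avoid the essential spectrum. By contrast, the lower bound in Theorem \ref{t3.9} is obtained by exhibiting an explicit trial subspace $\wha\cS$ of dimension equal to the right-hand side on which $R_a(\la_0)$ is bounded below by $\mu^{-1}$, and then reading off an eigenvalue count for $R_a(\la_0)$ in $[\mu^{-1},\infty)$ via the min-max principle; translating that count back to a geometric eigenvalue multiplicity of $T_a$ in $(\la_0,\la_1)$ through Lemma \ref{l2.3} requires $(\la_0,\la_1)$ to consist only of eigenvalues of $T_a$, i.e.\ $(\la_0,\la_1)\cap\si_\ess(T_a)=\emptyset$.

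Since both ingredients are already established, there is no genuine obstacle here; the only thing to verify is that the hypotheses of Theorem \ref{t3.10} are precisely the union of the hypotheses of Theorems \ref{t3.6} and \ref{t3.9}, which is immediate by inspection (Theorem \ref{t3.6} needs a strict subset, Theorem \ref{t3.9} needs the full set). Hence the proof consists of a single sentence citing those two theorems and noting that the two inequalities combine to an equality.

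\begin{proof}
By Theorem \ref{t3.6} one has
\begin{equation*}
N((\la_0,\la_1); T_a) \leq
\sum_{x \in (a,b)} N\big(\Psi_+(\la_0,x,x_0)^* J \Psi_{-,\al}(\la_1, x,x_0)\big),
\end{equation*}
while the additional hypothesis $(\la_0,\la_1)\cap\si_\ess(T_a)=\emptyset$ permits the application of Theorem \ref{t3.9}, which yields the reverse inequality
\begin{equation*}
N((\la_0,\la_1); T_a) \geq
\sum_{x \in (a,b)} N\big(\Psi_+(\la_0,x,x_0)^* J \Psi_{-,\al}(\la_1, x,x_0)\big).
\end{equation*}
Combining the two inequalities completes the proof.
\end{proof}
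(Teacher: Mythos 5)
Your proof is correct and is exactly the paper's argument: Theorem \ref{t3.10} is stated there as the immediate combination of the upper bound of Theorem \ref{t3.6} and the lower bound of Theorem \ref{t3.9}. Your additional remark on where the spectral-gap hypothesis enters is accurate but not needed for the proof itself.
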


We emphasize that the interval $(\la_0,\la_1)$ can lie in any essential spectral gap of $T_a$, not just below its essential spectrum as in standard approaches to oscillation theory. To the best of our knowledge, even the special scalar case $m=1$ appears to be new for general Hamiltonian systems.

\begin{remark} \lb{r3.11}
The case $\la_0, \la_1 \in \bbR \bs \si(T_a)$,
$\la_1 < \la_0$, $(\la_1,\la_0)\cap\si_\ess(T_a) = \emptyset$ is completely analogous. Similarly, one can interchange the roles of $\Psi_+$ and $\Psi_{-,\al}$ and employ
$\Psi_+(\la_1,c,x_0)$, $\Psi_{-,\al}(\la_0, c,x_0)$, etc.
\end{remark}

\section{The Real Line Case $(a,b) = \bbR$} \lb{s4}

In our final section we consider the full-line case $(a,b) = \bbR$, replacing the operator $T_a$ by $T$ (cf.\ \eqref{2.22}, \eqref{2.23}), still assuming Hypothesis \ref{h2.2} throughout. In addition, we make the following assumptions.

\begin{hypothesis} \lb{h4.1}
Fix $x_0,\la_0,\la_1\in\bbR$, $\la_0<\la_1$, and assume the Weyl--Titchmarsh solutions $\Psi_+(\la_0,\dott,x_0)$ and $\Psi_-(\la_1,\dott,x_0)$ are well-defined.
In addition, for $a\in\bbR$ define the boundary condition matrix
\begin{equation}
\al := \al(\la_1,a,x_0)
= (\sin(\te_-(\la_1,a,x_0)) \;\; \cos(\te_-(\la_1,a,x_0)))^{\top}
\in \bbC^{2m\times m},  \lb{4.1}
\end{equation}
$($satisfying \eqref{2.17}, equiv., \eqref{2.18}, \eqref{2.19}$)$, where $\te_-(\la_1,\dott,x_0)$ is the Pr\"ufer angle of the Weyl--Titchmarsh solution $\Psi_-(\la_1,\dott,x_0)$ introduced in Lemma~\ref{l2.4}. We continue denoting the half-line operator in \eqref{2.22} by $T_a$ with the boundary matrix $\al$ now defined as in \eqref{4.1}.
\end{hypothesis}

In the following we consider the orthogonal projection $P_a$ on $L^2_A(\bbR)^{2m}$ given by
\begin{equation}
(P_a f)(x) := \begin{cases} f(x), & x \in (a,\infty), \\ 0, & x \in (-\infty,a], \end{cases}
\quad f \in L^2_A(\bbR)^{2m}.
\end{equation}
With a slight abuse of notation, we will denote the analogous projection operator in $L^2_W(\bbR)^r$ by the same symbol $P_a$.

In analogy to Lemma \ref{l3.1}, one then obtains the following restriction result.

\begin{lemma} \lb{l4.1}
Assume Hypotheses \ref{h2.2}, \ref{h4.1}, and $\la_1 \in \bbR \bs \si(T)$. Then, $\la_1\notin\si(T_a)$ and
\begin{align}
\big((T - \la_1 E_r)^{-1} P_a G\big)\big|_{(a,\infty)} =
(T_a - \la_1 E_r)^{-1}(G\vert_{(a,\infty)}),
\quad G \in E_r L^2_A(\bbR)^{2m},   \lb{4.3}
\end{align}
and hence,
\begin{align} \lb{4.4}
P_a R(\la_1) P_a = R_a(\la_1)\oplus0.
\end{align}
\end{lemma}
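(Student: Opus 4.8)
The plan is to mimic the proof of Lemma~\ref{l3.1} exactly, transposing everything from the ``restrict from the right'' setting to the ``restrict from the left'' setting. The key observation is that the boundary matrix $\al$ in Hypothesis~\ref{h4.1} has been defined precisely so that the Weyl--Titchmarsh solution $\Psi_-(\la_1,\dott,x_0)$ satisfies the $\al$-boundary condition at $x=a$ for the half-line operator $T_a$. Concretely: first I would verify that $\Psi_-(\la_1,\dott,x_0)$, being a nondegenerate solution of $\tau\Psi = \la_1 E_r\Psi$, satisfies $\Psi_-(\la_1,a,x_0)^* J \Psi_-(\la_1,a,x_0) = 0$ by the second condition in \eqref{2.51}. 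By Lemma~\ref{l2.4} applied to $\Psi_-(\la_1,\dott,x_0)$ one has $\Psi_-(\la_1,a,x_0) = \al\,\rho_-(\la_1,a,x_0)$ with $\al$ as in \eqref{4.1} and $\rho_-(\la_1,a,x_0)$ invertible; hence $\al^* J\Psi_-(\la_1,a,x_0) = \rho_-^* \al^* J\al\,\rho_- = 0$ using $\al^* J\al = 0_m$ from \eqref{2.19}. So $\Psi_-(\la_1,\dott,x_0)$ does satisfy the $\al$-boundary condition at $a$, i.e.\ it is (a constant multiple of) the Weyl--Titchmarsh solution $\Psi_{-,\al}(\la_1,\dott,x_0)$ for $T_a$.

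Next, since by hypothesis $\la_1\notin\si(T)$, the Green's function representation \eqref{3.6}--\eqref{3.8} for $T$ is available; and since $T_a$ (with boundary matrix $\al$ from \eqref{4.1}) uses the same Weyl--Titchmarsh solution at the left endpoint and the same $\Psi_+(\cdot,\dott,x_0)$ at $+\infty$, the resolvent of $T_a$ at $z=\la_1$ is given by the completely analogous formula on $(a,\infty)$ with the same kernel $K_a(\la_1,\cdot,\cdot)$. In particular $\la_1\notin\si(T_a)$, and comparing the two Green's function formulas (they literally agree on $x,x'\in(a,\infty)$) yields \eqref{4.3}. The step from \eqref{4.3} to \eqref{4.4} is then the same bookkeeping as the passage from Lemma~\ref{l3.1} to Corollary~\ref{c3.2}: apply $\wha E_r$ on the left and $\wha E_r^*$ on the right, use $E_r = \wha E_r^*\wha E_r$ and the definitions \eqref{2.31}, \eqref{3.13}-type, and note that $P_a R(\la_1) P_a$ annihilates the $L^2_W((-\infty,a))^r$ component while acting as $R_a(\la_1)$ on the $L^2_W((a,\infty))^r$ component.

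I do not expect any serious obstacle here; the only point requiring a line of care is confirming that the Weyl--Titchmarsh solution $\Psi_+(\la_0,\dott,x_0)$ at $+\infty$ is unaffected by truncating the domain from $\bbR$ to $(a,\infty)$ --- but this is immediate from its defining property $\Psi_+(\la_0,\dott,x_0)\in L^2_A((c,\infty))^{2m}$ for all $c\in(a,\infty)$, which does not reference the left endpoint at all. (One should also note that although $\Psi_+(\la_0,\dott,x_0)$ carries the spectral parameter $\la_0$ while $T_a$ uses $\la_1$, this is harmless: the Green's function formula \eqref{3.7}--\eqref{3.8} for $T_a$ at energy $z=\la_1$ uses $\Psi_+(\la_1,\dott,x_0)$, and the statement being proved only concerns $(T-\la_1E_r)^{-1}$ versus $(T_a-\la_1E_r)^{-1}$; the role of $\la_0$ and the matrix $\ga$ enters only later in the argument, not in Lemma~\ref{l4.1}.) Thus the proof is essentially a transcription of the Lemma~\ref{l3.1}/Corollary~\ref{c3.2} argument with left and right endpoints interchanged.
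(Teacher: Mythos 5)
Your proposal is correct and is precisely the argument the paper intends: Lemma~\ref{l4.1} is stated without proof as being ``in analogy to Lemma~\ref{l3.1}'', and your transposition --- using Lemma~\ref{l2.4} and $\al^* J \al = 0_m$ to show that $\Psi_-(\la_1,\dott,x_0)$ satisfies the $\al$-boundary condition at $x=a$, so that the Green's kernels of $T$ and $T_a$ at $z=\la_1$ coincide on $(a,\infty)$ and \eqref{4.4} follows by conjugating with $\wha E_r$ --- is exactly that analogy spelled out. (One trivial slip: $\al^* J \Psi_-(\la_1,a,x_0) = \al^* J \al\, \rho_-(\la_1,a,x_0)$, without the extra left factor $\rho_-^*$ you wrote, though either expression vanishes.)
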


Using the half-line results of Section~\ref{s3} we now obtain the second principal result of this paper:

\begin{theorem} \lb{t4.2}
Assume Hypotheses \ref{h2.2}, \ref{h4.1}, $\la_0, \la_1 \in \bbR \bs \si(T)$, and $(\la_0,\la_1)\cap\si_\ess(T)=\emptyset$. Then,
\begin{equation}
N((\la_0,\la_1); T) = \sum_{x \in \bbR}
N\big(\Psi_+(\la_0,x,x_0)^* J \Psi_-(\la_1,x,x_0)\big).    \lb{4.5}
\end{equation}
\end{theorem}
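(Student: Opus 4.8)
The plan is to reduce the full-line problem to two half-line problems by splitting at an arbitrary point $a \in \bbR$ and applying Theorem \ref{t3.10} on each half-line, then comparing the spectral counting functions of $T$, $T_a$, and the corresponding left half-line operator via the restriction identity in Lemma \ref{l4.1}. First I would fix $a \in \bbR$ with $a \neq c$ for all the (discretely many) points $c$ where the Wronskian $\Psi_+(\la_0,c,x_0)^* J \Psi_-(\la_1,c,x_0)$ is singular; this is possible since those points have no finite accumulation point in $\bbR$ (the Wronskian is a real-analytic $m\times m$ matrix function of $x$ whose determinant is not identically zero, as it is invertible at $x=x_0$ by nondegeneracy of the Weyl--Titchmarsh solutions). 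With the boundary matrix $\al = \al(\la_1,a,x_0)$ chosen as in \eqref{4.1}, the solution $\Psi_-(\la_1,\dott,x_0)$ satisfies the $\al$-boundary condition at $x=a$, hence $\Psi_-(\la_1,\dott,x_0)$ plays the role of $\Psi_{-,\al}(\la_1,\dott,x_0)$ for the half-line operator $T_a$ on $[a,\infty)$; symmetrically, on the left half-line $(-\infty,a]$ one introduces the operator $T^a$ with the same boundary matrix $\al$ at $x=a$, for which $\Psi_-(\la_1,\dott,x_0)$ is the "right" Weyl--Titchmarsh solution and $\Psi_+(\la_0,\dott,x_0)$ is the "left-endpoint-condition" solution. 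Both half-line operators inherit $(\la_0,\la_1) \cap \si_\ess = \emptyset$ and $\la_0,\la_1 \notin \si(T_a)$, $\la_0,\la_1 \notin \si(T^a)$ from the corresponding properties of $T$ via Lemma \ref{l4.1} (and its mirror image), so Theorem \ref{t3.10} applies to each:
\begin{align}
N((\la_0,\la_1); T_a) &= \sum_{x \in (a,\infty)} N\big(\Psi_+(\la_0,x,x_0)^* J \Psi_-(\la_1, x,x_0)\big), \lb{pp1} \\
N((\la_0,\la_1); T^a) &= \sum_{x \in (-\infty,a)} N\big(\Psi_+(\la_0,x,x_0)^* J \Psi_-(\la_1, x,x_0)\big). \lb{pp2}
\end{align}
Since $a$ is not one of the singular points, $N\big(\Psi_+(\la_0,a,x_0)^* J \Psi_-(\la_1,a,x_0)\big) = 0$, so adding \eqref{pp1} and \eqref{pp2} gives exactly the right-hand side of \eqref{4.5}.

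It remains to show the left-hand sides add up, that is, $N((\la_0,\la_1); T) = N((\la_0,\la_1); T_a) + N((\la_0,\la_1); T^a)$. The strategy here is to set $\mu = \la_1 - \la_0 > 0$, pass from $T$ to the restricted resolvent $R(\la_1)$ via Lemma \ref{l2.3} (so that the spectral count of $T$ in $(\la_0,\la_1)$ equals a count of negative eigenvalues of $R(\la_1)$ below $-\mu^{-1}$, i.e. $N((\la_0,\la_1);T) = \dim\ran P((-\infty,-\mu^{-1}); R(\la_1))$, using $\la_0,\la_1 \notin \si(T)$ so the endpoints are not eigenvalues), and similarly for $T_a$, $T^a$ using $R_a(\la_1)$, $R^a(\la_1)$. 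Then Lemma \ref{l4.1} together with its mirror version yields $R(\la_1) = R_a(\la_1) \oplus R^a(\la_1) + K$ where $K = R(\la_1) - P_a R(\la_1) P_a - (I-P_a) R(\la_1) (I-P_a)$ is the finite-rank "off-diagonal" part of the resolvent coming from the Green's function $K(\la_1,x,x')$ being a rank-$\le m$ product $\Psi_-(\la_1,x,x_0)\cW^{-1}\Psi_+(\dott)^*$ for $x>x'$ and the transpose for $x<x'$; restricted to the complementary half-lines this kernel has rank at most $2m$. A finite-rank perturbation changes the dimension of the negative spectral subspace below any fixed level $-\mu^{-1} < 0$ by at most $\rank(K)$, so a priori one only gets an inequality. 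To get equality one must exploit more structure; the cleaner route is the one already used in Section \ref{s3}: prove the $\le$ direction and the $\ge$ direction separately, mirroring Theorems \ref{t3.6} and \ref{t3.9}. For $\le$, one uses $P_a \to$ (the identity minus its mirror) strong-operator convergence and the monotonicity/continuity of eigenvalues from Lemma \ref{l3.5} applied to shrinking intervals $[-n,a]$ and $[a,n]$, together with \cite[Lemma~5.2]{GST96a}, to bound $\dim\ran P((-\infty,-\mu^{-1}); R(\la_1))$ by the sum over $\bbR$ of the nullities. For $\ge$, one constructs an explicit subspace: at each singular point $c$ one glues $\Psi_-(\la_1,\dott,x_0) v_j^-$ on one side to $\Psi_+(\la_0,\dott,x_0) v_j^+$ on the other exactly as in Lemmas \ref{l3.7} and \ref{l3.8} (the gluing works because the Wronskian vanishing is precisely the continuity-at-$c$ condition), obtaining mutually "Wronskian-orthogonal" functions $u_{c,j}^\pm$, and the min-max principle applied to $R(\la_1)$ on $\ol{\linspan\{u_{c,j}^-\}}$ gives the lower bound $\sum_{x\in\bbR} N(\cdots)$.

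The main obstacle, and the step I expect to require the most care, is handling the left half-line consistently: the paper's Section \ref{s3} machinery (Hypothesis \ref{h3.1}, Theorem \ref{t3.10}, Lemmas \ref{l3.5}--\ref{l3.8}) is all written for $[a,\infty)$ with a left-endpoint $\al$-condition and a Weyl solution $\Psi_+$ at $+\infty$, so one must either invoke the "mirror-image" version of each of those statements (reflecting $x \mapsto -x$, which conjugates $J$ by the reflection and swaps the roles of $\Psi_+$ and $\Psi_-$, as anticipated in Remark \ref{r3.11}) or redo the two key lemmas directly on $(-\infty,a]$. The bookkeeping of which Pr\"ufer angle / boundary matrix goes with which endpoint, and checking that the sign conventions in the Wronskian $\Psi_+(\la_0,x,x_0)^* J \Psi_-(\la_1,x,x_0)$ are preserved under reflection, is where an error could creep in; everything else is a direct transcription of the half-line argument, with the single new ingredient being that the full-line resolvent splits as a block-diagonal sum of the two half-line resolvents plus a finite-rank kernel, which is exactly what Lemma \ref{l4.1} (and its mirror) supply, and which is why the $\le$ and $\ge$ estimates can be run verbatim. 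Finally, one notes that the choice of splitting point $a$ is immaterial: the right-hand side of \eqref{4.5} obviously does not depend on $a$, so any admissible $a$ yields the same identity.
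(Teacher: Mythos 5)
Your proposal assembles the right ingredients but follows a genuinely different, and in places incomplete, route. The paper does \emph{not} split $\bbR$ at a fixed point into two half-lines: it fixes the boundary matrix $\al=\al(\la_1,a,x_0)$ of \eqref{4.1} so that $\Psi_-(\la_1,\dott,x_0)$ is the left Weyl--Titchmarsh solution of \emph{every} half-line operator $T_a$, proves the sandwich $N((\la_0,\la_1);T_a)\leq N((\la_0,\la_1);T)\leq \liminf_{a\downarrow-\infty}N((\la_0,\la_1);T_a)$ (the first inequality from $P_aR(\la_1)P_a=R_a(\la_1)\oplus 0$ and the min--max principle, the second from strong resolvent convergence as $a\downarrow-\infty$), concludes $N((\la_0,\la_1);T)=\lim_{a\downarrow-\infty}N((\la_0,\la_1);T_a)$, and then simply quotes Theorem \ref{t3.10} for each $T_a$; the sums $\sum_{x\in(a,\infty)}N(\cdots)$ increase to $\sum_{x\in\bbR}N(\cdots)$. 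This one-sided exhaustion buys three things your route has to pay for: no mirror-image version of Section \ref{s3} is needed, no additivity statement $N(T)=N(T_a)+N(T^a)$ has to be proved, and no care about whether the splitting point hits a zero of $\det$ of the Wronskian is required.

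Two concrete problems with your version. First, the additivity $N((\la_0,\la_1);T)=N((\la_0,\la_1);T_a)+N((\la_0,\la_1);T^a)$, which your first strategy rests on, is exactly as hard as the theorem itself; you correctly observe that the finite-rank off-diagonal correction to $R(\la_1)$ only yields a two-sided bound by $\rank(K)$, and you then abandon this route --- but the replacement (a direct full-line redo of Theorems \ref{t3.6} and \ref{t3.9} with a two-parameter exhaustion $[-n,a]$, $[a,n]$) is left at the level of a sketch precisely where it is delicate, namely in tracking eigenvalue crossings of $P_{[-n,c]}R(\la_1)P_{[-n,c]}$ as two endpoints vary; carried out carefully it collapses to the paper's one-parameter argument anyway. (Your $\geq$ half, gluing $\Psi_-(\la_1,\dott,x_0)v_j^-$ to $\Psi_+(\la_0,\dott,x_0)v_j^+$ at each singular point and using Wronskian orthogonality plus min--max on $R(\la_1)$, is sound and is the legitimate full-line analogue of Lemmas \ref{l3.7}, \ref{l3.8} and Theorem \ref{t3.9}.) Second, the claim that the Wronskian is a real-analytic function of $x$ with discrete singular set is false under Hypothesis \ref{h2.1}: $A$ and $B$ are only locally integrable, so $\Psi_\pm$ are merely locally absolutely continuous, and the finiteness of the set of points where $\Psi_+(\la_0,x,x_0)^*J\Psi_-(\la_1,x,x_0)$ is singular is a \emph{consequence} of $(\la_0,\la_1)\cap\si_\ess(T)=\emptyset$ via the lower bound, not an a priori regularity fact; since you invoke it to choose the splitting point $a$ before the lower bound is available, the argument as ordered is circular at that step.
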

\begin{proof}
Let $\mu=\la_1-\la_0$. Since by assumption $\si_\ess(T)\cap[\la_0,\la_1]=\emptyset$, the spectrum of $T$ in $(\la_0,\la_1)$ consists of at most finitely many discrete eigenvalues, hence by Lemma~\ref{l2.3},
\begin{align}
N((\la_0,\la_1); T) = \dim\bigl(\ran\bigl(P\big((-\infty, -\mu^{-1}); R(\la_1)\big)\big)\big) < \infty.    \lb{4.6}
\end{align}
Employing \eqref{4.4} and the min-max principle as in the proof of Lemma~\ref{l3.5}, one obtains for every $a\in\bbR$,
\begin{align}
\dim\bigl(\ran\bigl(P\big((-\infty,-\mu^{-1}); R_a(\la_1)\big)\big)\big) \leq
\dim\bigl(\ran\bigl(P\big((-\infty,-\mu^{-1}); R(\la_1)\big)\big)\big) < \infty. \lb{4.7}
\end{align}
Thus, $R_a(\la_1)$ has no essential spectrum in $(-\infty,-\mu^{-1})$ and hence by  Lemma~\ref{l2.3} and \eqref{4.6}, \eqref{4.7}, one has
\begin{align}
N((\la_0,\la_1); T_a) = \dim\bigl(\ran\bigl(P\big((-\infty, -\mu^{-1}); R_a(\la_1)\big)\big)\big) \leq N((\la_0,\la_1); T).  \lb{4.8}
\end{align}

Since $P_a$ strongly converges to $I_{L^2_W(\bbR)^r}$ in $L^2_W(\bbR)^r$ as $a \downarrow - \infty$, $P_aR(\la_1)P_a$ strongly converges to $R(\la_1)$ in $L^2_W(\bbR)^r$ as $a \downarrow - \infty$. Then, as in Theorem~\ref{t3.6}, one obtains using \eqref{4.4},
\begin{align}
N((\la_0,\la_1); T) &= \dim\bigl(\ran\bigl(P\big((-\infty, -\mu^{-1}); R(\la_1)\big)\big)\big)   \no \\
& \leq \liminf_{a \downarrow -\infty} \,
\dim\bigl(\ran\bigl(P\big((-\infty,-\mu^{-1}); P_aR(\la_1)P_a\big)\big)\big) \no \\
& = \liminf_{a \downarrow -\infty} \,
\dim\bigl(\ran\bigl(P\big((-\infty,-\mu^{-1}); R_a(\la_1)\big)\big)\big).  \lb{4.9}
\end{align}
Combining \eqref{4.9} with \eqref{4.8} implies
\begin{align}
N((\la_0,\la_1); T) = \lim_{a \downarrow - \infty} \, N((\la_0,\la_1); T_a).
\end{align}
Applying Theorem \ref{t3.10} and noting that, by construction in \eqref{4.1}, $\Psi_-(\la_1,\dott,x_0)$ is the left Weyl--Titchmarsh solution for all $T_a$ then yields \eqref{4.5}.
\end{proof}

We emphasize again that the interval $(\la_0,\la_1)$ can lie in any essential spectral gap of $T$, not just below its essential spectrum as in standard approaches to oscillation theory. Again we note that to the best of our knowledge, even the special scalar case $m=1$ appears to be new for general Hamiltonian systems.

The analog of Remark \ref{r3.11} applies of course in the current full-line situation.

\smallskip

\noindent {\bf Acknowledgments.}
We are indebted to Selim Sukhtaiev for numerous discussions on this topic and
to the anonymous referee for a thorough reading of our manuscript and for his most
valuable comments. M.Z. gratefully acknowledges the kind invitation and hospitality
of the Mathematics Department of the University of Missouri during the spring semester 
of 2016, where much of this work was completed.


\end{document}